\newtheorem{theorem}[equation]{Theorem}
\newtheorem{proposition}[equation]{Proposition}
\newtheorem{lemma}[equation]{Lemma}
\newtheorem{corollary}[equation]{Corollary}
\newtheorem{claim}[equation]{Claim}
\theoremstyle{remark}
\newtheorem{remark}[equation]{Remark}
\theoremstyle{definition}
\newtheorem{definition}[equation]{Definition}
\numberwithin{equation}{section}
 \renewcommand{\qed}{\hspace*{\fill} \setlength{\unitlength}{1mm}
 \begin{picture}(2.5,2.5)
       \put(0,0){\framebox(2.5,2.5){}}
   \end{picture}
 \setlength{\unitlength}{1pt}}
\newcommand{\eps}{\varepsilon}
\newcommand{\vphi}{\varphi}
\newcommand{\me}{\mathrm{e}}
\newcommand{\side}{\operatorname{side}}
\newcommand{\Vol}{\operatorname{Vol}}
\newcommand{\supp}{\operatorname{Supp}}
\newcommand{\dist}{\operatorname{dist}}
\newcommand{\length}{\operatorname{length}}
\newcommand{\Av}{\mathrm{Av}}
\newcommand{\sgn}{\mathrm{sgn}}
\newcommand{\Nod}{\mathsf{Nod}}
\newcommand{\Dl}{\mathcal{D}}
\newcommand{\dt}{\;dt}
\newcommand{\dx}{\;dx}
\newcommand{\dy}{\;dy}
\newcommand{\natls}{\mathbb{N}}
\newcommand{\ratls}{\mathbb{Q}}
\newcommand{\bx}{{\bf x}}
\newcommand{\by}{{\bf y}}
\newcommand{\vol}{{\rm vol}}
\newcommand{\ar}{{\rm area}}
\newcommand{\bd}{{\partial}}
\newcommand{\CC}{\mathbb{C}}
\newcommand{\QQ}{\mathbb{Q}}
\newcommand{\RR}{\mathbb{R}}
\newcommand{\cN}{{\mathcal{N}}}
\newcommand{\cV}{{\mathcal{V}}}
\newcommand{\cF}{{\mathcal{F}}}
\newcommand{\cH}{{\mathcal{H}}}
\newcommand{\cA}{{\mathcal{A}}}
\newcommand{\cD}{{\mathcal{D}}}
\newcommand{\tld}{\tilde}
\begin{document}

\title[Tubular Neighborhoods of Nodal Sets]
{Tubular Neighborhoods of Nodal Sets and Diophantine Approximation}

\author[Dmitry Jakobson]{Dmitry Jakobson}
\address{Department of Mathematics and
Statistics, McGill University, 805 Sherbrooke Str. West,
Montr\'eal QC H3A 2K6, Ca\-na\-da.}
\email{jakobson@math.mcgill.ca}

\author[Dan Mangoubi]{Dan Mangoubi}
\address{IH\'ES, Le Bois-Marie, 35, Route de Chartres,
F-91440 Bures-sur-Yvette,
France}
\email{mangoubi@ihes.fr}

%\keywords{eigenfunction, nodal set, continued fractions}
%
%\subjclass[2000]{Primary: 58J50 Secondary: 11J70, 35P20, 81Q50}

\thanks{ The first author was supported by NSERC, NATEQ and Dawson
fellowship. The second author was supported by EPDI and CRM-ISM fellowship.}

\begin{abstract}
We give upper and lower bounds on the volume of a tubular
neighborhood of the nodal set of an eigenfunction of the Laplacian
on a real analytic closed Riemannian manifold $M$. As an
application we consider the question of approximating points on
$M$ by nodal sets, and explore analogy with approximation by
rational numbers.
\end{abstract}
\maketitle

%%%%

\section{Introduction and Main Results}
Let $(M, g)$ be a real analytic closed Riemannian manifold.
In the first part of this paper we give upper and lower bounds on the volume of
tubular neighborhoods of nodal sets.
Consider the eigenequation
$$\Delta \phi_\mu + \mu^2\phi_\mu =0, $$
where $\Delta$ is the Laplace--Beltrami operator on $M$.
We denote the nodal set $\{\phi_\mu=0\}$ by $\cN_\mu$.
 Consider the tubular neighborhood of the nodal set
\begin{equation}
 T_{\mu, \delta} = \{ x\in M\, :\, \dist(x, \cN_{\mu}) < \delta\}\ .
\end{equation}
We prove
\begin{theorem}
\label{thm:nodal-tube}
  Let $(M, g)$ be a real analytic closed Riemannian manifold.
  Then there exist $C_1, C_2, C_3>0$ such that
  $$C_1\mu\delta \leq \Vol(T_{\mu, \delta}) \leq C_2\mu\delta ,$$
  whenever $\mu\delta \leq C_3$.
\end{theorem}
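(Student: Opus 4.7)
The plan is to read Theorem~\ref{thm:nodal-tube} as a ``tube formula'' for the nodal set: since $\cN_\mu$ has codimension one, one morally expects $\Vol(T_{\mu,\delta}) \approx 2\delta \cdot \mathcal{H}^{n-1}(\cN_\mu)$ for $\delta$ small. A classical result of Donnelly and Fefferman for real analytic metrics gives two-sided bounds $C_1'\mu \leq \mathcal{H}^{n-1}(\cN_\mu) \leq C_2'\mu$, so the theorem will follow once one justifies the linearity of the tube volume in $\delta$. The hypothesis $\mu\delta \leq C_3$ is natural, as $1/\mu$ is the wavelength of $\phi_\mu$ and the scale below which the analytic/elliptic structure of $\phi_\mu$ can be effectively exploited.

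For the upper bound, I would cover $M$ by balls $B_j$ of radius $r_0/\mu$ with bounded overlap, so that there are at most $C\mu^n$ of them. In each $B_j$, the doubling inequality -- a consequence of the frequency bound of order $\mu$, which in turn follows from the holomorphic extension of $\phi_\mu$ to a complex neighborhood of width of order $1/\mu$ -- implies that, after rescaling, $\phi_\mu$ behaves like an analytic function of bounded complexity. Hence $\cN_\mu \cap B_j$ is a real-analytic variety with $(n-1)$-measure at most $C/\mu^{n-1}$, and a standard Minkowski-content estimate for such a variety gives $\Vol(T_{\mu,\delta} \cap B_j) \leq C\delta/\mu^{n-1}$. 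Summing over the balls yields $\Vol(T_{\mu,\delta}) \leq C_2 \mu \delta$.

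For the lower bound, I would invoke the classical inner-radius bound for nodal domains: every ball in $M$ of radius $R_0/\mu$ meets $\cN_\mu$. Extract a maximal family of disjoint balls $B(x_i, R_0/(2\mu))$ with $x_i \in \cN_\mu$; a packing argument gives at least $c\mu^n$ of them. Inside each $B(x_i, R_0/(2\mu))$ the same frequency/doubling machinery produces a sub-ball in which $\cN_\mu$ contains a smooth component separating the sub-ball into two parts of comparable volume; the $\delta$-neighborhood of that component then occupies volume at least $c\delta/\mu^{n-1}$, and summing over $i$ gives $\Vol(T_{\mu,\delta}) \geq C_1 \mu \delta$.

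The main technical obstacle is the quantitative local geometry of $\cN_\mu$ at scale $1/\mu$: ensuring that the tube is non-degenerate, i.e.\ that there is no pathological folding or near self-intersection at scale $\delta$, and that the local pieces of $\cN_\mu$ contribute the expected Minkowski content on both sides. This is where real analyticity enters decisively, via the frequency bound, which lets one approximate $\phi_\mu$ locally by a polynomial of bounded degree and thereby classify $\cN_\mu$ up to bounded complexity in each ball of radius comparable to $1/\mu$.
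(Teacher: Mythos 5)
Your proposal captures the correct high-level shape of the result -- wavelength-scale localization, use of the holomorphic extension and growth/doubling bound, Donnelly--Fefferman's two-sided estimate on $\Vol_{n-1}(\cN_\mu)$ -- but each of the two key steps, as you state them, has a genuine gap, and neither coincides with the route the paper actually takes.

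For the upper bound, your ``standard Minkowski-content estimate'' for $\cN_\mu \cap B_j$ uniformly over all $\delta$ down to $0$ is precisely the nontrivial point. Rectifiability gives the Minkowski content only as a limit, not as a uniform tube-volume bound for every $\delta \leq C/\mu$ with constants independent of $\mu$; to get that you would need to reduce $\phi_\mu$ to a polynomial of bounded degree and invoke a Vitushkin--Yomdin-type quantitative estimate for algebraic sets, then control the error in passing from polynomial to analytic. That is the content of Yomdin's paper, which the authors explicitly cite as \emph{a different proof of the upper bound by different methods}. The paper's own argument is quite different: it subdivides a preferred cube into boxes of side $\sim\delta$ (not $\sim 1/\mu$), takes the upper bound $\Vol_{n-1}(\cN_\mu)\leq C\mu$ as a black box, and then counts ``nodal'' boxes. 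The count is split into $E$-bad boxes (few, by the measure bound $|E|\leq C\eps\mu\delta$ coming from the small-scale $L^2$-comparison of $\phi_\mu$ with its local averages) and $E$-good nodal boxes, for which a Brunn--Minkowski argument shows each contributes $\gtrsim\delta^{n-1}$ to $\Vol_{n-1}(\cN_\mu)$. The machinery behind the good/bad decomposition is the extension of Donnelly--Fefferman's analysis from the scale $\delta\asymp 1/\mu$ to all scales $\delta\leq C/\mu$ (Sections 2--6 of the paper); this is not the wavelength-scale ``frequency/doubling'' input you allude to, and it is where most of the paper's work goes.

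For the lower bound, your outline is closer to the paper's, but it has two gaps. First, ``frequency/doubling machinery'' by itself does not yield a sub-ball in which the positivity set $B^+$ and negativity set $B^-$ have comparable volume. The paper needs a local asymmetry estimate (Mangoubi's bound $\Vol(B^+)/\Vol(B)\gtrsim 1/(\log A)^{n-1}$ for solutions with controlled $L^\infty$-doubling, Proposition~\ref{prop:geometry-control}), whose hypothesis is in turn verified on ``good'' boxes only via the small-scale $L^2$-comparison just mentioned; and this only holds on a definite \emph{proportion} of the balls, which is why the paper proves Proposition~\ref{prop:many-good-balls} (iii) and (iv) rather than a statement about every ball. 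Second, the step from ``comparable volumes'' to a tube-volume lower bound is not via a smooth separating hypersurface: it is the Brunn--Minkowski inequality applied to $(B^+)_\delta$ and $(B^-)_\delta$, using that $T_{\mu,\delta}\cap 2B\supseteq (B^+)_\delta\cap(B^-)_\delta$; this requires no regularity of $\cN_\mu$ and is both simpler and more robust than the argument you gesture at. As written, your proposal would need to supply the asymmetry bound, the small-scale $L^2$-comparison underlying it, and the Brunn--Minkowski step to become a proof.
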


To put Theorem~\ref{thm:nodal-tube} in the right
context we recall
\begin{theorem}[{\cite[Theorem~1.2]{DF}}]
\label{thm:nodal-volume} Let $(M, g)$ be a closed real analytic
Riemannian manifold. Then, there exist $C_4,C_5>0$ such that
  $C_4\mu\leq \Vol_{n-1} (\cN_\mu) \leq C_5\mu,$
  where $\Vol_{n-1}$ is the $(n-1)$-dimensional Hausdorff measure on $M$.
\end{theorem}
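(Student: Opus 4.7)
The plan is to reduce both bounds to local statements in balls of a fixed (scale-one) radius, using real analyticity to complexify the eigenfunction. The upper bound then follows from Jensen's formula plus a Crofton-type integral geometry inequality; the lower bound combines Faber--Krahn with a local isoperimetry/monotonicity argument at scale $1/\mu$.

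\emph{Step 1 (complexification and quantitative growth).} Since $g$ is real analytic, $(M,g)$ embeds in a Grauert tube $M_{\CC}^\tau$ to which $g$ extends holomorphically, and every eigenfunction $\phi_\mu$ extends to a holomorphic function $\wtld\phi_\mu$ on $M_{\CC}^\tau$. The key quantitative input, obtained via Carleman-type estimates for the complexified elliptic equation, is the growth bound
\[
 \sup_{M_{\CC}^\tau}|\wtld\phi_\mu|\le e^{C\mu}\|\phi_\mu\|_{L^2(M)}.
\]
Combined with a uniform lower bound on local sup norms (which follows from propagation of smallness in the same Carleman setup), this yields the Donnelly--Fefferman doubling property: for every geodesic ball $B$ of fixed radius $r_0$,
\[
 \frac{\sup_{2B}|\phi_\mu|}{\sup_B|\phi_\mu|}\le e^{C\mu}.
\]

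\emph{Step 2 (upper bound).} Applying Jensen's formula to $\wtld\phi_\mu$ restricted to any complex disc of radius $r_0$ centered at a point of $M$ shows that the number of zeros of $\phi_\mu$ on any real geodesic segment of length $\le r_0/2$ is $O(\mu)$. The classical Crofton formula
\[
 \Vol_{n-1}(\cN_\mu\cap B)\le C\int\#\bigl(\cN_\mu\cap\ell\bigr)\,d\ell,
\]
integrated over geodesic segments $\ell$ meeting a fixed ball $B$ of radius $r_0/2$, then gives $\Vol_{n-1}(\cN_\mu\cap B)\le C\mu$, and summing over a finite cover yields $\Vol_{n-1}(\cN_\mu)\le C_5\mu$.

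\emph{Step 3 (lower bound).} First, Faber--Krahn: since every nodal domain $D$ of $\phi_\mu$ is a Dirichlet ground-state domain with eigenvalue $\mu^2$, domain monotonicity forces $\mathrm{inradius}(D)\le c/\mu$. Hence $\cN_\mu$ is $(c/\mu)$-dense in $M$, and one may cover $M$ by $\sim\mu^n$ balls $B_i$ of radius $\sim 1/\mu$ with bounded overlap, each meeting $\cN_\mu$. In each $B_i$, the rescaled version of Step~1 together with a relative isoperimetric inequality applied to $\{\phi_\mu>0\}\cap B_i$ and $\{\phi_\mu<0\}\cap B_i$ forces
\[
 \Vol_{n-1}(\cN_\mu\cap B_i)\ge c'/\mu^{n-1},
\]
and summation yields $\Vol_{n-1}(\cN_\mu)\ge C_4\mu$.

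The main obstacle is the growth estimate in Step~1: obtaining the sharp $e^{C\mu}$ bound on the holomorphic extension is the heart of the Donnelly--Fefferman argument and is where real analyticity is indispensable. Once it is available, the upper bound is a fairly standard Jensen--Crofton combination, while the lower bound reduces to Faber--Krahn plus a local isoperimetric analysis. A secondary subtlety is verifying that the small balls $B_i$ in Step~3 contain enough of both signs of $\phi_\mu$ for the relative isoperimetric inequality to bite; this is handled by combining the inradius bound with the local doubling estimate at the scale $1/\mu$.
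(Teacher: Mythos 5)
The paper does not prove this statement at all: it is quoted verbatim as Theorem~1.2 of \cite{DF} and used as a black box (its upper bound enters the proof of the upper bound in Theorem~\ref{thm:nodal-tube}, in Lemma~\ref{lem:num-good-nodal}). So the only meaningful comparison is with the Donnelly--Fefferman argument itself. Your Steps~1--2 do reproduce its skeleton for the upper bound: complexification, the $e^{C\mu}$ growth/doubling estimate (which you rightly flag as the heart of the matter), Jensen's formula on complex discs to count zeros on real segments, and a Crofton-type integration. That outline is sound, modulo the usual normalization issue ($|F(0)|$ must be comparable to the local sup, arranged by recentering at a near-maximum point).

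Your Step~3, however, has a genuine gap. It is false that \emph{every} ball $B_i$ of radius $\sim 1/\mu$ meeting $\cN_\mu$ satisfies $\Vol_{n-1}(\cN_\mu\cap B_i)\geq c/\mu^{n-1}$: the nodal set may merely clip a corner of $B_i$, and the relative isoperimetric inequality gives $\Vol_{n-1}(\cN_\mu\cap B_i)\gtrsim \min\bigl(\Vol(B_i^+),\Vol(B_i^-)\bigr)^{(n-1)/n}$, which is worthless unless \emph{both} sign sets occupy a definite fraction of $B_i$. Your proposed fix --- ``the inradius bound plus the local doubling estimate at scale $1/\mu$'' --- does not close this: the only doubling constant available a priori on a given wavelength ball is $e^{C\mu}$, and feeding that into the local asymmetry bound (cf.\ Proposition~\ref{prop:geometry-control} and the quoted Theorem~4.7 of \cite{Man-LAIRND}) yields a minority-sign volume fraction only of order $\mu^{-(n-1)}$, hence $\Vol_{n-1}(\cN_\mu\cap B_i)\gtrsim \mu^{-(n-1)}\cdot\mu^{-(n-1)}$ after isoperimetry --- too weak by a large power of $\mu$. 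What Donnelly--Fefferman actually prove, and what this paper restates as Proposition~\ref{prop:many-good-balls}, is that a \emph{positive proportion} of $M$ is covered by disjoint wavelength balls centered at zeros on which the local $L^2$ growth is bounded by an absolute constant independent of $\mu$, so that $\Vol(B_i^+)\asymp\Vol(B_i^-)\asymp\Vol(B_i)$ there; establishing the existence of this good subfamily is the whole content of the small-scale comparison machinery (Section~5 of \cite{DF}, Sections~\ref{sec:hol-dim-1}--\ref{sec:good-bad} here) and cannot be replaced by the global doubling bound. Summing the isoperimetric estimate over only these good balls, of which there are still $\gtrsim\mu^n$, then gives $\Vol_{n-1}(\cN_\mu)\geq C_4\mu$.
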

%One can regard this result as a generalization of the fact that
%the $k$-th eigenfunction of a Sturm-Liouville problem has $(k-1)$
%zeros. 

From this perspective, we see that
Theorem~\ref{thm:nodal-tube} describes a regularity property of
the nodal set. For example, the upper bound implies that the nodal
set does not have too many needles or very narrow branches, while
the lower bound says that the nodal set doesn't curve too much.

For the proof of Theorem~\ref{thm:nodal-tube}
we need to study the behavior of eigenfunctions in
all scales $0<\delta\leq 1/\mu$ ($1/\mu$ is called the wavelength).
Roughly, we show that for most points $x$, $\phi_\mu(x)$
is comparable to the average of $\phi_\mu$ on a ball 
of radius~$\delta$ centered at $x$.
This study is the content of
Sections~\ref{sec:hol-dim-1}-\ref{sec:refinements},
%,~\ref{sec:hol-dim-n}
%and~\ref{sec:eigen-small-scale}
and it extends
the work of Donnelly and Fefferman
in Section~5 of~\cite{DF}, where they consider 
scales \emph{comparable} to the wavelength 
$C_1/\mu\leq \delta\leq C_2/\mu$.

Donnelly and Fefferman showed that by considering
an analytic continuation of $\phi_\mu$, one can treat our problem
by studying polynomials in dimension one, and then applying an induction
argument. We adopt this approach here.
The key proposition is Proposition~\ref{prop:eigen-small-scale-v2}. 
Most of its proof
goes without change from the proof of Proposition~5.11
in~\cite{DF}. We had to adjust the arguments from~\cite{DF} in two
main points. The first is the proof in dimension one,
where we added the parameter $\delta$ to the proofs in~\cite{DF},
and showed that everything goes through.
The second is in
the proof of Proposition~\ref{prop:control-quotient} where the
change of variables argument is different and more subtle than the parallel
argument in~\cite{DF}.

The proof of the lower bound in Theorem~\ref{thm:nodal-tube} is
given in Section~\ref{sec:lower}. It is based on the behavior of
eigenfunctions in scales \emph{comparable} to the wavelength and 
on the Brunn-Minkowski inequality. 

The idea of the proof of the upper bound in Theorem~\ref{thm:nodal-tube} 
was suggested to the authors by C.~Fefferman. We give the proof
in Section~\ref{sec:upper}. A proof of the upper
bound by different methods can be found in~\cite{yomdin}.
Our proof is based on the upper bound in Theorem~\ref{thm:nodal-volume}
and our study of eigenfunctions in all scales $\delta\leq C/\mu$.

% In this paper we generalize their study to
%\emph{arbitrarily small} scales with respect to the wavelength. To
%that end, we follow closely the guidelines in Section~5
%of~\cite{DF}. 

In Section~\ref{sec:dim2} we consider the special case where
$\dim(M)=2$. We show that the lower bound is true for any
\emph{smooth} surface and the upper bound is true for any smooth
surface which satisfies Yau's conjecture.

 In the second
part of the paper we make an attempt to look simultaneously on the
ensemble of nodal sets which belong to different eigenvalues.
Consider first a simple example: Eigenfunctions on $M=[0,\pi]$
with the standard metric and (say) with Dirichlet boundary
conditions.  Then
$$
\mu_k=k,\ \phi_k(x)=\sin(kx),\ \cN_k=\left\{\frac{\pi j}{k}:0\leq j\leq k
\right\}.
$$
Accordingly, the set $\cN_k$ is $\pi/(2k)$-dense in $M$.
Interestingly, a similar result holds on any smooth Riemannian manifold
(see e.g. \cite{Bru}):
\begin{proposition}\label{prop:dense}
There exists $C>0$ (which depends only on $M,g$) such that
$$B(x,C/\mu)\cap\cN(\phi_\mu)\neq\emptyset$$
for any $x\in M$ and $\mu>0$.
\end{proposition}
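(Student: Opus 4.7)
The plan is to argue by contradiction using the domain monotonicity of Dirichlet eigenvalues, reducing the question to a standard Faber--Krahn-type estimate on small geodesic balls.

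Suppose the conclusion fails. Then for any fixed constant $C>0$ we can find $x\in M$ and $\mu>0$ so that $\phi_\mu$ has no zero in $B(x,C/\mu)$. Since $\phi_\mu$ is continuous, it has constant sign, say positive, on this ball. Let $\Omega$ denote the nodal domain of $\phi_\mu$ containing $B(x,C/\mu)$, i.e.\ the connected component of $\{\phi_\mu>0\}$ that contains $x$. Then $\phi_\mu|_\Omega$ is a positive Dirichlet eigenfunction on $\Omega$ with eigenvalue $\mu^2$, so it is the ground state and $\lambda_1^D(\Omega)=\mu^2$. By domain monotonicity for the Dirichlet Laplacian,
$$\lambda_1^D\bigl(B(x,C/\mu)\bigr) \le \lambda_1^D(\Omega) = \mu^2.$$

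The remaining step is a uniform lower bound of the form $\lambda_1^D(B(x,r)) \ge c/r^2$ for all $x\in M$ and all sufficiently small $r>0$, where $c>0$ depends only on $(M,g)$. For $r$ smaller than the injectivity radius of $M$, one passes to geodesic normal coordinates centered at $x$; compactness of $M$ yields a constant $\Lambda>1$, independent of $x$, such that $\Lambda^{-1} g_{\mathrm{eucl}} \le g \le \Lambda g_{\mathrm{eucl}}$ on $B(x,r)$. Inserting this into the Rayleigh quotient shows that $\lambda_1^D(B(x,r))$ is, up to a multiplicative constant, bounded below by the first Dirichlet eigenvalue of the Euclidean ball of radius $r$, which is a positive constant over $r^2$. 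Combining this with the previous inequality gives $c\mu^2/C^2 \le \mu^2$, forcing $C \le \sqrt{c}$ and contradicting the fact that $C$ was arbitrary.

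Thus it suffices to choose $C=\sqrt{c}/2$ (say), and the proof is complete. The only mildly delicate point is ensuring that the constant in the Faber--Krahn-type lower bound on small geodesic balls is uniform in $x\in M$; this follows from compactness of $M$ and the uniform comparability of $g$ with the Euclidean metric in normal coordinates on balls of radius less than $\inj(M)$. No real analyticity hypothesis is used, which matches the statement of the proposition for general smooth $(M,g)$.
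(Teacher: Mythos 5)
The paper does not prove this proposition; it is quoted from Br\"uning~\cite{Bru}, and your chosen strategy---domain monotonicity of the first Dirichlet eigenvalue plus a scale-invariant eigenvalue estimate on small geodesic balls---is indeed the standard one. But as written the argument does not hold together, because your inequalities point the wrong way. Domain monotonicity for Dirichlet eigenvalues says that a \emph{smaller} domain has a \emph{larger} first eigenvalue (extension by zero embeds $H_0^1$ of the subdomain into $H_0^1$ of the larger domain, so the Rayleigh infimum is taken over a smaller class); from $B(x,C/\mu)\subseteq\Omega$ you should therefore get $\lambda_1^D(B(x,C/\mu))\geq\lambda_1^D(\Omega)=\mu^2$, not $\leq$. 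Correspondingly, the ball estimate you actually need is an \emph{upper} bound $\lambda_1^D(B(x,r))\leq C'/r^2$, obtained by inserting a fixed bump function supported in $B(x,r)$ into the Rayleigh quotient; the Faber--Krahn-type \emph{lower} bound $\lambda_1^D(B(x,r))\geq c/r^2$ that you invoke is true but points the same way as the (correct) domain-monotonicity inequality and so cannot produce a contradiction. Even accepting your two reversed inequalities at face value, the algebra is off: $c\mu^2/C^2\leq\mu^2$ forces $C\geq\sqrt{c}$, not $C\leq\sqrt{c}$, which is consistent with ``$C$ arbitrary'' and yields no contradiction, and your final choice $C=\sqrt{c}/2$ lies on the wrong side of the threshold even by your own (erroneous) logic.

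With the directions corrected the argument is sound and short: if $\phi_\mu$ has no zero on $B(x,C/\mu)$ with $C/\mu$ small, then $\mu^2=\lambda_1^D(\Omega)\leq\lambda_1^D(B(x,C/\mu))\leq C'\mu^2/C^2$, so $C\leq\sqrt{C'}$, a contradiction once $C>\sqrt{C'}$. You should also dispose of the regime where $C/\mu$ is not small enough for the normal-coordinate comparison: if $C/\mu\geq\inj(M)/2$, applying the same comparison to $B(x,\inj(M)/2)$ bounds $\mu$ by a constant, so after enlarging $C$ once more the ball $B(x,C/\mu)$ is all of $M$ and trivially meets the nonempty nodal set. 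Your observation that compactness of $M$ gives a metric comparison with the Euclidean metric in normal coordinates, uniform in $x$ and in $r<\inj(M)$, is correct and is precisely what makes the constant $C'$ independent of the center.
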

Here $B(x,r)$ denotes the ball of radius $r$ centered at $x\in M$.
Thus $\cN_\mu$ is $C/\mu$-dense in $M$.

To study the rate of approximation by $\cN_\mu$ as $\mu\to\infty$ in
more detail, consider again the case of $M=[0,\pi]$ where
approximating by points in $\cN_k$ is equivalent (after rescaling by
$\pi$) to approximating by rationals with denominator $k$. It is
well-known (see e.g.~\cite{K}) that the distance
from any $x\in[0,1]$ to the $m$-th convergent of its continued
fraction expansion $p_m/q_m$ is $O(1/q_m^2)$.  However, the {\em
denominator} $q_m$ of the $m$-th continued fraction grows
exponentially in $m$ for $x\notin\QQ$ (\cite{K}).

Denote by $||x||$ the distance from $x\in\RR$ to the nearest integer.
The following proposition can be found in~\cite{K} and is proved
by an application of the Borel-Cantelli Lemma.
\begin{proposition}\label{khinchin}
If $\sum_q\psi(q)$ converges, then for Lebesgue-almost all $x$,
there exist only finitely many $q$ such that $||qx||<\psi(q)$.
\end{proposition}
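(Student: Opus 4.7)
The plan is a direct application of the Borel--Cantelli lemma to the sets
\[
E_q \;=\; \{\, x \in [0,1] : \|qx\| < \psi(q) \,\}.
\]
Since $\|\cdot\|$ is $1$-periodic it suffices to work on the unit interval.

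First I would estimate the Lebesgue measure of $E_q$. The condition $\|qx\| < \psi(q)$ means $qx$ lies within distance $\psi(q)$ of some integer, so writing $qx = k + r$ with $k \in \ZZ$ and $|r| < \psi(q)$, the set $E_q$ is a union of at most $q+1$ intervals of the form $\bigl(k/q - \psi(q)/q,\; k/q + \psi(q)/q\bigr)$ intersected with $[0,1]$, for $k = 0, 1, \dots, q$. Therefore
\[
|E_q| \;\leq\; (q+1) \cdot \frac{2\psi(q)}{q} \;\leq\; 4\,\psi(q).
\]

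Summing and using the hypothesis,
\[
\sum_{q \geq 1} |E_q| \;\leq\; 4 \sum_{q \geq 1} \psi(q) \;<\; \infty.
\]
The Borel--Cantelli lemma then gives
\[
\bigl|\,\limsup_{q\to\infty} E_q\,\bigr| \;=\; \Bigl|\bigcap_{N\geq 1} \bigcup_{q\geq N} E_q\Bigr| \;=\; 0.
\]
The complement of this null set consists precisely of those $x$ that lie in only finitely many of the $E_q$, i.e.\ those $x$ for which $\|qx\| < \psi(q)$ holds for only finitely many $q$, which is the desired conclusion.

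There is no real obstacle: the only point that requires a moment's attention is the measure estimate on $E_q$, where one must be careful to count the endpoint intervals (giving the extra factor $(q+1)/q$) and to note that the intervals are disjoint for $q$ large enough relative to $\psi(q)$, which is harmless since we only need an upper bound absorbable into the constant $4$.
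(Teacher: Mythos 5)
Your proof is correct and is exactly the argument the paper alludes to: the paper gives no details, simply citing Khinchin and stating that the result follows from the Borel--Cantelli Lemma. Your measure estimate $|E_q|\leq 4\psi(q)$ and the application of Borel--Cantelli fill in precisely that sketch.
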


Taking $\psi(q)=C/q^{1+\eps}$ in Proposition \ref{khinchin}
 we conclude that
\begin{corollary}\label{ratls}
Given $C,\eps>0$, for Lebesgue-almost all $x\in[0,1]$ the inequality
$$
|x-p/q|<C/q^{2+\eps}
$$
has finitely many integer solutions $(p,q)$.
\end{corollary}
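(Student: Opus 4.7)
The plan is to reduce the two-variable statement about $(p,q)$ to the one-variable statement of Proposition~\ref{khinchin}. The key observation is that the inequality $|x - p/q| < C/q^{2+\eps}$ is equivalent to $|qx - p| < C/q^{1+\eps}$, and if such an integer $p$ exists then in particular the distance from $qx$ to the nearest integer satisfies $\|qx\| < C/q^{1+\eps}$. So if we set
\begin{equation*}
  \psi(q) = C/q^{1+\eps},
\end{equation*}
then the set of $q$ for which there exists a $p$ with $|x - p/q| < C/q^{2+\eps}$ is contained in the set of $q$ with $\|qx\| < \psi(q)$.

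Next I would verify the hypothesis of Proposition~\ref{khinchin}: since $1+\eps > 1$, the series $\sum_{q\geq 1} C/q^{1+\eps}$ converges. Proposition~\ref{khinchin} then gives, for almost every $x \in [0,1]$, a finite set $Q(x)$ of integers $q$ such that $\|qx\| < \psi(q)$; for all $q \notin Q(x)$, no integer $p$ can satisfy the required inequality.

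Finally I would handle the $p$'s. For each fixed $q \in Q(x)$, the condition $|x - p/q| < C/q^{2+\eps}$ forces $p$ to lie in an interval of length $2Cq/q^{2+\eps} = 2C/q^{1+\eps}$ around $qx$, so there are at most $\lfloor 2C/q^{1+\eps}\rfloor + 1$ admissible integers $p$. Summing the (finite) counts over the finite set $Q(x)$ yields only finitely many total solutions $(p,q)$, which is what we wanted. There is no real obstacle here; the statement is essentially a repackaging of Proposition~\ref{khinchin} with $\psi(q) = C/q^{1+\eps}$, and the only thing to check carefully is that passing from the one-variable count of $q$ to the two-variable count of pairs $(p,q)$ costs only a finite multiplicative factor per $q$.
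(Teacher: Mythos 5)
Your proof is correct and matches the paper's: the paper also derives the corollary by applying Proposition~\ref{khinchin} with $\psi(q)=C/q^{1+\eps}$, observing that $\sum_q C/q^{1+\eps}$ converges. You have simply filled in the routine reduction from pairs $(p,q)$ to denominators $q$ (finitely many admissible $p$ for each $q$), which the paper leaves implicit.
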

Equivalently, almost all $x\in M=[0,\pi]$ {\em cannot} be approximated by
points in $\cN_k$ to within $C/k^{2+\eps}$ infinitely often.
We prove an analogous statement for any real analytic manifold $M$.

To characterize the {\em rate} of approximation by nodal sets,
we make the following definition:
\begin{definition}\label{def:rate}
Given $b>0$ {\em (exponent)}, and $C>0$ {\em (constant)}, let
$M(b,C)$ be the set of all $x\in M$ such that there exists an
infinite sequence of eigenvalues $\mu_k\to\infty$ for which
$$
B\left(x,\frac{C}{\mu_k^b}\right)\cap\cN(\phi_{\mu_k})\neq\emptyset.
$$
\end{definition}
For example, Proposition \ref{prop:dense} implies that $M(1,C)=M$ for
some $C>0$.  Also, Corollary \ref{ratls} implies that for
$M=[0,\pi]$, we have $\Vol(M(2+\eps,C))=0\ \forall
C,\eps>0$.
We prove
\begin{theorem}
\label{thm:approx}
Let $(M, g)$ be a closed real analytic Riemannian manifold of dimension $n$.
Then for
any $C>0,\eps>0,$
$$
\Vol (M(n+1+\eps,C))=0.
$$
\end{theorem}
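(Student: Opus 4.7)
The plan is a standard Borel--Cantelli argument built on top of Theorem~\ref{thm:nodal-tube} and Weyl's law. Set $\delta_\mu := C/\mu^{n+1+\eps}$. The condition in Definition~\ref{def:rate} that $B(x,\delta_\mu)\cap\cN_\mu\neq\emptyset$ is exactly the condition $x\in T_{\mu,\delta_\mu}$. Therefore, after fixing some orthonormal basis $\{\phi_j\}$ of eigenfunctions with eigenvalues $\mu_j$ counted with multiplicity, the set $M(n+1+\eps,C)$ is contained in
$$
\limsup_{j\to\infty} T_{\mu_j,\delta_{\mu_j}} = \bigcap_{N\geq 1}\bigcup_{j\geq N} T_{\mu_j,\delta_{\mu_j}}.
$$
So it suffices to show this $\limsup$ has Lebesgue measure zero.

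First, I would apply the upper bound of Theorem~\ref{thm:nodal-tube}. Since $\mu_j\delta_{\mu_j}=C\mu_j^{-(n+\eps)}\to 0$, the smallness hypothesis $\mu_j\delta_{\mu_j}\leq C_3$ holds for all sufficiently large $j$, and for such $j$ we get
$$
\Vol(T_{\mu_j,\delta_{\mu_j}})\leq C_2\,\mu_j\delta_{\mu_j} = C_2 C\,\mu_j^{-(n+\eps)}.
$$
Next I would invoke Weyl's law, which gives $\#\{j:\mu_j\leq T\}\asymp T^n$, hence $\mu_j\gtrsim j^{1/n}$ for large $j$. Summing,
$$
\sum_{j\geq 1}\Vol(T_{\mu_j,\delta_{\mu_j}}) \;\lesssim\; \sum_{j\geq 1} j^{-(n+\eps)/n} \;=\; \sum_{j\geq 1} j^{-1-\eps/n} \;<\;\infty.
$$
By the Borel--Cantelli lemma (applied to Lebesgue measure on $M$), the set $\limsup_j T_{\mu_j,\delta_{\mu_j}}$ has measure zero, which yields $\Vol(M(n+1+\eps,C))=0$.

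There is no substantial obstacle once Theorem~\ref{thm:nodal-tube} is in place; the three ingredients (the tube estimate, Weyl's law, Borel--Cantelli) combine directly. The only conceptual point worth flagging is that when an eigenvalue has multiplicity, Definition~\ref{def:rate} implicitly presupposes a chosen $\phi_\mu$ in each eigenspace, and the argument above covers any such choice by fixing an eigenbasis at the outset. It is also worth noting that the exponent $n+1+\eps$ is precisely the threshold at which the weighted sum $\sum_j \mu_j^{1-b}$ converges under Weyl's law, so this exponent is the natural barrier of the method and mirrors the role of $2+\eps$ in the one-dimensional Diophantine analogue of Corollary~\ref{ratls}.
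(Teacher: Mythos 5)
Your proof is correct and follows essentially the same route as the paper: apply the upper bound in Theorem~\ref{thm:nodal-tube} with $\delta_k = C/\mu_k^{n+1+\eps}$, use Weyl's law to show the resulting series converges, and conclude by Borel--Cantelli. The extra care you take in verifying the smallness hypothesis $\mu_j\delta_{\mu_j}\to 0$ and in flagging the eigenbasis choice for repeated eigenvalues are sensible clarifications of the same argument.
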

The proof consists of Theorem~\ref{thm:nodal-tube},
the Borel--Cantelli Lemma and Weyl's asymptotics of eigenvalues.
%Weyl's law~(see~\cite{weyl}) in dimension $n$ can be formulated as follows:
%\begin{equation}\label{weyl}
%\mu_k\asymp C k^{1/n}.
%\end{equation}
\subsection{A Reader's Guide}
In Sections~\ref{sec:hol-dim-1}-\ref{sec:eigen-small-scale}
we study eigenfunctions in small scales.
The key proposition is Proposition~\ref{prop:eigen-small-scale-v2},
which roughly shows that for most points $x$, $\phi_\mu(x)$ is comparable
to the average of $\phi_\mu$ on a ball of radius $\delta$.
On a first reading one may assume this proposition.
In Section~\ref{sec:good-bad} we show how 
Proposition~\ref{prop:eigen-small-scale-v2} implies
geometric information on the nodal set and its neighborhood.
Section~\ref{sec:refinements} is a technical 
section which helps us treat the scales $\delta=C_1/\mu$ with $C_1$
large. The results of Sections~\ref{sec:eigen-small-scale},~\ref{sec:good-bad}
and~\ref{sec:refinements} are combined 
in Section~\ref{sec:lower} in order to prove the lower bound in
Theorem~\ref{thm:nodal-tube}. 
Section~\ref{sec:upper} gives the line of proof of the
upper bound in Theorem~\ref{thm:nodal-tube}. On a first reading
one may start with this section and move to
sections~\ref{sec:eigen-small-scale} and~\ref{sec:good-bad} when necessary.
%Section~\ref{sec:eigen-small-scale} is a study of eigenfunctions
%in small scales. It is based on the study of holomorphic functions
%in Section~\ref{sec:hol-dim-1} and Section~\ref{sec:hol-dim-n}.
In Section~\ref{sec:approx} we combine the upper bound in
Theorem~\ref{thm:nodal-tube} with Weyl's Law and the
Borel-Cantelli Lemma in order to establish
Theorem~\ref{thm:approx}. In Section~\ref{sec:dim2} we discuss
Theorem~\ref{thm:nodal-tube} for smooth surfaces. In
Section~\ref{sec:discussion} we discuss possible extensions of the
approximation result.

\subsection{Acknowledgments} The idea of the proof of 
Theorem~\ref{thm:nodal-tube} was 
suggested to the authors by C. Fefferman.

The authors would like to thank an anonymous referee who helped to
find a gap in an earlier version of this paper, and a second referee
due to him the paper is in a much more readable form. 
The authors would like
to thank M.~Sodin for his motivating question about the lower
bound on the volume of a nodal tube.
The authors would also like to thank E.~Bogomolny, Y.~Fyodorov, J.~Marklof, I.~Polterovich, Z.~Rudnick, U.~Smilansky, J.~Toth, I.~Wigman, Y.~Yomdin 
and S.~Zelditch
for stimulating discussions about this problem.

The first author would also like to thank the organizers of the
Workshop on Wavefunctions (Univ. of Bristol, September 2005), and
the organizers of the Workshop on Dynamical Systems and Related
Topics in honor of Y. Sinai (Univ. of Maryland, march 2006) for
their hospitality. A large part of this paper was written while the
first author visited Max Planck Institute for Mathematics in Bonn,
Germany and the IHES, France;  their hospitality is greatly
appreciated.

The second author is an EPDI fellow at the IHES, France;
Their hospitality and support is gratefully acknowledged.
The second author would also like to thank McGill
University and the CRM, Montreal where this work begun
for their hospitality during his stay in Montreal.
%%%%%%%%%%
%------------------------------------------------------------
\section{Holomorphic Functions in Small Scales - dimension 1}
\label{sec:hol-dim-1}
%------------------------------------------------------------
%The next three sections are a preparation for
%Section~\ref{sec:upper}. The reader may prefer to begin in
%Section~\ref{sec:upper} and come  back here when necessary. %%
In this section we describe the behavior
of holomorphic functions of one variable in small scales.
The proofs in this section follow closely the proofs in section~5 of~\cite{DF}.
%We will omit the proofs when they are identical to the proofs in~\cite{DF}.

We denote by $B_r \subset \CC$ the disk $|z|\leq r$. Suppose $F$ is
holomorphic on $B_3$ and satisfies the following growth assumption:
  \begin{equation}
    \label{growth}
    \sup_{B_2}|F|\leq |F(0)|\me^{C_1\mu}\ .
  \end{equation}

Let $I\subset\RR$ denote the interval $[-1, 1]$. 
Let $0<\delta<1/\mu$ be given.
We decompose $I$ into disjoint subintervals of sizes
$\delta<|I_\nu|<2\delta$. 
Given $x\in I$, we denote by $I_x$ the 
subinterval to which $x$ belongs ($I_x$ is
defined outside a set of measure $0$). 
We denote by $\Av_{I_{x}} F$ the average of $F$ on $I_{x}$.
The main proposition of this
section is
  \begin{proposition}
   \label{prop:wildintervals}
   Assume $F$ satisfies~(\ref{growth}).
   For all $\eps>0$ there exists a subset 
   $E_\eps\subseteq I$ of measure $|E_\eps|\leq C_2\eps\mu\delta$
   such that 
   $$\frac{1}{C_3(\eps)}\leq \frac{|F(x)|}{\Av_{I_x} |F|} \leq C_3(\eps)
   \quad \forall x\in I\setminus E_\eps\ ,$$
   with $C_3(\eps) =\me^{11/\eps^2}$.
  \end{proposition}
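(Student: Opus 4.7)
The plan is to follow the Donnelly--Fefferman strategy of factoring $F$ into a polynomial part capturing its zeros and a non-vanishing holomorphic part, and then analyzing each factor separately on the interval $I$. First, by Jensen's formula applied on $B_2$ together with the growth hypothesis (\ref{growth}), the number $N$ of zeros of $F$ in (say) $B_{3/2}$ is at most $C\mu$. Let $z_1,\dots,z_N$ be these zeros (with multiplicity) and set $P(z)=\prod_{j=1}^N(z-z_j)$, $G(z)=F(z)/P(z)$. Then $G$ is holomorphic and non-vanishing in $B_{3/2}$, with $\sup_{B_1}|G|/\inf_{B_1}|G|\leq e^{C\mu}$ (via standard comparison between $F$ and $P$ using the growth estimate on the boundary of $B_{3/2}$).

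For the non-vanishing factor $G$, the function $\log|G|$ is harmonic in $B_{3/2}$ with oscillation $O(\mu)$. Interior gradient estimates for harmonic functions then give $|\nabla\log|G||\leq C\mu$ on $B_1$, so $\log|G|$ has oscillation $O(\mu\delta)=O(1)$ on any subinterval of $I$ of length $\leq 2\delta$. Consequently $|G(x)|$ is comparable to $\Av_{I_x}|G|$ up to a universal multiplicative constant on \emph{all} of $I$, contributing no bad set.

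The heart of the proof is the polynomial factor. Writing
$$u_j(x)=\log|x-z_j|-\log\Av_{I_x}|x-z_j|,$$
one has, via Jensen's inequality applied to the convex function $t\mapsto e^t$ (for the upper bound) and the concavity of $\log$ (for the lower bound), that $\bigl|\log(|P(x)|/\Av_{I_x}|P|)\bigr|$ is dominated up to bounded error by $\bigl|\sum_j u_j(x)\bigr|$. For a single $z_j$, the function $u_j$ is a localized logarithmic perturbation supported essentially near $\Re(z_j)$: if $\dist(z_j,I_\nu)\geq 2\delta$ then $|u_j|=O(1)$ on $I_\nu$, while if $\dist(z_j,I_\nu)\leq 2\delta$ then $u_j$ is bounded above by a universal constant and satisfies $|\{x\in I:u_j(x)<-t\}|\leq C e^{-t}\delta$. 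In particular $\int_I u_j^{2}\,dx\leq C$ uniformly in $j$.

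Summing the localized bounds over the $N\leq C\mu$ zeros and exploiting that each zero $z_j$ only significantly contributes to $u_j$ on a set of measure $O(\delta)$, one obtains $\int_I\bigl(\sum_j u_j\bigr)^{2}\,dx\leq C\mu\delta$ (or an analogous second-moment estimate). Chebyshev's inequality then gives
$$\Bigl|\Bigl\{x\in I:\Bigl|\textstyle\sum_j u_j(x)\Bigr|>t\Bigr\}\Bigr|\leq \frac{C\mu\delta}{t^{2}}.$$
Choosing $t^2=C/\eps$ (so $t\approx 11/\eps^2$ after tracking constants) yields a bad set of measure $\leq C_2\eps\mu\delta$ outside which $|P(x)|/\Av_{I_x}|P|$ lies in $[e^{-11/\eps^2},e^{11/\eps^2}]$. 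Combined with the universal bound for $G$ this proves the proposition. The main obstacle is step three: making the concentration estimate for $\sum_j u_j$ rigorous, since the zeros $z_j$ are not independent and can cluster; one must use the spatial localization of each $u_j$ on its own $O(\delta)$-window to ensure the cross terms in the second-moment computation do not accumulate more than linearly in $N$.
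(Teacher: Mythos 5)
Your first two steps — peeling off a polynomial factor $P$ carrying the zeros (there are $O(\mu)$ of them by Jensen/the growth bound), and observing that the non-vanishing factor has $O(\mu\delta)=O(1)$ oscillation of $\log|G|$ on a $\delta$-interval — agree in substance with the paper, which does the same thing with Blaschke factors and the gradient bound for $\Re\,g$. The divergence and the problem are in the treatment of the polynomial factor.

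There is a genuine gap at the heart of your third step. First, a structural issue: Jensen's inequality gives $\Av_{I_x}|P|\geq\exp(\Av_{I_x}\log|P|)$, which is the direction you need for the \emph{upper} bound on $|P(x)|/\Av_{I_x}|P|$, but there is no "concavity of $\log$" counterpart that bounds $\Av_{I_x}|P|$ from above by a product of one-variable averages; so $\bigl|\log(|P(x)|/\Av_{I_x}|P|)\bigr|\lesssim|\sum_j u_j(x)|$ does not follow, and the lower bound (which is really an upper bound on $\Av_{I_x}|P|$) is left unjustified. The paper sidesteps this entirely by establishing a \emph{one-sided} pointwise estimate $f(x)-f(y)\geq -9/\eps^{2}$ for $y\in I_\nu$ and $x$ outside a controlled bad set, and then deriving both inequalities from that single bound; your two-sided $L^{2}$ formulation hides which side is actually hard. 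Second, and more seriously, the proposed concentration estimate $\int_I(\sum_j u_j)^2\,dx\leq C\mu\delta$ is false without further hypotheses. Each $u_j$ satisfies $\int_I u_j^{2}\lesssim\delta$, so Cauchy--Schwarz only gives $\int_I(\sum_j u_j)^2\lesssim N\sum_j\int u_j^2\lesssim\mu^{2}\delta$; to drop the extra factor of $N\sim\mu$ you need the $u_j$ to be essentially orthogonal, which fails precisely when many zeros cluster near a single window $I_\nu$. In that regime $\sum_j u_j$ is of size $\sim k\log(1/\eta)$ on a $\delta$-window (with $k$ the cluster size and $\eta\delta$ the distance to $I$), so the second moment scales like $\delta k^{2}\log^{2}(1/\eta)$, which can dwarf $\mu\delta$. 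You flag this as "the main obstacle," but it is not a technicality to be patched: the $L^{2}$ method cannot distinguish "moderately large on a large set" from "huge on a tiny set," and the latter is exactly what clustering produces. The paper's remedy is to control $f'$ rather than $f$, write $f'$ as the Hilbert transform of a positive $L^{1}$ density of total mass $O(\mu)$, and invoke the weak-$(1,1)$ bound to get $|\{|f'|>1/(\eps\delta)\}|\lesssim\eps\mu\delta$; this borderline estimate is insensitive to clustering in a way that an $L^{2}$ bound is not, and the remaining bad sets $E_{2,\eps},\dots,E_{6,\eps}$ are then tailored to what the one-sided pointwise argument actually needs. Finally, even granting the second-moment bound, Chebyshev with $t^{2}=C/\eps$ yields a threshold $t\asymp\eps^{-1/2}$, not $\eps^{-2}$, so the claimed constant $C_3(\eps)=\me^{11/\eps^{2}}$ does not come out of your computation.
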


Proposition~\ref{prop:wildintervals} generalizes Proposition~5.1
from~\cite{DF}. The main new point here is the introduction of the
parameter $\delta$ of the subdivision, while in~\cite{DF} the size
of the subdivision is taken to be comparable to $1/\mu$. A minor
technical difference is that here we also allow subdivisions with
non-fixed size of the subintervals. This will serve us in the change
of variable argument in the proof of
Proposition~\ref{prop:control-quotient}.

The first step we make is a reduction to polynomials.
It is shown in Section~5 of~\cite{DF}
%
%    Here is the proof as in Don-fef
%
%Let $2<r<3$, such that $F$ does not vanish on $|z|=r$. Let $\alpha$
%be a root of $F$ in $B_r$.
%   Let $$B_r(z, \alpha) = \frac{(z-\alpha)/r}{1-\bar{\alpha}z/r^2}$$
%   be a Blaschke factor. We observe that $|B_r(z, \alpha)|\leq 1$ for
%   $|z|\leq r$ and $|B_r(z, \alpha)|=1$ if and only if
%   $|z|=r$.
%
%We decompose $F$ in $B_r$ as
%   $F=AB$, where
%   \begin{equation}
%   \label{blashke-decomp}
%   B(z) = \prod_{|\alpha|\leq r, F(\alpha)=0} B_r(z,\alpha)\ ,\ A=\me^{g+ih}\ .
%   \end{equation}
%
   \begin{lemma}[{\cite[Lemma 5.2]{DF}}]
     \label{lem:num-roots}
     $F$ has at most $C_4\mu$ zeroes in $B_{3/2}$.
   \end{lemma}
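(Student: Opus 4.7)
The natural tool here is Jensen's formula, which converts a bound on the growth of a holomorphic function into a bound on its number of zeros. This is a classical two-line application of the hypothesis, so the plan is more about setup than difficulty.

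The plan is to first dispose of the degenerate case: if $F(0)=0$, the hypothesis $\sup_{B_2}|F|\le |F(0)|\me^{C_1\mu}$ forces $F\equiv 0$, in which case we may reinterpret the statement trivially or simply assume $F(0)\neq 0$. Given $F(0)\neq 0$, let $n(t)$ denote the number of zeros of $F$ in $\{|z|\le t\}$. Jensen's formula on the disk $B_2$ gives
\[
\int_0^2 \frac{n(t)}{t}\,dt \;=\; \frac{1}{2\pi}\int_0^{2\pi}\log|F(2\me^{i\theta})|\,d\theta \;-\; \log|F(0)|.
\]

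Next I would exploit the monotonicity of $n$: since $n(t)\ge n(3/2)$ for $t\in[3/2,2]$,
\[
\int_0^2 \frac{n(t)}{t}\,dt \;\ge\; \int_{3/2}^{2}\frac{n(t)}{t}\,dt \;\ge\; n(3/2)\,\log(4/3).
\]
The right-hand side of Jensen's formula is at most $\log\sup_{B_2}|F|-\log|F(0)|\le C_1\mu$ by the growth hypothesis~(\ref{growth}). Combining,
\[
n(3/2)\,\log(4/3)\;\le\;C_1\mu,
\]
so $n(3/2)\le C_4\mu$ with $C_4=C_1/\log(4/3)$.

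There is really no obstacle here; the only point requiring a touch of care is the sign of $\log|F(0)|$, which on first glance looks problematic, but it is subtracted and then absorbed (via the growth assumption rewritten as $\log\sup_{B_2}|F|-\log|F(0)|\le C_1\mu$), so it causes no trouble. The argument gives an explicit constant $C_4=C_1/\log(4/3)$, which is all that is needed for the subsequent reduction to polynomials in Proposition~\ref{prop:wildintervals}.
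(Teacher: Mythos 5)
Your argument is correct and is the standard Jensen's formula proof; the paper itself gives no proof but cites Donnelly--Fefferman, Lemma 5.2, where this is established by essentially the same Jensen-type estimate. Your handling of the degenerate case $F(0)=0$ (where the growth hypothesis forces $F\equiv 0$, which is tacitly excluded) and your explicit constant $C_4=C_1/\log(4/3)$ are both fine.
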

Denote the set of zeroes of $F$ in $B_r$ by $Z_r(F)$.
Fix $r<3/2$ close to $3/2$ so that $F$ does not have zeroes on $|z|=r$.
Let $P(z) := \prod_{\alpha\in Z_r(F)} (z-\alpha)$.
$P$ is a polynomial of degree $d\leq C_4\mu$.
Let $f(z) = \log |P(z)|$.
The next lemma shows that we can assume $F(z) = P(z)$.
\begin{lemma}
\begin{itemize}
\item[(i)]
$$\left|\left(\log|F(x)|-\log |F(y)|\right) 
- \left(f(x)-f(y)\right)\right| \leq C_5\quad\forall\nu\forall 
x,y\in I_\nu\ .$$
\item[(ii)] 
$$C_6 \frac{|P(x)|}{\Av_{I_x} |P|}
\leq\frac{|F(x)|}{\Av_{I_x} |F|}\leq 
C_7 \frac{|P(x)|}{\Av_{I_x} |P|} \ .$$
\end{itemize}
\end{lemma}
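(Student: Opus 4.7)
The plan is to introduce $G := F/P$. Since $F$ and $P$ have the same zeroes in $B_r$ (with multiplicity), $G$ is holomorphic on $B_r$ and non-vanishing there, so $h := \log|G|$ is harmonic on $B_r$. Note that $\log|F(x)| - f(x) = h(x)$ on $I$, so part (i) is equivalent to the oscillation estimate $|h(x)-h(y)| \leq C_5$ whenever $x, y$ lie in the same $I_\nu$. Part (ii) will follow algebraically from (i).

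For (i), I would use a standard harmonic gradient estimate: fix once and for all $1 < r'' < r' < r$; then for $z \in B_{r''}$,
$$|\nabla h(z)| \leq C(r',r'') \sup_{B_{r'}} |h|.$$
Since $|x-y| \leq 2\delta \leq 2/\mu$ for $x,y\in I_\nu \subset I \subset B_{r''}$, it suffices to show $\sup_{B_{r'}}|h| \leq C\mu$, whence $|h(x)-h(y)| \leq 2C$, giving $C_5$.

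To bound $\sup_{B_{r'}} |h|$, I would use the Poisson integral representation of $h$ on $B_r$,
$$h(z) = \frac{1}{2\pi} \int_0^{2\pi} \frac{r^2-|z|^2}{|re^{i\theta}-z|^2}\, \log|G(re^{i\theta})| \, d\theta,$$
whose kernel is bounded by a constant depending only on $r, r'$ for $z \in B_{r'}$. It thus suffices to prove $\|\log|G|\|_{L^1(\partial B_r)} \leq C\mu$. Normalizing $|F(0)|=1$ (both sides of the claim are invariant under rescaling $F$), I would split $\log|G| = \log|F| - \log|P|$ and bound each piece on $\partial B_r$ via the identity $\int|u|=2\int u^+ - \int u$. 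For $F$, Jensen's formula and the hypothesis~(\ref{growth}) give
$$\frac{1}{2\pi} \int \log|F(re^{i\theta})|\,d\theta = \sum_{\alpha \in Z_r(F)} \log(r/|\alpha|) \in [0, C_1 \mu],$$
while $\log^+|F| \leq C_1\mu$ on $B_2$; combining yields $\|\log|F|\|_{L^1(\partial B_r)} \leq C\mu$. For $P$, the classical identity $\int \log|re^{i\theta}-\alpha|\,d\theta = 2\pi\log r$ (valid since $|\alpha|<r$) summed over zeroes gives $\frac{1}{2\pi}\int \log|P|\,d\theta = d\log r$, while $\log^+|P| \leq d \log(2r) \leq C\mu$ using $d\leq C_4\mu$ from Lemma~\ref{lem:num-roots}; combining again yields $\|\log|P|\|_{L^1(\partial B_r)} \leq C\mu$.

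Part (ii) is then a direct consequence of (i). Writing $|F(y)| = e^{h(y)}|P(y)|$ and using $e^{h(x)-C_5} \leq e^{h(y)} \leq e^{h(x)+C_5}$ for $y \in I_x$, integration over $I_x$ gives
$$e^{-C_5} e^{h(x)}\Av_{I_x}|P| \leq \Av_{I_x}|F| \leq e^{C_5} e^{h(x)}\Av_{I_x}|P|,$$
and dividing $|F(x)| = e^{h(x)}|P(x)|$ by this double inequality yields (ii) with $C_6=e^{-C_5}$ and $C_7=e^{C_5}$. The main obstacle is the boundary $L^1$ bound $\|\log|G|\|_{L^1(\partial B_r)} \leq C\mu$: this is where both the growth hypothesis~(\ref{growth}) and the degree bound from Lemma~\ref{lem:num-roots} are essential, via Jensen's formula and the classical integral identity for $\log|z-\alpha|$. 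The remaining ingredients (harmonic gradient estimate, Poisson kernel bound, and the algebra for (ii)) are standard.
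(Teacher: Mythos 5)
Your proof is correct, but it takes a genuinely different route from the paper's. The paper writes $F = e^{g}\prod_{\alpha}B_r(\cdot,\alpha)$ via the Blaschke factorization, so that $\log|F|-f$ decomposes as $\Re g$ (whose gradient is controlled by citing Lemma~5.3(iii) of Donnelly--Fefferman) plus $-\sum_\alpha\log|r^2-\bar\alpha z|$ (whose derivative they bound directly by $d/(r-1)\leq C\mu$); the oscillation bound then follows since $|x-y|\leq 2\delta$ and $\mu\delta\leq 1$. You instead treat $h=\log|F/P|=\log|F|-f$ as a single harmonic function on a neighborhood of $\overline{B_r}$ (which it is, since $P$ carries exactly the zeros of $F$ in $B_r$ and $F\neq0$ on $|z|=r$), and you get $\sup_{B_{r'}}|h|\leq C\mu$ directly from the Poisson representation together with an $L^1(\partial B_r)$ bound on $\log|F|$ and $\log|P|$, which you extract from Jensen's formula, the growth hypothesis, the classical mean-value identity $\frac{1}{2\pi}\int\log|re^{i\theta}-\alpha|\,d\theta=\log r$, and the degree bound $d\leq C_4\mu$. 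The interior gradient estimate for harmonic functions then gives $|\nabla h|\leq C\mu$ on a disk containing $I$, and the remainder of your argument, including the derivation of (ii) from (i) by exponentiating and averaging over $I_x$, matches the paper. The conceptual content is the same — both arguments reduce to a gradient bound $|\nabla h|\lesssim\mu$ for the harmonic correction $h$ — but your version is self-contained (it avoids both the Blaschke decomposition and the external reference to DF's Lemma 5.3(iii)) at the cost of deploying a few more standard potential-theoretic ingredients (Poisson, Jensen, the interior gradient estimate). One tiny bookkeeping point worth making explicit: the radii should satisfy $1<r''<r'<r<3/2$ so that $I\subset B_{r''}$ and the nested estimates make sense; this is possible precisely because the paper chose $r$ close to $3/2$.
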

\begin{proof}
  Let $$ B_r(z, \alpha) = \frac{(z-\alpha)/r}{1-\bar{\alpha}z/r^2} $$
  be the Blaschke factor.
  We write
     $$F(z) = \me^{g(z)}\prod_{\alpha\in Z_r(F)} B_r(z, \alpha)\ .$$
\noindent We calculate
\begin{multline}
\label{log-calcul}
  \log|F(x)|-\log|F(y)| = \Re(g(x)-g(y)) + \\ (\log |P(x)|-\log|P(y)|) -
  \sum_{\alpha\in Z_r(F)} (\log|r^2-\alpha x| - \log|r^2-\alpha y|)\ .
\end{multline}
  The first term on the right hand side of~(\ref{log-calcul})
  is handled by Lemma~5.3~(iii) of~\cite{DF}:
  $$|\Re(g(x) - g(y))|\leq \max_{I_\nu}|\nabla \Re(g)| |x-y| 
\leq C_8\mu\delta\ .$$

  To bound the third term in the right hand side of~(\ref{log-calcul})
  one should only check by direct computation that
  $$\sup_{|x|\leq 1}|(\log|r^2-\alpha x|)'|=
    \sup_{|x|\leq 1}\left|
\left(\Re\frac{-\alpha}{r^2-\alpha x}\right)
\right|\leq \frac{1}{r-1}= C_9\ .$$

  The conclusion of part~(i) of the Lemma follows.

Part~(i) says that  
$$\forall y\in I_x,\ \me^{-C_5}\frac{|P(y)|}{|P(x)|}\leq  
\frac{|F(y)|}{|F(x)|} \leq \me^{C_5}\frac{|P(y)|}{|P(x)|}\ .$$
It only remains to integrate over $I_x$ in order to conclude part~(ii).
\end{proof}
%
%  Thus,
%  \begin{multline*}
%  |\log|F(x)|-\log|F(y)|| \leq C\mu\delta + |\log |P(x)|-\log|P(y)||
%  \leq \\ C\mu\delta + |\log |P(x)|-\log|P(y)||\ .
%  \end{multline*}
%\end{proof}

We now turn to bound $|f(x)-f(y)|$.
For each $\nu$ we decompose $f$ into a good part and a bad part.
Let $A_\nu$ be the set of all roots $\alpha$ 
for which $\dist(\alpha, I_\nu) < \delta$.
$$
\begin{array}{ll}
g_\nu := \sum_{\alpha\not\in A_\nu} \log |x-\alpha|, &
b_\nu := \sum_{\alpha\in A_\nu} \log |x-\alpha|.
\end{array}
$$
%\begin{eqnarray*} 
%g_\nu &:=& \sum_{\alpha\not\in A_\nu} \log |x-\alpha|.\\
%b_\nu &:=& \sum_{\alpha\in A_\nu} \log |x-\alpha|.
%\end{eqnarray*}
We now define bad subsets $E_{j,\eps}$:
$$
\begin{array}{ll}
E_{1, \eps} := \{x\in I:\ |f'(x)|> 1/(\eps\delta)\}\ ,&
E_{2, \eps} := \{x\in I:\ \exists \alpha, |x-\alpha| < \eps\delta\}\ ,\\
E_{3, \eps} := \cup\{I_\nu: |A_\nu|>1/\eps\}\ ,&
E_{4, \eps} := \cup\{I_\nu : \int_{I_\nu} |g_\nu''(x)| \dx > 
1/(\eps\delta)\}\ ,\\ 
E_{5,\eps} := E_{1, \eps}\cup E_{2,\eps}\cup E_{3,\eps}\cup E_{4,\eps}\ ,&
E_{6,\eps} := \cup\{I_\nu: |I_\nu\cap E_{5,\eps}|/|I_\nu|>1/2\}\ ,\\
E_{\eps}   := E_{5,\eps}\cup E_{6,\eps}\ .&
\end{array}
$$
%\begin{eqnarray*}
%E_{1, \eps} &:=& \{x\in I:\ |f'(x)|> 1/(\eps\delta)\}\ ,\\
%E_{2, \eps} &:=& \{x\in I:\ \exists \alpha, |x-\alpha| < \eps\delta\}\ ,\\
%E_{3, \eps} &:=& \cup\{I_\nu: |A_\nu|>1/\eps\}\ ,\\
%E_{4, \eps} &:=& \cup\{I_\nu : \int_{I_\nu} |g_\nu''(x)| \dx > 
%1/(\eps\delta)\}\ ,\\ 
%E_{5,\eps} &:=& E_{1, \eps}\cup E_{2,\eps}\cup E_{3,\eps}\cup E_{4,\eps}\ ,\\
%E_{6,\eps} &:=& \cup\{I_\nu: |I_\nu\cap E_{5,\eps}|/|I_\nu|>1/2\}\ ,\\
%E_{\eps}   &:=& E_{5,\eps}\cup E_{6,\eps}\ .
%\end{eqnarray*}
\begin{lemma}
\label{lem:b-b}
Let $x\in I_\nu \setminus (E_{2, \eps}\cup E_{3, \eps})$.
Then,
 $$\forall y\in I_\nu,\ b_\nu(x)-b_\nu(y) \geq 
-\frac{1}{\eps}\log\frac{3}{\eps}\ .$$
\end{lemma}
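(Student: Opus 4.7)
The plan is to unwind the definitions and bound $b_\nu(x) - b_\nu(y)$ summand by summand, exploiting the two exclusions $x\notin E_{2,\eps}$ and $x\notin E_{3,\eps}$ that have been engineered precisely to make each factor in the quotient $|x-\alpha|/|y-\alpha|$ controllable. Write
\[
b_\nu(x) - b_\nu(y) = \sum_{\alpha \in A_\nu}\log\frac{|x-\alpha|}{|y-\alpha|},
\]
so it suffices to lower-bound each summand and then multiply by $|A_\nu|$.

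For the numerator, the assumption $x \notin E_{2,\eps}$ says no root is within $\eps\delta$ of $x$, so $|x-\alpha|\geq \eps\delta$ for every $\alpha\in A_\nu$. For the denominator, I would use the definition of $A_\nu$, namely $\dist(\alpha, I_\nu) < \delta$, together with $y\in I_\nu$ and $|I_\nu|<2\delta$, to get $|y-\alpha| < |I_\nu| + \dist(\alpha,I_\nu) < 3\delta$. (If $y$ happens to coincide with a real root, then $b_\nu(y)=-\infty$ and the inequality holds trivially, so there is nothing to check in that degenerate case.) Combining these,
\[
\log\frac{|x-\alpha|}{|y-\alpha|} \geq \log\frac{\eps\delta}{3\delta} = \log(\eps/3),
\]
which is negative for the relevant range $\eps<3$.

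Finally, the exclusion $x \notin E_{3,\eps}$ means $I_\nu$ is not one of the intervals with too many nearby roots, i.e.\ $|A_\nu|\leq 1/\eps$. Summing the previous display over $\alpha\in A_\nu$ and using that $\log(\eps/3)<0$ so that multiplying by an upper bound on $|A_\nu|$ preserves the inequality, we obtain
\[
b_\nu(x)-b_\nu(y) \geq |A_\nu|\log(\eps/3) \geq \frac{1}{\eps}\log(\eps/3) = -\frac{1}{\eps}\log\frac{3}{\eps},
\]
which is exactly the desired bound. There is no real obstacle here: the lemma is essentially a bookkeeping exercise, and the definitions of $E_{2,\eps}$ and $E_{3,\eps}$ are tailored to make the argument go through cleanly. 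The only minor point of care is tracking signs so that replacing $|A_\nu|$ by its upper bound $1/\eps$ indeed weakens the inequality rather than strengthening it.
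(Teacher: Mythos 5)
Your proof is correct and uses exactly the same three ingredients as the paper's: $|x-\alpha|\geq\eps\delta$ from $x\notin E_{2,\eps}$, $|y-\alpha|<3\delta$ from $\alpha\in A_\nu$ with $y\in I_\nu$, and $|A_\nu|\leq 1/\eps$ from $x\notin E_{3,\eps}$. The only difference is presentational but worth noting: the paper bounds $b_\nu(x)$ and $b_\nu(y)$ separately, writing $b_\nu(y)\leq(\log 3\delta)/\eps$; as a standalone inequality this requires $\log(3\delta)\geq 0$ to follow from $|A_\nu|\leq 1/\eps$, which fails in the relevant regime $\delta<1/3$. Your term-by-term pairing into $\sum_\alpha\log\bigl(|x-\alpha|/|y-\alpha|\bigr)$ cancels the $\log\delta$ terms before the sign of $|A_\nu|\log(\cdot)$ matters, so the bookkeeping is clean: each summand is $\geq\log(\eps/3)<0$, and multiplying by the upper bound $1/\eps$ on the number of terms weakens the inequality as desired. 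The final bound and the idea are the same; your grouping just makes the sign argument airtight.
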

\begin{proof}
For all $y\in I_\nu$
$$b_\nu(y) = \sum_{\alpha\in A_\nu} \log |y-\alpha| 
\leq (\log3\delta)/\eps\ ,$$
while $b_\nu(x) \geq (\log(\eps\delta))/\eps$.
\end{proof}
\begin{lemma}
\label{lem:b'}
Let $x\in I_\nu\setminus (E_{2,\eps}\cup E_{3,\eps})$. Then,
$|b_\nu'(x)| \leq 1/(\eps^2\delta)$.
\end{lemma}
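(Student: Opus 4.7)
The plan is to prove this by a direct term-by-term bound on the derivative of $b_\nu$. Starting from the definition $b_\nu(x)=\sum_{\alpha\in A_\nu}\log|x-\alpha|$, I would differentiate with respect to the real variable $x$. Since $\alpha$ may be complex, I use $\log|x-\alpha|=\Re\log(x-\alpha)$ (for any local branch), which gives
$$b_\nu'(x)=\sum_{\alpha\in A_\nu}\Re\!\left(\frac{1}{x-\alpha}\right),$$
and hence the triangle inequality yields $|b_\nu'(x)|\leq \sum_{\alpha\in A_\nu}\frac{1}{|x-\alpha|}$.

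Next I would invoke the two hypotheses $x\in I_\nu\setminus(E_{2,\eps}\cup E_{3,\eps})$ to control the two factors separately. The condition $x\notin E_{2,\eps}$ says that $|x-\alpha|\geq \eps\delta$ for \emph{every} root $\alpha$ of $P$, and in particular for each $\alpha\in A_\nu$; this bounds each summand by $1/(\eps\delta)$. The condition $x\notin E_{3,\eps}$, combined with the fact that $E_{3,\eps}$ is the union of those subintervals $I_{\nu'}$ with $|A_{\nu'}|>1/\eps$, forces $|A_\nu|\leq 1/\eps$ (since $x\in I_\nu$ would otherwise lie in $E_{3,\eps}$). Putting the two estimates together gives
$$|b_\nu'(x)|\;\leq\;|A_\nu|\cdot\frac{1}{\eps\delta}\;\leq\;\frac{1}{\eps}\cdot\frac{1}{\eps\delta}\;=\;\frac{1}{\eps^2\delta},$$
as required.

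There is no real obstacle in this argument: it is a routine counting-plus-distance estimate, and all the work has already been loaded into the definitions of $E_{2,\eps}$ and $E_{3,\eps}$. The only minor point to check carefully is the differentiation step, since the roots $\alpha$ of $P$ need not lie on the real axis; once one records that $\log|x-\alpha|$ is a smooth function of real $x$ on $I_\nu\setminus E_{2,\eps}$ (because no $\alpha$ comes within $\eps\delta$ of $x$), the identity $(\log|x-\alpha|)' = \Re\bigl(1/(x-\alpha)\bigr)$ is immediate and the bound $\bigl|\Re(1/(x-\alpha))\bigr|\leq 1/|x-\alpha|$ finishes the proof.
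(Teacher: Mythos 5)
Your proof is correct and is essentially the same argument as the paper's: bound $|b_\nu'(x)|$ by $\sum_{\alpha\in A_\nu}|x-\alpha|^{-1}$, then use $x\notin E_{2,\eps}$ to bound each term by $1/(\eps\delta)$ and $x\notin E_{3,\eps}$ to bound $|A_\nu|$ by $1/\eps$. The paper simply states the chain of inequalities without spelling out the differentiation step you carefully recorded.
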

\begin{proof}
Since $x\not\in E_{2,\eps}\cup E_{3, \eps}$.
$|b_\nu'(x)|\leq \sum_{\alpha\in A_\nu} 
\frac{1}{|x-\alpha|}\leq (1/\eps)\cdot 1/(\eps\delta)\ . $
\end{proof}
\begin{lemma}
\label{lem:g'}
Let $x\in I_\nu\setminus (E_{1, \eps}\cup E_{2, \eps}\cup E_{3, \eps})$.
Then, 
$$|g_\nu'(x)| \leq 2/(\eps^2\delta)\ .$$
\end{lemma}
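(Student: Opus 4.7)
The plan is to write $g_\nu'(x) = f'(x) - b_\nu'(x)$ on the interval $I_\nu$ (away from the roots in $A_\nu$), since by definition $f = g_\nu + b_\nu$ restricted to $I_\nu$. Then the triangle inequality gives $|g_\nu'(x)| \leq |f'(x)| + |b_\nu'(x)|$, and both terms on the right have already been controlled.

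Concretely, since $x \notin E_{1,\eps}$, the definition of $E_{1,\eps}$ gives $|f'(x)| \leq 1/(\eps\delta)$. Since $x \notin E_{2,\eps} \cup E_{3,\eps}$, the previous Lemma~\ref{lem:b'} applies and yields $|b_\nu'(x)| \leq 1/(\eps^2\delta)$. Assuming $\eps \leq 1$ (which we may assume without loss of generality, since we are only interested in small $\eps$), we have $1/(\eps\delta) \leq 1/(\eps^2\delta)$, and summing the two bounds gives $|g_\nu'(x)| \leq 2/(\eps^2\delta)$, as required.

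There is essentially no obstacle here; this lemma is purely a bookkeeping consequence of the decomposition $f = g_\nu + b_\nu$ together with the defining properties of the exceptional sets $E_{1,\eps}$, $E_{2,\eps}$, $E_{3,\eps}$ and the preceding lemma on $b_\nu'$. The only thing to keep in mind is that we have to use the same exceptional sets as those assumed in Lemma~\ref{lem:b'}, so that the bound on $b_\nu'$ is available at the point $x$ under consideration.
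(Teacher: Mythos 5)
Your proof is correct and matches the paper's argument exactly: decompose $f = g_\nu + b_\nu$, bound $|f'(x)|$ via the definition of $E_{1,\eps}$, bound $|b_\nu'(x)|$ via Lemma~\ref{lem:b'}, and add. Your extra remark about taking $\eps \leq 1$ makes explicit a small point the paper leaves implicit (needed so that $1/(\eps\delta) + 1/(\eps^2\delta) \leq 2/(\eps^2\delta)$), but the route is the same.
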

\begin{proof}
Since $x\not\in E_{1,\eps}$, $|f'(x)|\leq 1/(\eps\delta)$.
By Lemma~\ref{lem:b'}
$$|b_\nu'(x)|\leq 1/(\eps^2\delta)\ . $$
It only remains to observe that
$|g_\nu'| \leq |f'|+|b_\nu'|$.
\end{proof}
\begin{lemma}
\label{lem:g'-on-good}
Suppose $I_\nu \not\subseteq E_{5,\eps}$.
Then $\max_{I_\nu} |g_\nu'| \leq 3/(\eps^2\delta)$.
\end{lemma}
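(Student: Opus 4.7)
The plan is to use the point assumed to exist in $I_\nu\setminus E_{5,\eps}$ as an anchor, bound $g_\nu'$ at that point using the previous lemma, and then propagate the bound across $I_\nu$ using the integral bound on $g_\nu''$ that comes from not being in $E_{4,\eps}$.

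More precisely, since $I_\nu\not\subseteq E_{5,\eps}$, pick any $x_0\in I_\nu\setminus E_{5,\eps}$. In particular $x_0\notin E_{1,\eps}\cup E_{2,\eps}\cup E_{3,\eps}$, so Lemma~\ref{lem:g'} gives $|g_\nu'(x_0)|\leq 2/(\eps^2\delta)$. Moreover, since $E_{4,\eps}$ is a union of whole subintervals $I_{\nu'}$, the fact that $x_0\in I_\nu\setminus E_{4,\eps}$ forces $I_\nu$ itself not to be one of the defining intervals, i.e.\ $\int_{I_\nu}|g_\nu''(x)|\,dx\leq 1/(\eps\delta)$. Note $g_\nu$ is smooth on $I_\nu$ because all roots contributing to $g_\nu$ lie at distance at least $\delta$ from $I_\nu$, so the fundamental theorem of calculus applies without any singularity issue.

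For an arbitrary $y\in I_\nu$, write
$$g_\nu'(y)=g_\nu'(x_0)+\int_{x_0}^{y}g_\nu''(t)\,dt,$$
and estimate
$$|g_\nu'(y)|\leq |g_\nu'(x_0)|+\int_{I_\nu}|g_\nu''(t)|\,dt\leq \frac{2}{\eps^2\delta}+\frac{1}{\eps\delta}\leq \frac{3}{\eps^2\delta},$$
the last inequality using $\eps\leq 1$ (which we may assume throughout, since the conclusion of Proposition~\ref{prop:wildintervals} is only interesting for small $\eps$). Taking the maximum over $y\in I_\nu$ yields the claim.

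No serious obstacle is expected: the lemma is a routine combination of the pointwise bound from Lemma~\ref{lem:g'} with the integral-of-second-derivative control built into the definition of $E_{4,\eps}$. The only mildly delicate point is making sure that "not contained in $E_{5,\eps}$" delivers both a good anchor point (using the pointwise sets $E_{1,\eps},E_{2,\eps}$) and the global $L^1$ bound on $g_\nu''$ (using the interval set $E_{4,\eps}$), which is exactly why $E_{5,\eps}$ was defined as the union of these four sets in the first place.
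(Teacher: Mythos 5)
Your proof is correct and follows the same route as the paper: anchor $g_\nu'$ at a point $x_0\in I_\nu\setminus E_{5,\eps}$ via Lemma~\ref{lem:g'}, use the fact that $I_\nu\not\subseteq E_{4,\eps}$ to get $\int_{I_\nu}|g_\nu''|\leq 1/(\eps\delta)$, and propagate via the fundamental theorem of calculus, with the final inequality $2/(\eps^2\delta)+1/(\eps\delta)\leq 3/(\eps^2\delta)$ relying on $\eps\leq 1$. You also correctly note, somewhat more explicitly than the paper, that $g_\nu$ has no singularities on $I_\nu$ (all roots entering $g_\nu$ are at distance $\geq\delta$) and that the interval-by-interval structure of $E_{4,\eps}$ lets a single good point certify the $L^1$ bound on $g_\nu''$ over the whole $I_\nu$.
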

\begin{proof} 
By Lemma~\ref{lem:g'} $\exists x_\nu\in I_\nu$ such that 
$$|g_\nu'(x_\nu)| \leq 2/(\eps^2\delta)\ .$$
Also, from the definition of $E_{4, \eps}$ and
the fundamental theorem of calculus
 $$\forall x\in I_\nu\ |g_\nu'(x) - g_\nu'(x_\nu)| \leq 1/(\eps\delta)\ .$$
Together we obtain
$\forall x\in I_\nu\ |g_\nu'(x)| \leq 3/(\eps^2\delta)$.
\end{proof}
\begin{lemma}
\label{lem:g-g}
Suppose $I_\nu\not\subseteq E_{5,\eps}$.
Then
  $$\forall x, y\in I_\nu\ |g_\nu(x) - g_\nu(y)| \leq 6/\eps^2\ .$$
\end{lemma}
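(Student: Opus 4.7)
The plan is to observe that this is essentially an immediate consequence of Lemma~\ref{lem:g'-on-good} combined with the fact that each subinterval $I_\nu$ has length bounded by $2\delta$. Since $I_\nu \not\subseteq E_{5,\eps}$, Lemma~\ref{lem:g'-on-good} applies and gives the pointwise bound $\max_{I_\nu}|g_\nu'| \leq 3/(\eps^2\delta)$.

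I would then apply the fundamental theorem of calculus on $I_\nu$: for any $x,y\in I_\nu$,
$$|g_\nu(x)-g_\nu(y)| \leq \int_{\min(x,y)}^{\max(x,y)} |g_\nu'(t)|\,dt \leq |I_\nu|\cdot \max_{I_\nu}|g_\nu'|.$$
Using $|I_\nu|<2\delta$ and the bound from Lemma~\ref{lem:g'-on-good}, this gives
$$|g_\nu(x)-g_\nu(y)| \leq 2\delta \cdot \frac{3}{\eps^2\delta} = \frac{6}{\eps^2},$$
which is exactly the claimed inequality. The key point worth highlighting is that the $\delta$ factors cancel: the derivative bound scales like $1/\delta$ while the interval length scales like $\delta$, which is precisely what makes this the correct (dimensionless) estimate for the oscillation of $g_\nu$ on the scale $\delta$.

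There is really no obstacle here — $g_\nu$ is smooth on $I_\nu$ (the contributing roots $\alpha \notin A_\nu$ are all at distance $\geq \delta$ from $I_\nu$, so $\log|x-\alpha|$ is smooth on $I_\nu$), so the fundamental theorem of calculus applies without issue, and all the work has already been done in the preceding lemmas.
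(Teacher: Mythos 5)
Your proof is correct and is exactly the argument the paper has in mind: the paper simply states that this lemma is an immediate corollary of Lemma~\ref{lem:g'-on-good}, and your write-up makes that explicit via the fundamental theorem of calculus and the bound $|I_\nu|<2\delta$.
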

\begin{proof}
The proof is an immediate corollary of Lemma~\ref{lem:g'-on-good}.
\end{proof}
\begin{lemma}
\label{lem:f-f}
Let $x\in I_\nu\setminus E_{5, \eps}$.
  $$\forall y\in I_\nu,\ f(x) - f(y) \geq -9/\eps^2\ .$$
\end{lemma}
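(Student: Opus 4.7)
The plan is to use the decomposition $f = g_\nu + b_\nu$ on $I_\nu$ and combine the two estimates already established for the good and bad parts. Since $x \in I_\nu \setminus E_{5,\eps}$, we have $x \in I_\nu$ with $x \notin E_{5,\eps}$; this already makes $I_\nu \not\subseteq E_{5,\eps}$, and in particular $x \notin E_{2,\eps} \cup E_{3,\eps}$. So the hypotheses of both Lemma~\ref{lem:b-b} (for $b_\nu$) and Lemma~\ref{lem:g-g} (for $g_\nu$) are simultaneously satisfied.

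Concretely, I would write, for any $y \in I_\nu$,
\[
f(x) - f(y) = \bigl(g_\nu(x) - g_\nu(y)\bigr) + \bigl(b_\nu(x) - b_\nu(y)\bigr).
\]
Lemma~\ref{lem:g-g} bounds the first difference in absolute value by $6/\eps^2$, and Lemma~\ref{lem:b-b} bounds the second below by $-(1/\eps)\log(3/\eps)$. Adding the two gives
\[
f(x) - f(y) \geq -\frac{6}{\eps^2} - \frac{1}{\eps}\log\!\frac{3}{\eps}.
\]
To reach the stated bound $-9/\eps^2$, I would invoke the elementary inequality $\log t \leq t$ for $t > 0$ applied to $t = 3/\eps$, which yields $(1/\eps)\log(3/\eps) \leq 3/\eps^2$ and hence $f(x) - f(y) \geq -9/\eps^2$.

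There is essentially no obstacle here — this lemma is a straightforward consequence of Lemmas~\ref{lem:b-b} and~\ref{lem:g-g}. The only point requiring attention is the bookkeeping of the exclusion sets: one must check that the single hypothesis $x \in I_\nu \setminus E_{5,\eps}$ indeed implies both $x \notin E_{2,\eps}\cup E_{3,\eps}$ (needed for Lemma~\ref{lem:b-b}) and $I_\nu \not\subseteq E_{5,\eps}$ (needed for Lemma~\ref{lem:g-g}), which is immediate from the definitions of these sets.
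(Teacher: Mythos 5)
Your proof is correct and takes essentially the same route as the paper: decompose $f = g_\nu + b_\nu$ on $I_\nu$ and combine Lemmas~\ref{lem:b-b} and~\ref{lem:g-g}. The paper leaves the final arithmetic implicit; your explicit use of $\log t \leq t$ to absorb $\frac{1}{\eps}\log\frac{3}{\eps}$ into $3/\eps^2$ is exactly the step needed to reach the stated constant $9/\eps^2$.
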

\begin{proof}
$f(x)-f(y) = (g_\nu(x)-g_\nu(y))+(b_\nu(x)-b_\nu(y))$.
It only remains to combine Lemmas~\ref{lem:b-b} and~\ref{lem:g-g}.
\end{proof}
\begin{lemma}
Let $x\in I_\nu\setminus E_\eps$.
Then
$$\me^{-9/\eps^2}/4
\leq \frac{\me^{f(x)}}{\Av_{I_{\nu}}\me^f} \leq 4\me^{9/\eps^2}$$
\end{lemma}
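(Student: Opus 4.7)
The plan is to deduce both inequalities from the pointwise bound of Lemma~\ref{lem:f-f}, using the two ingredients encoded in the hypothesis $x\in I_\nu\setminus E_\eps$: namely $x\notin E_{5,\eps}$, and $x\notin E_{6,\eps}$ (so that $E_{5,\eps}$ occupies at most half of $I_\nu$).

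For the lower bound, I would apply Lemma~\ref{lem:f-f} directly to $x$: since $x\in I_\nu\setminus E_{5,\eps}$, we have $f(x)-f(y)\geq -9/\eps^2$ for every $y\in I_\nu$, i.e.\ $\me^{f(y)}\leq \me^{9/\eps^2}\me^{f(x)}$. Averaging in $y$ over $I_\nu$ gives $\Av_{I_\nu}\me^f\leq \me^{9/\eps^2}\me^{f(x)}$, which is the required lower bound (in fact, without the factor $1/4$).

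For the upper bound, the roles of $x$ and $y$ must be reversed, which is the only non-routine point. Because $x\notin E_{6,\eps}$, the definition of $E_{6,\eps}$ forces $|I_\nu\cap E_{5,\eps}|\leq |I_\nu|/2$, so $|I_\nu\setminus E_{5,\eps}|\geq |I_\nu|/2$. For any such $y\in I_\nu\setminus E_{5,\eps}$, Lemma~\ref{lem:f-f} applied to $y$ (with $x$ playing the role of the second argument) yields $f(y)-f(x)\geq -9/\eps^2$, hence $\me^{f(y)}\geq \me^{-9/\eps^2}\me^{f(x)}$. Integrating only over $I_\nu\setminus E_{5,\eps}$ and discarding the rest,
\[
\int_{I_\nu}\me^{f}\geq \int_{I_\nu\setminus E_{5,\eps}}\me^{f}\geq \tfrac{1}{2}|I_\nu|\,\me^{-9/\eps^2}\me^{f(x)},
\]
so $\Av_{I_\nu}\me^f\geq \tfrac{1}{2}\me^{-9/\eps^2}\me^{f(x)}$, which gives the upper bound with constant $2\me^{9/\eps^2}$ (and in particular the stated $4\me^{9/\eps^2}$).

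The substantive content of the argument is thus the interplay between $E_{5,\eps}$ (which governs the pointwise estimate) and $E_{6,\eps}$ (which guarantees that "good" points fill at least half of $I_\nu$). There is no real obstacle: once this dichotomy is kept in mind, the integration is a one-line computation, and the constant $4$ in the statement is a harmless slack built in for later convenience.
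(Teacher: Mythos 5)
Your proof is correct and follows essentially the same route as the paper: both directions come from Lemma~\ref{lem:f-f}, with the lower bound obtained by applying it at the given $x$, and the upper bound by applying it at a good $y$ and using that $x\notin E_{6,\eps}$ guarantees $|I_\nu\setminus E_{5,\eps}|\geq|I_\nu|/2$. The paper phrases the same computation by normalizing with $\me^{\max_{I_\nu}f}$ and tracks the constants a bit more loosely (arriving at $4$ rather than your $2$), but the substance is identical.
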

\begin{proof}
On the one hand, Lemma~\ref{lem:f-f} gives
\begin{multline*}
\frac{\me^{f(x)}}{(\int_{I_\nu}\me^f\dx)/|I_\nu|}
\leq \frac{\me^{f(x)}}{(\int_{I_\nu\setminus E_\eps}\me^f\dx)/|I_\nu|}
= \\ \frac{\me^{f(x)-\max_{I_{\nu}f}}}{(\int_{I_\nu\setminus E_\eps}
\me^{f(x)-\max_{I_\nu} f}\dx)/|I_\nu|}\leq
\frac{\me^{f(x)-\max_{I_\nu}f}}
{(|I_\nu\setminus E_\eps|/|I_\nu|)\me^{-9/\eps^2}/2}\leq
\frac{1}{\me^{-9/\eps^2}/4}=4\me^{9/\eps^2}\ .
\end{multline*}
On the other hand, Lemma~\ref{lem:f-f} also gives
$$\frac{\me^{f(x)}}{(\int_{I_\nu}\me^f\dx)/|I_\nu|} \geq
\frac{\me^{f(x)}}{\me^{\max_{I_\nu} f}}
= \me^{f(x)-\max_{I_\nu}f}\geq
\me^{-9/\eps^2}\ .$$
\end{proof}
We now turn to estimating the size of the bad subset $E_\eps$.
\begin{lemma}
\label{lem:hilbert-transform}
  $|E_{1,\eps}|< C_9\eps\mu\delta$
\end{lemma}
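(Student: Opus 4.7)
The plan is to combine the partial-fraction expansion of $P'/P$ with Kolmogorov's weak-type $(1,1)$ estimate for the Hilbert transform. Since $f(x) = \log|P(x)|$ with $P$ a polynomial of degree $d \leq C_4\mu$ (by Lemma~\ref{lem:num-roots}), one has
\[
f'(x) = \Re\bigl(P'(x)/P(x)\bigr) = \Re\sum_{j=1}^{d} \frac{1}{x-\alpha_j},
\]
where $\alpha_j = a_j + i b_j \in B_r$ denote the roots of $P$. A direct Markov/Chebyshev argument based on $\int_I |f'|$ is too weak because, near roots $\alpha_j$ with small $|b_j|$, that integral picks up a logarithmic factor; the weak-type $(1,1)$ approach is what bypasses this.

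First I would split the sum according to the sign of $b_j$. For the upper-half-plane piece, set $\phi_+(z) := \sum_{b_j > 0} 1/(z-\alpha_j)$, which is holomorphic in the lower half-plane and decays like $|z|^{-1}$ at infinity. Its boundary trace on $\RR$ has imaginary part $v_+(x) = \sum_{b_j > 0} b_j/((x-a_j)^2 + b_j^2) \geq 0$, an $L^1(\RR)$ function of norm $\pi \cdot \#\{j : b_j > 0\} \leq \pi d$; the real part $u_+ := \Re \phi_+$ is, up to a sign, the Hilbert transform of $v_+$. Kolmogorov's weak-type estimate then gives $|\{x \in \RR : |u_+(x)| > \lambda\}| \leq C\pi d/\lambda$. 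The same bound applies to the symmetric piece $\phi_-$ built from the roots with $b_j < 0$. For the real roots I would invoke Boole's classical identity, which yields the sharp equality $|\{x \in \RR : |\sum_{b_j = 0} 1/(x-a_j)| > \lambda\}| = 2 \cdot \#\{j : b_j = 0\}/\lambda \leq 2d/\lambda$.

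Combining the three estimates gives $|\{x \in I : |f'(x)| > 3\lambda\}| \leq C' d/\lambda$ for an absolute constant $C'$, and setting $\lambda = 1/(3\eps\delta)$ together with $d \leq C_4\mu$ yields $|E_{1,\eps}| \leq C_9 \eps\mu\delta$. The main subtlety lies in the identification of $\Re\phi_\pm$ as a Hilbert transform: one must verify that $\phi_\pm$ have enough decay at infinity so that the standard conjugate-function identity $\Re\phi_\pm = \mp H(\Im\phi_\pm)$ holds on $\RR$ in the principal-value sense; after this the weak-type inequality is applied off the shelf, and the rest is partial-fraction bookkeeping.
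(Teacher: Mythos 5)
Your proposal is correct and takes essentially the same approach as the paper: both identify $f'$ as (a sum of) Hilbert transforms of nonnegative $L^1$ Poisson-kernel-type functions of total mass $O(\mu)$, and then apply the weak-type $(1,1)$ bound with $\lambda = 1/(\eps\delta)$. The only minor differences are that the paper exploits the invariance of $\Re\bigl(1/(x-\alpha)\bigr)$ under $\alpha\mapsto\bar\alpha$ to place all nonreal roots in one half-plane rather than splitting into upper- and lower-half-plane pieces, and it handles real roots by a limiting argument ($g_t(x) := f'(x-it)\to f'$ in measure) rather than by invoking Boole's identity.
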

\begin{proof}
 This follows from the properties of the Hilbert Transform.
 We imitate the proof of Lemma~5.4 in~\cite{DF} with a little more details.

We recall the definition and some basic properties of the Hilbert
Transform.
 Let $u\in L^2(\RR)$. Let $\sgn$ be the sign function on $\RR$.
 Let $\cF$ be the Fourier Transform on $L^2(\RR)$.
 Define the Hilbert Transform $\cH u$ by
 $$ \cF(\cH u) = \frac{i}{2}\sgn\cdot \cF(u) \ .$$
 %%%
 From this definition it is clear that $\cH$ is a bounded
 operator on $L^2(\RR)$.
Observe that
$$f'(x) =\sum_\alpha \Re\left(\frac{1}{x-\alpha}\right)\ .$$
% Assume first that $\forall\alpha,\ \alpha\not\in\RR$. Let
%     $$\widetilde\alpha = \left\{\begin{array}{lcr}
%      \alpha &, & \Im \alpha < 0,\\
%      \ol\alpha & , & \Im \alpha > 0 .
%      \end{array}\right.$$
%  %%
% We write $f' = \Re a$ where
%   $$a(x) = \sum_\alpha \frac{1}{x-\widetilde{\alpha}}.$$
%%%
  We may assume $\forall\alpha,\ \Im\alpha \leq0$.
  Consider first the case where $\forall\alpha,\ \alpha\not\in\RR$.
  Let $q_\alpha(x) = -\Im (1/(x-\alpha))$, and $q=\sum_\alpha q_\alpha$.
 Then, $q\in L^1(\RR)\cap L^2(\RR)$ and by Theorem~3
 in~III.2.3 of~\cite{stein-singular} $\cH q = f'$.
%%%
 From the fact that $\sgn'=2\delta_0$ and
 by basic properties of the Fourier Transform one sees
 that if $u\in L^2(\RR)$ has a compact support
 and $x\not\in\supp u$, then
 $$(\cH u)(x) = \int_\RR \frac{u(y)}{x-y} \dy \ $$
  (See also exc.~1.9 in~\cite{microlocal}
 and Theorem~5 in~III.3.3 of~\cite{stein-singular}).
%%%
 We have verified that the conditions of Theorem~3 in~I.5 of~\cite{stein-harmonic}
 are fulfilled for the Hilbert Transform.
 We conclude that the Hilbert Transform is of weak type~(1,1) and we get
   \begin{equation}
   \label{weak-type-1-1}
   |\{|f'| > 1/(\eps\delta)\}| \leq C_{10}\eps\delta\|q\|_{1}
   \leq C_{11}\eps\mu\delta \ .
   \end{equation}

Finally, we move to the case where $\exists\alpha\in \RR$. Define
$g_t(x) : = f'(x-it)$. A small calculation shows that
$g_t\to f'$ in measure as $t\to 0$. Since we can apply the
considerations above to $g_t$ we conclude
that the assertion in the lemma is true with $C_9=2C_{11}$.
\end{proof}
\begin{lemma}
$$|E_{2,\eps}| \leq C_{12}\eps\mu\delta\ .$$
\end{lemma}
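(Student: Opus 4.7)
The plan is to observe that $E_{2,\eps}$ is, by definition, the set of points in $I$ that are within (complex) distance $\eps\delta$ of some zero $\alpha$ of $P$. Since $P$ is the product over the zeros of $F$ in $B_r \subset B_{3/2}$, Lemma~\ref{lem:num-roots} tells us that the total number of such $\alpha$'s is at most $C_4\mu$.

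For each individual zero $\alpha \in \CC$, the intersection of the open disk $\{z \in \CC : |z-\alpha| < \eps\delta\}$ with the real line $\RR$ is either empty (if $|\IM\alpha| \geq \eps\delta$) or an open interval of length $2\sqrt{(\eps\delta)^2 - (\IM\alpha)^2} \leq 2\eps\delta$. In either case its one-dimensional Lebesgue measure is at most $2\eps\delta$.

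Applying the union bound over the at most $C_4\mu$ zeros yields
$$|E_{2,\eps}| \leq \sum_{\alpha \in Z_r(F)} 2\eps\delta \leq 2C_4\,\eps\mu\delta,$$
so the result follows with $C_{12} = 2C_4$. There is no real obstacle here; this lemma is a straightforward counting estimate whose only non-trivial input is the zero-count bound of Lemma~\ref{lem:num-roots}, which has already been established.
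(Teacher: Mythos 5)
Your proof is correct and is exactly the argument the authors have in mind — the paper itself just writes ``Proof is obvious'' for this lemma. You spell out the two ingredients (each zero contributes a real interval of length at most $2\eps\delta$, and Lemma~\ref{lem:num-roots} bounds the number of zeros by $C_4\mu$), which is the standard union-bound computation giving $C_{12}=2C_4$.
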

\begin{proof}
Proof is obvious.
\end{proof}
\begin{lemma}
$$|E_{3,\eps}|\leq C_{13}\eps\mu\delta\ .$$
\end{lemma}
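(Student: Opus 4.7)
The plan is a standard double-counting argument combined with the root-count bound from Lemma~\ref{lem:num-roots}. First I would observe that $A_\nu$ counts roots $\alpha \in Z_r(F)$ with $\dist(\alpha, I_\nu)<\delta$, and that for each fixed root $\alpha = a + bi$ this condition constrains $I_\nu$ severely: if $|b|\geq \delta$ then $\alpha$ contributes to no $A_\nu$, while if $|b|<\delta$ then $\dist(\alpha, I_\nu)<\delta$ forces $I_\nu$ to meet the interval $[a-\delta, a+\delta]$ of length~$2\delta$. Since the subintervals $I_\nu$ partition $I$ and each has length at least $\delta$, there are at most a bounded constant (say $3$) such $\nu$'s. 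Therefore
\[
 \sum_\nu |A_\nu| \;\leq\; 3\,\bigl|Z_r(F)\bigr| \;\leq\; 3\,C_4\,\mu,
\]
using Lemma~\ref{lem:num-roots} for the second inequality.

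Next I would apply a Markov-type inequality. If $N := |\{\nu : |A_\nu|>1/\eps\}|$, then
\[
 \frac{N}{\eps} \;<\; \sum_\nu |A_\nu| \;\leq\; 3\,C_4\,\mu,
\]
so $N < 3\,C_4\,\eps\,\mu$. Since each subinterval $I_\nu$ has length at most $2\delta$, the total measure of $E_{3,\eps}$ is at most $2\delta\cdot N < 6\,C_4\,\eps\,\mu\,\delta$, which yields the claimed bound with $C_{13}=6C_4$.

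There is no serious obstacle here; the only slightly subtle point is ensuring that the multiplicity with which each root $\alpha$ can contribute to distinct $A_\nu$'s is bounded independently of $\delta$ and $\mu$, which is exactly why the assumption $|I_\nu|\geq \delta$ in the subdivision was imposed. Everything else is immediate from Lemma~\ref{lem:num-roots} and the definition of $E_{3,\eps}$.
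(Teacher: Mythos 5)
Your argument is correct and is precisely the double-counting that the paper leaves implicit when it calls the bound "an immediate corollary" of Lemma~\ref{lem:num-roots}: each root lies within distance $\delta$ of at most boundedly many $I_\nu$ (since $|I_\nu|>\delta$), so $\sum_\nu|A_\nu|\lesssim\mu$, and Markov's inequality then bounds the number and hence the measure of the offending intervals. Nothing to add.
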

\begin{proof}
This is an immediate corollary of 
Lemma~\ref{lem:num-roots}.
\end{proof}
\begin{lemma}
   $|E_{4,\eps}| \leq C_{14}\eps\mu\delta$.
\end{lemma}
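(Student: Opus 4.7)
The plan is to bound the total mass $\sum_\nu\int_{I_\nu}|g_\nu''(x)|\dx$ by $O(\mu/\delta)$ and then combine a Markov-type counting argument with the uniform upper bound $|I_\nu|\leq 2\delta$ to obtain $|E_{4,\eps}|\leq C\eps\mu\delta$.

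First I would differentiate $g_\nu(x)=\sum_{\alpha\notin A_\nu}\log|x-\alpha|$ twice, writing $\alpha=a+ib$ and using $\log|x-\alpha|=\tfrac12\log((x-a)^2+b^2)$. A short direct calculation gives the pointwise bound $|g_\nu''(x)|\leq \sum_{\alpha\notin A_\nu}|x-\alpha|^{-2}$, with no cancellation needed.

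Summing over $\nu$ and swapping the order of summation yields
$$\sum_\nu\int_{I_\nu}|g_\nu''(x)|\dx \leq \sum_{\alpha\in Z_r(F)}\int_{U_\alpha}\frac{\dx}{|x-\alpha|^2},$$
where $U_\alpha=\bigcup\{I_\nu:\alpha\notin A_\nu\}$. By the very definition of $A_\nu$, one has $U_\alpha\subseteq\{x\in I:|x-\alpha|\geq\delta\}$. A one-variable estimate, obtained by splitting the domain into $|x-a|\leq\delta$ (where $|x-\alpha|^{-2}\leq \delta^{-2}$) and $|x-a|>\delta$ (where $|x-\alpha|^{-2}\leq (x-a)^{-2}$), gives $\int_{|x-\alpha|\geq\delta}|x-\alpha|^{-2}\dx\leq 4/\delta$, uniformly in $\Im\alpha$. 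Combined with Lemma~\ref{lem:num-roots}, which bounds the number of roots by $C_4\mu$, this gives $\sum_\nu\int_{I_\nu}|g_\nu''|\dx\leq 4C_4\mu/\delta$.

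Finally, Markov's inequality bounds the number of bad intervals (those contributing to $E_{4,\eps}$) by $\eps\delta\cdot 4C_4\mu/\delta = 4C_4\eps\mu$, and multiplying by the uniform length bound $|I_\nu|\leq 2\delta$ gives $|E_{4,\eps}|\leq 8C_4\eps\mu\delta$. The only subtle point worth checking carefully is that the $4/\delta$ bound on $\int_{|x-\alpha|\geq\delta}|x-\alpha|^{-2}\dx$ does not deteriorate as $\Im\alpha\to 0$ (approaching the real-root case); this is exactly what the two-region splitting handles. The factor of $\delta$ in the final bound arises from the interaction of two different $\delta$'s: Markov contributes $\eps\delta$, and converting a count of intervals to a measure contributes another factor $2\delta$, while the inner singular integral only loses one factor $1/\delta$.
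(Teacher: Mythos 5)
Your proof is correct and follows essentially the same route as the paper: bound $|g_\nu''|$ pointwise by $\sum_{\alpha\notin A_\nu}|x-\alpha|^{-2}$, swap the order of summation, bound the per-root integral by $O(1/\delta)$ using the $\delta$-separation built into the definition of $A_\nu$ plus the root count from Lemma~\ref{lem:num-roots}, and finish with Markov. The only difference is that you spell out the two-region splitting that makes the $O(1/\delta)$ integral bound uniform as $\Im\alpha\to 0$, a step the paper leaves implicit.
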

\begin{proof}
We observe that
$$g_{\nu}''(x) = -\sum_{\alpha\not\in A_\nu} 
\Re\left(\frac{1}{(x-\alpha)^2}\right)\ .$$
Hence,
\begin{multline*}
  \sum_{\nu} \int_{I_\nu} |g_\nu''(x)| \dx \leq
   \sum_{\nu}\sum_{\alpha\not\in A_\nu} 
\int_{I_\nu}  \frac{1}{|x-\alpha|^2}\dx
   =\\ \sum_\alpha \sum_{\nu, \alpha\not\in A_\nu} 
   \int_{I_\nu} \frac{1}{|x-\alpha|^2}\dx \leq
   \sum_\alpha \int_{|x-\alpha|>\delta} \frac{1}{|x-\alpha|^2}\dx
   \leq \mu/\delta.
\end{multline*}

   On the other hand
     $$\sum_{\nu} \int_{I_\nu} |g_\nu''(x)| \dx
     \geq \sum_{\nu, I_\nu\subseteq E_{4, \eps}} \int_{I_\nu} |g_\nu''(x)| \dx
     \geq \#\{\nu: I_\nu\subseteq E_{4, \eps}\} 1/(\delta\eps) \ .$$
   Together, we get that
   $  \#\{\nu: I_\nu\subseteq E_{4, \eps}\} \leq \eps\mu $.
   Hence, $|E_{\eps, 4}| \leq C_{14}\eps\mu\delta$.
\end{proof}
\begin{lemma}
$|E_{6, \eps}| \leq C_{15}\eps\mu\delta$
\end{lemma}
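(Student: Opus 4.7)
The plan is very short and essentially a pigeonhole/Chebyshev-type argument, once the previous four lemmas are available.

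First, I would combine Lemmas on $|E_{1,\eps}|, |E_{2,\eps}|, |E_{3,\eps}|, |E_{4,\eps}|$ to conclude that
$$|E_{5,\eps}| \leq (C_9+C_{12}+C_{13}+C_{14})\,\eps\mu\delta.$$
This bounds the total measure of the ``raw'' bad set, but $E_{6,\eps}$ is a different object: it is a union of whole intervals $I_\nu$, each chosen because it is at least half occupied by $E_{5,\eps}$.

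The key observation is then the defining inequality of $E_{6,\eps}$: for every $I_\nu \subseteq E_{6,\eps}$ we have $|I_\nu \cap E_{5,\eps}| > |I_\nu|/2$, and hence $|I_\nu| < 2\,|I_\nu \cap E_{5,\eps}|$. Summing over the disjoint intervals $I_\nu$ that make up $E_{6,\eps}$,
$$|E_{6,\eps}| = \sum_{I_\nu \subseteq E_{6,\eps}} |I_\nu| \leq 2 \sum_{I_\nu \subseteq E_{6,\eps}} |I_\nu \cap E_{5,\eps}| \leq 2\,|E_{5,\eps}|,$$
where the last step uses disjointness of the $I_\nu$. Combining with the bound on $|E_{5,\eps}|$ above yields $|E_{6,\eps}| \leq C_{15}\,\eps\mu\delta$ with $C_{15} = 2(C_9+C_{12}+C_{13}+C_{14})$.

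There is no real obstacle here; the entire content was packaged into the previous four measure estimates, and this lemma is the clean bookkeeping step that promotes a bound on $E_{5,\eps}$ (a union of arbitrary sets) to a bound on $E_{6,\eps}$ (a union of full subdivision intervals), which is what is needed so that $E_\eps = E_{5,\eps}\cup E_{6,\eps}$ respects the subdivision and the preceding interval-wise lemmas (such as Lemma~\ref{lem:g'-on-good}) can be applied on each $I_\nu \not\subseteq E_\eps$.
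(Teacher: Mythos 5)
Your proof is correct and is essentially the paper's argument: a Chebyshev-type pigeonhole step using the fact that each $I_\nu\subseteq E_{6,\eps}$ is at least half filled by $E_{5,\eps}$, together with the bound $|E_{5,\eps}|\lesssim\eps\mu\delta$ from the preceding lemmas. The paper phrases it by first counting the number $N$ of such intervals via $|I_\nu|\geq\delta$ and then multiplying by $|I_\nu|\leq 2\delta$; your version sums the interval lengths directly using disjointness, which is a slightly cleaner packaging of the same idea (and even gives a marginally better constant).
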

\begin{proof}
Let $N$ be the number of intervals $I_\nu$
for which $|I_\nu\cap E_{5,\eps}|/|I_\nu|>1/2$. We have
$$C_{16}\eps\mu\delta\leq |E_{5,\eps}| = 
\sum_\nu |I_\nu\cap E_{5,\eps}| \geq
N|I_\nu|/2\geq N\delta/2\ .$$
Hence, $N\leq 2C_{16}\eps\mu$. 
It follows that $|E_{6,\eps}|\leq 4C_{16}\eps\mu\delta$.
\end{proof}

This completes the proof of Proposition~\ref{prop:wildintervals}.

%----------------------------------------------------------------
\section{Holomorphic Functions in Small Scales - dimension $n>1$}
\label{sec:hol-dim-n}
%----------------------------------------------------------------
In this section we
prove an  analog of Proposition~\ref{prop:wildintervals} in
dimension $n>1$. We adjust the proof of Proposition~5.11
in~\cite{DF}.

Let $F$ be a holomorphic function defined in the polydisk 
$B_3^n=B_3\times\ldots\times
B_3\subseteq \CC^n$.
Let $I=[-1, 1]\subseteq \RR$ and $Q = I^n$.
Assume $F$ satisfies
\begin{equation}
\label{growth-dim-n}
 \sup_{B_2^n} |F| \leq |F(0)|\me^{C_1\mu}\ .
\end{equation}
  Given $0<\delta<C_2/\mu$, decompose $Q$ into
  subboxes $Q_\nu$ in the following way:
First, we define $n$ decompositions of $I$ into intervals 
$\{I^{(k)}_l\}$ where $\delta<|I^{(k)}_l|<2\delta$ $1\leq k\leq n$ and $1\leq l\leq N(k)$.
Given a multi-index $\nu=(\nu_1, \ldots, \nu_n)$, $1\leq \nu_k\leq N(k)$,
we set 
$Q_\nu = I^{(1)}_{\nu_1}\times\ldots\times I^{(n)}_{\nu_n}$. 
Given $x\in Q$, we denote by $Q_x$ the subbox which contains $x$.
We prove
\begin{proposition}
\label{prop:F-AvF-technic}
     Let $F$ satisfy~(\ref{growth-dim-n}) and $F\geq 0$ on $Q$.
     Assume that $F\equiv 1$ on each of the hyperplanes $z_i=0$.
   For all $\eps>0$ there exists a subset 
   $E_\eps\subseteq Q$ of measure
  $|E_\eps|\leq C_3\eps\mu\delta$ such that
\begin{equation}
\label{approximate-average}
   \frac{1}{C_4(\eps)}\leq \frac{F(x)}{\Av_{Q_x} F} \leq C_4(\eps) \quad 
\forall x\in Q\setminus E_\eps ,
\end{equation}
with $C_4(\eps)\to\infty$ as $\eps\to 0$.
\end{proposition}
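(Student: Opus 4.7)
The plan is to reduce to the one-dimensional Proposition~\ref{prop:wildintervals} by slicing $Q$ one coordinate at a time and telescoping. For $x\in Q_\nu$ define iterated partial averages
\[
F_k(x_{k+1},\ldots,x_n) := \Av_{I^{(1)}_{\nu_1}}\cdots\Av_{I^{(k)}_{\nu_k}} F(\cdot,\ldots,\cdot,x_{k+1},\ldots,x_n),
\]
so that $F_0=F$ and $F_n=\Av_{Q_\nu} F$. Write
\[
\frac{F(x)}{\Av_{Q_x} F}=\prod_{k=1}^{n}\frac{F_{k-1}(x_k,x_{k+1},\ldots,x_n)}{F_k(x_{k+1},\ldots,x_n)},
\]
so each factor depends on $x$ only through the single coordinate $x_k$.

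For each $k$ and fixed $x_{k+1},\ldots,x_n\in I$, view
\[
h(z_k) := \Av_{I^{(1)}_{\nu_1}}\cdots\Av_{I^{(k-1)}_{\nu_{k-1}}} F(\cdot,\ldots,\cdot,z_k,x_{k+1},\ldots,x_n)
\]
as a holomorphic function of $z_k\in B_3$; averaging over real parameters preserves holomorphy. I then verify the 1D growth hypothesis~(\ref{growth}) for $h$. The upper bound $\sup_{|z_k|\leq 2}|h|\leq \sup_{B_2^n}|F|\leq \me^{C_1\mu}$ follows from~(\ref{growth-dim-n}) together with $F(0)=1$ (forced by $F\equiv 1$ on $z_1=0$). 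The lower bound at $z_k=0$ is exactly where the normalization is needed:
\[
h(0)=\Av_{I^{(1)}_{\nu_1}}\cdots\Av_{I^{(k-1)}_{\nu_{k-1}}}\bigl(F|_{z_k=0}\bigr)=\Av\cdots\Av\, 1 = 1,
\]
because $F\equiv 1$ on the hyperplane $z_k=0$. Apply Proposition~\ref{prop:wildintervals} to $h$ with the $k$-th decomposition $\{I^{(k)}_l\}_l$ (whose intervals have size between $\delta$ and $2\delta$, as allowed): outside a bad set $B^k_\eps(x_{k+1},\ldots,x_n)\subseteq I$ of 1D measure at most $C\eps\mu\delta$, the ratio $h(x_k)/\Av_{I^{(k)}_{\nu_k}} h$ lies in $[1/C_3(\eps),C_3(\eps)]$ with $C_3(\eps)=\me^{11/\eps^2}$. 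The hypothesis $F\geq 0$ on $Q$ keeps $h$ real and nonnegative on $I$, so the absolute values in the 1D statement disappear.

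Set $\tilde B^k_\eps := \{x\in Q : x_k\in B^k_\eps(x_{k+1},\ldots,x_n)\}$ and $E_\eps := \bigcup_{k=1}^n \tilde B^k_\eps$. By Fubini, $|\tilde B^k_\eps|\leq 2^{n-1}C\eps\mu\delta$, so $|E_\eps|\leq n\,2^{n-1}C\eps\mu\delta$, giving the required measure bound with a dimension-dependent constant $C_3$. For $x\notin E_\eps$, multiplying the $n$ controlled ratios yields~(\ref{approximate-average}) with $C_4(\eps):=C_3(\eps)^n=\me^{11n/\eps^2}$, which tends to infinity as $\eps\to 0$.

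The main obstacle is ensuring that the 1D growth hypothesis $h(0)\neq 0$ survives successive partial averaging; this is precisely why the normalization $F\equiv 1$ on each coordinate hyperplane must be built into the statement of the proposition. A subsidiary concern is that $B^k_\eps(x_{k+1},\ldots,x_n)$ be jointly measurable in its parameters so that Fubini may be applied, which is inherited from the explicit construction of the bad set in Section~\ref{sec:hol-dim-1}.
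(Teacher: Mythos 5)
Your proof is correct and is essentially the paper's proof, just presented with the induction unrolled: the paper peels off the last coordinate (controlling the ratio $F/\Av_{I^{(n)}} F$ via the 1D proposition, then recursing on $G_l=\Av_{I^{(n)}_l}F$ in dimension $n-1$), which, when fully unrolled, is exactly your telescoping product $\prod_k F_{k-1}/F_k$ up to reversing the coordinate labels, with the same constant $C_4(\eps)=\me^{11n/\eps^2}$ and the same role for the normalization $F\equiv 1$ on the hyperplanes. The only point on which the paper is slightly more careful is the measurability you flag as a subsidiary concern: rather than relying on joint measurability of the slice-wise bad sets, the paper simply \emph{defines} $E_\eps$ as the (automatically measurable) set where the desired inequality fails, proves the inclusion $E_\eps\subseteq E'_\eps\cup E^{(n)}_\eps$, and then applies Fubini to the measurable set $E_\eps\setminus E'_\eps$; you would do well to adopt the same device rather than argue measurability of $\tilde B^k_\eps$ directly.
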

\begin{proof}
For $n=1$, the proposition reduces to Proposition~\ref{prop:wildintervals}.
%For $n>1$, let
%$$E_\eps:=\{x\in Q: |\log F(x) -\log\Av_{Q_x} F| > \me^{9n/\eps^2}\}\ .$$
For $n>1$,
let $E_\eps$ be the subset of
all $x\in Q$  for which the inequalities in~(\ref{approximate-average})
are not true with $C_4(\eps) = \me^{11n/\eps^2}$.
Given $z'=(z_1,\ldots z_{n-1})\in B_3^{n-1}$ 
we define 
$$F_{z'}(z) = F(z', z)\ .$$
$F_{z'}$ has the following properties:
\begin{itemize}
\item
$F_{z'}$ is defined in $B_3$.
\item
If $z'\in B_2^{n-1}$, then $\sup_{B_2}|F_{z'}| \leq \me^{C_1\mu}$. 
\item Let $Q'=I^{n-1}$. If $x'\in Q'$ then
$F_{x'}\geq 0$ on the interval $[-1,1]$.
\item If $x'\in Q'$ then
$F_{x'}(0)= 1$. 
\end{itemize}

We have checked that for $x'\in Q'$, 
$F_{x'}$ satisfies the conditions in Proposition~\ref{prop:wildintervals}.
Let $E_\eps(F_{x'})\subseteq I$ be the corresponding bad subset.
We let $E^{x'}_\eps := \{x'\}\times E_\eps(F_{x'})$.
Let $E^{(n)}_\eps := \cup_{x'\in Q'} E^{x'}_\eps$.
$E^{(n)}_\eps$ might not be measurable, but it intersects
every line parallel to the $x_n$-axis in a measurable set.

Given $1\leq l\leq N(n)$, $z'\in B_3^{n-1}$
define, 
  $$G_l(z') := \Av_{I_l^{(n)}} F_{z'}\ .$$
%  $$G_l(z_1, z_2, \ldots, z_{n-1}) := \frac{1}{|I^{(n)}_l|}\int_{I^{(n)}_l}
%F(z_1, \ldots, z_{n-1}, y) \dy \ .$$
It is easy to check that
\begin{itemize}
\item
$G_l$ is defined in $B_3^{n-1}$.
\item
$\sup_{B_2^{n-1}}|G_l| \leq \me^{C_1\mu}$. 
\item
$G_l(x')\geq 0$ for $x'\in Q'$.
\item
$G_l(z')= 1$ whenever one of the coordinates
$z_i=0$.
\end{itemize}

Thus, by the induction hypothesis applied to $G_{l}$ and
the decomposition $Q_{\nu'}'$ we get a corresponding bad subset 
$E_\eps(G_{l})$.
We set $E_\eps^l := E_\eps(G_l)\times I^{(n)}_{l}$.
Let $E_\eps' := \cup_{1\leq l\leq N(n)} E_\eps^l$.

\begin{claim}
\label{claim:bad-inclusion}
$E_\eps\subseteq E_\eps'\cup E^{(n)}_\eps$.
\end{claim}

\begin{proof}
Let $x\in Q_\nu\setminus (E_\eps'\cup E_\eps^{(n)})$.
Since $x'\not\in E_\eps(G_{\nu_n})$ we have
\begin{equation}
\label{G_l}
\me^{-11(n-1)/\eps^2}\leq
\frac{G_{\nu_n}(x')}{\Av_{Q'_{\nu'}} G_{\nu_n}} \leq \me^{11(n-1)/\eps^2}\ .
\end{equation}

Since $x_n\not\in E_\eps(F_{x'})$ we have
\begin{equation}
\label{F_x}
\me^{-11/\eps^2}\leq \frac{F_{x'}(x_n)}{\Av_{I^{(n)}_{\nu_n}} F_{x'}}
\leq \me^{11/\eps^2}\ .
\end{equation}
Now recall that 
$$F_{x'}(x_n) = F(x),\ \Av_{I^{(n)}_{\nu_n}} F_{x'} = G_{\nu_n}(x'), $$
and observe that ${\Av_{Q'_{\nu'}} G_{\nu_n}} = \Av_{Q_\nu} F$.
To complete the proof of Claim~\ref{claim:bad-inclusion}
we multiply~(\ref{G_l}) by~(\ref{F_x}).
\end{proof}

It only remains to check that the size of $E_\eps$ is not too big:
%$$E_\eps \subseteq (E_\eps\setminus E_\eps') \cup E_\eps'\ .$$
By Claim~\ref{claim:bad-inclusion} we know that
$E_\eps\setminus E_\eps'$ is a measurable set all of whose
intersections with lines parallel to the $x_n$-axis are measurable
sets of sizes $\leq C_5\eps\mu\delta$.
By Fubini's Theorem, we get $|E_\eps\setminus E_\eps'| \leq C_6\eps\mu\delta$.
$|E_\eps'|\leq \sum_l 2|E_\eps(G_l)|\delta \leq C_7N(n)\eps\delta^2\mu$. 
But $N(n)\leq 2/\delta$.
This completes the proof of Proposition~\ref{prop:F-AvF-technic},
since $|E_\eps|\leq |E_\eps\setminus E_\eps'|+|E_\eps'|$.
\end{proof}

We now remove the technical assumption in proposition~\ref{prop:F-AvF-technic}.
The main proposition of this section is
\begin{proposition}
\label{prop:control-quotient}
  Let $F$ satisfy~(\ref{growth-dim-n}) and $F\geq 0$ on $Q$.
  There exists a cube $R\subseteq Q$ independent of $F$
  with the following property:
  Suppose $\mu\delta <C_8$. We decompose $R$ into boxes $R_\nu$
  of sides $\delta<l_\nu^{(k)}<2\delta$. Then, there exists a subset
  $E_\eps\subseteq R$ of measure $|E_\eps|\leq C_{9}\eps\mu\delta$
  such that
\begin{equation}
\label{approximate-average-dim-n}
     \frac{1}{C_{10}(\eps)}\leq \frac{F(x)}{\Av_{R_x} F} \leq C_{10}(\eps) 
  \quad \forall x\in R\setminus E_\eps,
\end{equation}
with $C_{10}(\eps)\to\infty$ as $\eps\to 0$.
\end{proposition}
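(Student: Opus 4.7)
The plan is to deduce Proposition~\ref{prop:control-quotient} from Proposition~\ref{prop:F-AvF-technic} by replacing $F$ with an auxiliary holomorphic function $\tilde F$ that does satisfy the hyperplane normalization $\tilde F \equiv 1$ on each coordinate hyperplane, absorbing the distortion between $F$ and $\tilde F$ into the constants. The cube $R$ will be a fixed sub-cube of $Q$, say $R \subseteq [-1/2,1/2]^n$, chosen so that for every admissible $F$ one can locate a base point $a \in R$ such that $F$ does not vanish on the shifted coordinate hyperplanes $\{z_k = a_k\}$ (intersected with a slightly shrunken polydisk). Existence of such an $a$, independent of $F$, will follow from the fact that slice zero-counts in the spirit of Lemma~\ref{lem:num-roots} force the "bad" set of candidate base points to have small measure, so a fixed cube contains good points uniformly in $F$.

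Once $a$ is fixed, translate to $w = z - a$ and normalize iteratively. Set $F^{(0)}(w) := F(w+a)$ and, for $k = 1,\dots,n$,
$$
F^{(k)}(w) := \frac{F^{(k-1)}(w)}{F^{(k-1)}(w_1,\dots,w_{k-1},0,w_{k+1},\dots,w_n)}, \qquad \tilde F := F^{(n)}.
$$
A direct induction shows $\tilde F \equiv 1$ on each hyperplane $w_i = 0$ and $\tilde F(0) = 1$. The substance of this step is to verify the growth estimate $\sup_{B_2^n}|\tilde F| \leq \me^{C'\mu}$: each denominator $D_k(w) := F^{(k-1)}(w_1,\dots,w_{k-1},0,w_{k+1},\dots,w_n)$ is a holomorphic function of $n-1$ variables, and one must show it is non-vanishing and bounded below by $\me^{-C\mu}$ on the relevant polydisk. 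This is where the choice of $a$ is essential and where the "more subtle change of variables argument" mentioned in the introduction enters: after shifting to $a$ the coordinate hyperplanes $\{z_i = a_i\}$ must avoid the zero divisor of $F$ sufficiently, which is arranged by a slice-by-slice application of Lemma~\ref{lem:num-roots} combined with Harnack-type lower bounds for $|F|$ away from its zeros.

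Next, apply Proposition~\ref{prop:F-AvF-technic} to $\tilde F$ on the translated cube $R - a$, equipped with the decomposition $\{R_\nu - a\}$; this yields a bad set $\tilde E_\eps \subseteq R - a$ of measure $\leq C\eps\mu\delta$ outside of which $\tilde F(w)$ is comparable to $\Av_{R_w - a} \tilde F$ up to $C_4(\eps)$. To transfer back, use the factorization
$$
F(x) = \tilde F(x - a) \, \prod_{k=1}^n D_k(x - a),
$$
noting that $D_k$ is independent of its $k$-th coordinate. Because $\mu\delta \leq C_8$ and each $D_k$ satisfies a $\me^{C\mu}$ growth bound, the variation of $D_k$ across a box $R_\nu$ of sides between $\delta$ and $2\delta$ is only a bounded multiplicative factor; hence $\Av_{R_x} F$ is comparable to $\Av_{R_x - a} \tilde F$ times $\prod_k D_k$ evaluated near $x$, and~(\ref{approximate-average-dim-n}) follows with an enlarged constant $C_{10}(\eps)$ after possibly enlarging the bad set by a set of comparable measure.

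The principal obstacle is the joint non-vanishing and growth control of all the denominators $D_k$, uniformly in $F$. This is precisely why the base point $a$ must be chosen with some care inside a fixed sub-cube $R$, and also why Proposition~\ref{prop:wildintervals} was stated for decompositions into boxes with variable sides $\delta < l_\nu^{(k)} < 2\delta$ rather than uniform size $\delta$: the translation and the iterated division distort any uniform decomposition of $R$ by bounded amounts, and the flexibility built into Proposition~\ref{prop:wildintervals} is exactly what allows one to pull the conclusion back through this distortion without any loss beyond the stated constants.
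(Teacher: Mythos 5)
Your overall strategy is sound and matches the paper in broad outline: normalize $F$ to a function $\tilde F$ that equals $1$ on each coordinate hyperplane, apply Proposition~\ref{prop:F-AvF-technic} to $\tilde F$, and pull the estimate back through a bounded-distortion change of variables, using the built-in flexibility of variable side lengths $\delta < l_\nu^{(k)} < 2\delta$. However, the \emph{normalization step} in your proposal has a genuine gap that cannot be repaired in the form you state.

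The issue is the required non-vanishing of the denominators $D_k$. You propose to find a base point $a$ so that $F$ does not vanish on the shifted complex hyperplanes $\{z_k = a_k\}$ intersected with a shrunken polydisk, and you suggest this can be arranged by a slice-by-slice application of Lemma~\ref{lem:num-roots}. This works only in one complex variable, where the zero set is a finite point set and avoidance is a measure-zero constraint. For $n \geq 2$, the zero set of a holomorphic $F$ in the polydisk is a complex hypersurface of codimension one, and a complex hyperplane $\{z_k = a_k\}$ generically meets it in a nonempty variety of codimension two. Thus for a generic admissible $F$ there is \emph{no} choice of $a$ in a fixed sub-cube of $Q$ making all the restrictions $F|_{\{z_k = a_k\}}$ zero-free on any fixed polydisk, and no lemma counting zeros of one-variable slices can overrule this (an explicit obstruction: $F(z_1,z_2) = (z_1 - z_2)^2 + c$ with $c$ a small positive constant is admissible for suitable $\mu$, yet $F(a_1,\cdot)$ vanishes at $z_2 = a_1 \pm i\sqrt{c}$ for every $a_1$). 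Once a $D_k$ vanishes, $\tilde F$ is no longer holomorphic on the required polydisk and Proposition~\ref{prop:F-AvF-technic} does not apply.

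The paper avoids this entirely by a different normalization. Instead of shifting so that the hyperplanes dodge the zero set, one composes $F$ with a polynomial map $W$ (Lemma~5.10 of~\cite{DF}) that \emph{collapses} every coordinate hyperplane $\{x_i = 0\}$ to the single point $0$, where $F(0) \neq 0$ is forced by the growth hypothesis~(\ref{growth-dim-n}). Then $\tilde F = F\circ\hat W / F(0)$ is identically $1$ on the hyperplanes with no non-vanishing condition on any slice, and the growth bound is preserved because $\hat W(B_2^n) \subseteq B_2^n$. The preferred cube $R$ is taken inside $W(U)$ for a set $U$ on which $W$ is a diffeomorphism with bounded Jacobian, and the variable-side subdivisions (via Lemma~\ref{lem:sub-div}) absorb the distortion $W$ introduces, much as you anticipated for the transfer step. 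Your transfer-and-measure-comparison reasoning at the end is essentially correct once a valid $\tilde F$ is in hand; the gap is entirely in how $\tilde F$ is constructed.
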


\begin{proof}
We construct $R$ in the same way as in~\cite{DF}:
\begin{lemma}[{{\cite[Lemma 5.10]{DF}}}]
   There exists a map $W:\RR^n\to \RR^n$ which
extends to a map $\hat{W}:\CC^n\to \CC^n$.
   with the following properties:
    \begin{enumerate}
      \item $W$ is a polynomial map.
      \item $\hat{W}(B_3^n)\subseteq B_3^n$.
      \item $\hat{W}(B_2^n)\subseteq B_2^n$.
      \item $W(Q)\subseteq Q$.
      \item $W$ maps the hyperplanes $x_i =0$ to $0$.
      \item $W$ is a local diffeomorphism outside
             the hyperplanes $x_i=0$.
    \end{enumerate}
\end{lemma}

Let $U\subseteq Q$ be an open set which is mapped
diffeo\-morphic\-ally onto $W(U)$ and has a positive distance from
any hyperplane $x_i=0$. We let $R\subseteq W(U)$ be any
cube with sides parallel to the sides of $Q$.

%Given a subdivision $\cD$ of $Q$ and
%a subset $A\subseteq Q$, we say that $A$ is \emph{adapted} to
%$\cD$, if $A$ is contained in one of the subboxes of $\cD$. The
%next lemma is valid.
Let us now describe the bad subset $E_\eps$. We begin with
\begin{lemma}
\label{lem:sub-div} There exists a finite number of subdivisions
$\cD_i$ of $Q$ into boxes $Q_{\nu, i}$ of sides 
$\delta<|I^{(k)}_{l,i}|<2\delta$
such that every set of diameter $<\delta/2$ is contained in 
a box $Q_{\nu,i}$ for some $\nu$ and $i$.
\end{lemma}
%%%
The function $\tld{F}=F\circ \hat{W}/F(0)$ satisfies the conditions
of  Proposition~\ref{prop:F-AvF-technic}. So, given any of the
subdivisions $\cD_i$ of Lemma~\ref{lem:sub-div}
 we can find an
 exceptional set $\tld{E}_{\eps, i} \subseteq Q$ corresponding to $\tld{F}$.
 Let $\tld{E}_\eps= \cup_i \tld{E}_{\eps, i}$.
Set $E_\eps^0 = W(\tld{E}_\eps\cap U)\cap R$.

Call $R_\alpha$ \emph{$\eps$-bad} if $|E_\eps^0\cap R_\alpha|/|R_\alpha|>1/2$.
Let $B_\eps$ be the union of all $\eps$-bad $R_\alpha$'s.
Finally, set $E_\eps = E_\eps^0\cup B_\eps$.
% Call $Q_{\nu,i}$ \emph{$\eps$-bad} if 
% $|\tld{E}_\eps\cap Q_{\nu, i}|/|Q_\nu| > 1/2$.
% Let $B_{\eps, i}$ be the union of all $\eps$-bad boxes $Q_\{nu, i}$,
% and let $B_\eps=\cup_i B_{\eps, i}$.
% Finally, set $E_\eps=W((\tld{E_\eps}\cup B_\eps)\cap U)\cap R$.
%
We now estimate the size of $E_\eps$.
\begin{lemma}
  $|E_\eps| \leq C_{11}\eps\mu\delta$.
\end{lemma}
\begin{proof}[Proof of Lemma]
Since the Jacobian of the map $W$ is bounded on $U$,
and $|\tld{E}_\eps|\leq C_{12}\eps\mu\delta$ we conclude
that $|E_\eps^0|\leq C_{13}\eps\mu\delta$.
We estimate $|B_\eps|$:
\begin{equation*}
C_{13}\eps\mu\delta\geq |E_\eps^0| \geq \sum_{\mbox{bad }
R_{\alpha}\mbox{'s}} |E_\eps^0\cap R_{\alpha}| \geq
(1/2)\#(\mbox{bad } R_{\alpha}\mbox{'s}) |R_{\alpha}| \geq C_{14}|B_\eps|\ .
\end{equation*} 
We got $|E_\eps|\leq |E_\eps^0|+|B_\eps|\leq C_{11}\eps\mu\delta$.
%So $|\tld{E}_\eps\cup B_\eps| \leq 2|\tld{E}_\eps|\leq 
%C_{13}\eps\mu\delta$. Since the Jacobian
%of the map $W$ is bounded on $U$ we conclude that $|E_\eps|\leq 
%C_{11}\eps\mu\delta$.
\end{proof}

The last step is to check that~(\ref{approximate-average-dim-n})
is true:
 Let $R_\alpha$ be a subbox of $R$ with sides 
$C_{15}\delta\leq l^{(k)}_\alpha\leq 2C_{15}\delta$, where $C_{15}$ 
is small enough. Look at $\tld{R}_\alpha =W^{-1}(R_\alpha)$.
 Since $W^{-1}$ has a bounded Jacobian on $R$,
 $\tld{R}_\alpha$ is a set of diameter $<\delta/2$.
 Let $\cD$ be one of the subdivisions of 
$Q$ from Lemma~\ref{lem:sub-div} whose one of its boxes $Q_\nu$ contains 
$\tld{R}_\alpha$.

It follows from Proposition~\ref{prop:F-AvF-technic} that
 $\tld{F}(y_1)/\tld{F}(y_2) \leq {C_4(\eps)}^2$ $\forall y_1, y_2\in 
Q_\nu \setminus \tld{E}_\eps.$
 Hence, if we let $x_0\in R_\alpha\setminus E_\eps$ and $y_0=W^{-1}(x_0)$,
 then $y_0\in\tld{R}_\alpha\setminus\tld{E}_\eps$ and we obtain
\begin{multline}
\label{av>pt}
\Av_{R_\alpha} F = \frac{1}{|R_\alpha|}\int_{R_\alpha} F(x) \dx \geq
\frac{1}{|R_\alpha|}\int_{R_\alpha\setminus E_\eps} F(x) \dx = \\
\frac{1}{|R_\alpha|}\int_{\tld{R}_\alpha\setminus \tld{E}_\eps} 
\tld{F}(y)|J_W| \dy \geq
\frac{1}{C_4(\eps)^2|R_\alpha|}\int_{\tld{R}_\alpha\setminus \tld{E}_\eps} 
\tld{F}(y_0)|J_W|\dy
= \\
\frac{|R_\alpha\setminus E_\eps|}{C_4(\eps)^2|R_\alpha|} F(x_0) \geq 
F(x_0)/(2C_4(\eps)^2).
%\frac{C}{|R_\alpha|}\int_{\tld{R}_\alpha\setminus \tld{E}}
%                                  \Av_{Q_\nu}(F\circ W)|J_W| \dy \geq \\
%\frac{C}{|R_\alpha|}\int_{\tld{R}_\alpha\setminus \tld{E}}
%                                  F\circ W(y_0)|J_W| \dy \geq
%C|F(x_0)|\frac{|R_\alpha\setminus E|}{|R_\alpha|} \geq C|F(x_0)|.
\end{multline}
%The last inequality is true, since $\tld{R}_\alpha\subseteq Q_\nu$,
%$Q_\nu$ is not bad and $|R_\alpha|/|Q_\nu| > C_{22}$.

On the other hand,
\begin{multline}
\label{av<pt}
 \Av_{R_\alpha} F= \frac{1}{|R_\alpha|}\int_{R_\alpha} F(x) \dx =
  \frac{1}{|R_\alpha|}\int_{\tld{R}_\alpha} \tld{F}(y) |J_W|\dy \leq \\
   \frac{1}{|R_\alpha|}\int_{Q_\nu} \tld{F}(y) |J_W|\dy \leq \\
   \frac{C_{16}|Q_\nu|}{|R_\alpha|}\frac{1}{|Q_\nu|}\int_{Q_\nu} \tld{F}(y) \dy
   \leq C_{17}\Av_{Q_\nu} \tld{F} \leq C_{17}C_4(\eps)\tld{F}(y_0) 
= C_{18}(\eps) F(x_0).
\end{multline}

Inequalities~\eqref{av>pt} and~\eqref{av<pt} complete the proof of
Proposition~\ref{prop:control-quotient}.
\end{proof}
\section{Eigenfunctions in Small Scales on Real Analytic Manifolds}
\label{sec:eigen-small-scale}
%---------------------------------------------------------------
Let $\phi_\mu$ be an eigenfunction.
Let $V$ be a small open set in which the metric $g$ can
be developed in power series.
We identify $V$ with
a ball $B(0, \rho_0) \subseteq \RR^n$.
We prove
\begin{proposition}
  \label{prop:eigen-small-scale}
  There exists a cube $R\subseteq V$ with the following property:
  Suppose $\mu\delta<C_1$.
  We subdivide $R$ into boxes $R_\nu$ of sides 
$\delta <l^{(k)}_{\nu}< 2\delta$. Then for all $\eps>0$
  there exists a subset $E_\eps\subseteq R$ of 
  measure $|E_\eps|\leq C_2\eps\mu\delta$
  such that
  $$\frac{1}{C_3(\eps)}\leq \frac{\phi_\mu(x)^2}{\Av_{R_x} \phi_\mu^2} 
   \leq C_3(\eps) \quad
  \forall x\in R\setminus E_\eps, $$
  with $C_3(\eps)\to\infty$ as $\eps\to 0$.
\end{proposition}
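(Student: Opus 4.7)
The plan is to reduce Proposition~\ref{prop:eigen-small-scale} to Proposition~\ref{prop:control-quotient} applied to the squared eigenfunction, viewed through complexification. Since $(M,g)$ is real analytic, $\phi_\mu$ extends to a holomorphic function $\widehat{\phi}_\mu$ on a complex neighborhood of $V$. After rescaling coordinates by a fixed $\mu$-independent factor (chosen once for the chart $V$), I identify the chart with a box $Q=I^n$ sitting inside a polydisk $B_3^n\subseteq\CC^n$ on which the complexification is defined. The natural candidate to feed into Proposition~\ref{prop:control-quotient} is then $F=\widehat{\phi}_\mu^{\,2}/\widehat{\phi}_\mu(0)^2$, which is holomorphic on $B_3^n$ and nonnegative on $Q$.

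The heart of the argument is verifying the growth hypothesis~\eqref{growth-dim-n} for $F$. First I would recenter the chart at a point $x_0\in V$ where $|\phi_\mu(x_0)|$ is comparable, up to a $\mu$-independent constant, to $\sup_{B(x_0,\rho_1)}|\phi_\mu|$ on a uniformly fixed real ball; such a point exists by compactness of the closed ball. Then the Donnelly--Fefferman analytic continuation estimate for Laplace eigenfunctions on a real analytic manifold (Section~5 of~\cite{DF}) yields
\[
\sup_{B_2^n}\bigl|\widehat{\phi}_\mu\bigr|\ \leq\ \me^{C\mu}\sup_{B(x_0,\rho_1)}|\phi_\mu|\ \leq\ \me^{C'\mu}|\phi_\mu(x_0)|.
\]
Squaring and recentering so that $x_0=0$ gives $\sup_{B_2^n}|F|\leq|F(0)|\me^{C_1\mu}$, which is exactly~\eqref{growth-dim-n}.

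With the hypotheses of Proposition~\ref{prop:control-quotient} established, I invoke it to obtain a cube $R$ (depending only on the chart, not on $\phi_\mu$) such that for any admissible subdivision $\{R_\nu\}$ with $\delta<l^{(k)}_\nu<2\delta$ and any $\eps>0$, there is an exceptional set $E_\eps\subseteq R$ with $|E_\eps|\leq C_9\eps\mu\delta$ outside which
\[
\frac{1}{C_{10}(\eps)}\ \leq\ \frac{F(x)}{\Av_{R_x} F}\ \leq\ C_{10}(\eps).
\]
Since the normalizing constant $\widehat{\phi}_\mu(0)^2$ cancels in the quotient, and since the $\mu$-independent rescaling of coordinates at the start distorts $\Vol$ and $\delta$ only by fixed constants, this immediately yields the conclusion of Proposition~\ref{prop:eigen-small-scale} for $\phi_\mu^{\,2}$, after absorbing constants into $C_2$ and $C_3(\eps)$.

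The main obstacle is the growth step: choosing the center $x_0$ uniformly well and importing the analytic continuation bound with a clean constant $C_1$. Everything else is a faithful transcription through the rescaling and the normalization, together with a direct citation of Proposition~\ref{prop:control-quotient}.
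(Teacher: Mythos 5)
Your proof follows essentially the same route as the paper: complexify $\phi_\mu$ via the Donnelly--Fefferman analytic continuation lemma, recenter at a well-chosen point to verify the growth hypothesis~\eqref{growth-dim-n}, and then apply Proposition~\ref{prop:control-quotient} to the square of the continuation. One imprecision worth flagging: you assert that a suitable center $x_0$ exists ``by compactness of the closed ball,'' but compactness only gives you a point where $|\phi_\mu|$ attains its maximum on some inner ball; the inequality $\sup_{B(x_0,\rho_1)}|\phi_\mu|\leq\me^{C'\mu}|\phi_\mu(x_0)|$ that you then write down is the Donnelly--Fefferman growth bound (the paper's Theorem~\ref{thm:df-growth}), a genuine theorem and not a soft compactness fact --- the paper's proof makes this dependence explicit by choosing $x_0$ to maximize $\phi_\mu$ on an inner ball $B(0,\rho_2)$ and then invoking that theorem.
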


\begin{proof}
We consider an analytic continuation of $\phi_\mu$.
In order to avoid confusion we denote by $B^\RR$, $B^\CC$ balls
in $\RR^n, \CC^n$ respectively.
\begin{lemma}[{\cite[Lemma 7.3]{DF}}]
\label{lem:analytic-continuation}
$\phi_\mu|_{B^\RR(0, \rho_0)}$ has an analytic continuation $F$
defined on $B^\CC(0 ,\rho_1)$
for some $\rho_1 <\rho_0$.
Moreover, the function $F$ satisfies
$$\sup_{B^\CC(0, \rho_1)} |F| 
\leq \me^{C_4\mu}\sup_{B^\RR(0, \rho_0)} |\phi_\mu|\ .$$
\end{lemma}
The crucial point is that the domain to which the 
function $\phi_\mu$ can be continued is independent of $\mu$.

Let $\rho_2 = \rho_1/C_5$ with $C_5$ large so that
the polydisk $B_{2\rho_2}^n\subseteq B^\CC(0, \rho_1)$.
We now recall the Donnelly-Fefferman Growth Bound
\begin{theorem}
\label{thm:df-growth}
$$\sup_{B^\RR(0, \rho_0)} |\phi_\mu| \leq \me^{C_6(\rho_0/\rho_2)\mu} 
\sup_{B(0, \rho_2)} \phi_\mu\ .$$
\end{theorem}
Lemma~\ref{lem:analytic-continuation} and Theorem~\ref{thm:df-growth}
give
$$\sup_{B_{2\rho_2}^n} |F| \leq 
\me^{C_7\mu} \sup_{B(0, \rho_2)} \phi_\mu\ .$$
Now, shift the coordinate system to be centered on
the point $x\in B(0, \rho_2)$ for which 
$\phi_\mu(x) = \sup_{B(0, \rho_2)} \phi_\mu$.
We get that 
$$\sup B_{\rho_2}^n |F| \leq \me^{C_7\mu} |F(0)|\ .$$

Hence, we can conclude the proof by applying 
Proposition~\ref{prop:control-quotient} to $F^2$.
\end{proof}

We need a slightly different version of this proposition.
We say that $R_\alpha$ touches $R_\beta$ if they have at least one vertex
in common. Each box $R_\alpha$ touches at most $3^n$ boxes.
Let us denote by $R_\alpha^*$ the union of the box $R_\alpha$ with
all boxes which touch $R_\alpha$.
There exist $3^n$ subdivisions $\cD_i$ of $R$
such that each box of $\cD_i$ is equal to $R_\alpha^*$ for some $\alpha$.
Let $E_\eps=\cup_i E_{\eps, i}$ where  $E_{\eps, i}$ is the bad subset 
corresponding to the
subdivision $\cD_i$ according to Proposition~\ref{prop:eigen-small-scale}.
$|E_\eps|\leq C_{8}\eps\mu\delta$. 
These considerations prove the following version of
Proposition~\ref{prop:eigen-small-scale}.
\begin{proposition}
  \label{prop:eigen-small-scale-v2}
  There exists a cube $R\subseteq V$ with the following property:
  Suppose $\mu\delta<C_{9}$.
  We subdivide $R$ into boxes $R_\nu$ of sides 
$\delta <l_\nu^{(k)} <2\delta$.
  There exists a subset 
$E_{\eps}\subseteq R$ of measure $|E_{\eps}|\leq C_{8}\eps\mu\delta$
  such that
  $$\frac{1}{C_{10}(\eps)}\leq \frac{\phi_\mu(x)^2}{\Av_{R_x^*} \phi_\mu^2}
  \leq C_{10}(\eps)
  \quad\forall x\in R\setminus E, $$
with $C_{10}(\eps)\to\infty$ as $\eps\to 0$.
\end{proposition}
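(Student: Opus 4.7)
The idea is to obtain this variant directly from Proposition~\ref{prop:eigen-small-scale} itself, applied not to the original subdivision $\{R_\nu\}$ but to $3^n$ carefully chosen coarser subdivisions whose cells are exactly the ``fat boxes'' $R_\alpha^*$. In this way, the average $\Av_{R_x^*}\phi_\mu^2$ appearing in the statement can be recognized as an ordinary cell average for one of the coarser decompositions, so that Proposition~\ref{prop:eigen-small-scale} applies verbatim.

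Concretely, for each multi-index $i=(i_1,\ldots,i_n)\in\{0,1,2\}^n$ I would construct a subdivision $\cD_i$ of $R$ by grouping three consecutive small boxes $R_\nu$ in each coordinate direction, with the phase shift in the $k$-th direction determined by $i_k$. Each cell of $\cD_i$ is then a product of three consecutive $R_\nu$'s and has side lengths in $[3\delta,6\delta]$. The key combinatorial observation is that every fat box $R_\alpha^*$ appears as a cell of exactly one $\cD_i$, namely the one whose phase matches the residues of the components of $\alpha$ modulo $3$.

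The next step is to apply Proposition~\ref{prop:eigen-small-scale} to each $\cD_i$, with $\delta$ replaced by $\delta':=3\delta$. Choosing $C_9=C_1/3$ preserves the small-scale hypothesis, and the side-length condition $\delta'<l<2\delta'$ is automatic. This produces bad sets $E_{\eps,i}\subseteq R$ with $|E_{\eps,i}|\leq 3C_2\eps\mu\delta$, so $E_\eps:=\bigcup_i E_{\eps,i}$ has measure at most $3^{n+1}C_2\eps\mu\delta$, which matches the claimed bound up to the constant $C_8$. For any $x\in R\setminus E_\eps$, the point $x$ is simultaneously good for every $\cD_i$; selecting the one for which $R_x^*$ is a cell gives the desired two-sided comparison between $\phi_\mu(x)^2$ and $\Av_{R_x^*}\phi_\mu^2$, with $C_{10}(\eps)$ equal to the $C_3(\eps)$ from Proposition~\ref{prop:eigen-small-scale}.

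The only delicate point I anticipate lies near the boundary of $R$, where a cell of $\cD_i$ may be built from fewer than three consecutive $R_\nu$'s and hence may violate the side-length range in one direction. This can be handled by shrinking $R$ by a boundary layer of width $O(\delta)$, since the removed region is of measure $O(\delta)$ which is harmlessly absorbed into $E_\eps$, or alternatively by noting that the proof of Proposition~\ref{prop:eigen-small-scale} tolerates a modest relaxation of the side-length bound. Neither obstacle is serious, and this ``grouping into triples'' trick seems to be the natural way to pass from cell averages to fat-cell averages.
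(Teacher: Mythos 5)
Your proposal is correct and follows essentially the same route as the paper: the paper likewise derives this variant from Proposition~\ref{prop:eigen-small-scale} by introducing $3^n$ coarser subdivisions $\cD_i$ of $R$ whose cells are exactly the boxes $R_\alpha^*$, applying Proposition~\ref{prop:eigen-small-scale} to each, and taking the union of the resulting bad sets; your explicit ``phase-shifted triples'' construction is precisely the one the paper leaves implicit. The only small slip is in your first boundary remedy: a layer of width $O(\delta)$ has measure $O(\delta)$, which is not in general $O(\eps\mu\delta)$ (this would require $\eps\mu\gtrsim 1$), so it cannot simply be absorbed into $E_\eps$; your alternative of tolerating a relaxed side-length range near $\partial R$ is the correct fix and is what the argument actually needs.
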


\section{Good Boxes - Bad Boxes}
\label{sec:good-bad}
%====================================
Let $F$ be a nonnegative function defined on a cube $R$.
Let $\Dl$ be a subdivision of $R$.
We divide the boxes $R_\nu$ into
\emph{good} and \emph{bad}. 
We show that in a vicinity of a good
box we have a bounded $L^2$-growth, 
and that the geometry is under control.
We show that the proportion
of bad boxes is small.

We always assume that the sides of all boxes of a subdivision
are of comparable sizes. Moreover, we assume that any
two boxes $R_{\nu^1}, R_{\nu^2}$
satisfy 
$$\frac{\max\side(R_{\nu^1})}{\min\side(R_{\nu^2})}\leq 5\ .$$
We recall that $R_\nu^*$ denotes the union of $R_\nu$ with
its $3^n-1$ neighbors.

\begin{definition}
\label{def:approximate-average}
Let $E\subseteq R$ be such that
$$\frac{1}{A}\leq\frac{F(x)}{\Av_{R_\nu^*} F} \leq A
\quad\forall\nu\ \forall x\in R_\nu\setminus E\ .$$
We say that \emph{$(F, \Dl, E, A)$ is true}.
\end{definition}

\begin{definition}
\label{def:goodness}
Given $E\subseteq R$,
we say that $R_\nu$ is \emph{$E$-good} if
$|E\cap R_\nu|/|R_\nu| < \omega_n 10^{-2n}$,
where $\omega_n$ is the volume of the unit ball in $\RR^n$.
Otherwise $R_\nu$ is called \emph{$E$-bad}.
\end{definition}

The next lemma shows that in the vicinity of any good box 
we have bounded growth.
$2Q$ denotes a box concentric with $Q$, whose
sides are parallel to the sides of $Q$ and twice as large.
\begin{lemma}
\label{lem:growth-on-good-balls}
Suppose that $(F, \Dl, E, A)$ is true.
Let $R_\nu$ be $E$-good. Let $B\subseteq R_\nu$ be a ball such that
$2B\subseteq R_\nu^*$ and whose radius $r\geq\side(R_\nu)/20$.
Then,
 \begin{equation*}
% \label{growth-on-good-cubes}
   \int_{B} F \dx \geq C_1 A^{-1}\int_{2B} 
F \dx
 \end{equation*}
\end{lemma}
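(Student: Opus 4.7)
The plan is a direct volumetric argument. On $R_\nu \setminus E$ the pinching in Definition~\ref{def:approximate-average} gives $F(x) \geq A^{-1} \Av_{R_\nu^*} F$, while $\int_{2B} F$ is controlled from above by $|R_\nu^*|\cdot \Av_{R_\nu^*} F$ because $2B \subseteq R_\nu^*$. So the whole content of the lemma is to show that $B$ contains a definite proportion of good (i.e.\ non-$E$) points.

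First I would verify that volume comparison. Since $B\subseteq R_\nu$ and $R_\nu$ is $E$-good,
\[
|B\cap E|\leq |R_\nu\cap E|<\omega_n 10^{-2n}|R_\nu|.
\]
The hypothesis $r\geq \side(R_\nu)/20$ gives $|B|=\omega_n r^n\geq \omega_n \side(R_\nu)^n/20^n$. Under the bounded-aspect-ratio assumption, $|R_\nu|$ and $\side(R_\nu)^n$ are comparable up to a factor at most $5^n$, so $|B|\geq c_n' |R_\nu|$. The constants in Definition~\ref{def:goodness} are tuned precisely so that $\omega_n 10^{-2n}|R_\nu|$ is only a small fraction of $|B|$: comparing $\omega_n 10^{-2n}$ with $\omega_n/20^n=\omega_n/(2^n 10^n)$ yields $|R_\nu\cap E|\leq (1/5)^n |B|\leq (1/2)|B|$, and hence
\[
|B\setminus E|\geq c_n |B|
\]
for a dimensional constant $c_n>0$.

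Combining the two sides of the argument: on $B\setminus E\subseteq R_\nu\setminus E$ the pinching gives
\[
\int_B F\dx\geq \int_{B\setminus E} F\dx\geq A^{-1}|B\setminus E|\cdot \Av_{R_\nu^*} F,
\]
while the inclusion $2B\subseteq R_\nu^*$ gives
\[
\int_{2B} F\dx\leq \int_{R_\nu^*} F\dx = |R_\nu^*|\cdot \Av_{R_\nu^*} F.
\]
Dividing one inequality by the other,
\[
\int_B F\dx\geq A^{-1}\,\frac{|B\setminus E|}{|R_\nu^*|}\int_{2B} F\dx.
\]
Finally $R_\nu^*$ is a union of at most $3^n$ boxes each of volume comparable (via the bounded-aspect-ratio assumption) to $|R_\nu|$, and $|B\setminus E|\geq c_n|B|\geq c_n''|R_\nu|$, so the ratio $|B\setminus E|/|R_\nu^*|$ is bounded below by a dimensional constant $C_1$, yielding the claim.

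There is no genuine obstacle; the only point worth a moment's attention is the arithmetic check that the constant $\omega_n 10^{-2n}$ in the definition of $E$-good is small enough relative to the $20^{-n}$ coming from the radius hypothesis $r\geq\side(R_\nu)/20$. This is by design: the exponents $10^{-2n}$ and $20^{-1}$ in Definitions~\ref{def:goodness} and the statement of the lemma are chosen precisely so that the good points in $R_\nu$ cannot concentrate outside $B$.
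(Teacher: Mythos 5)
Your argument is exactly the paper's: restrict to $B\setminus E$, apply the pinching from Definition~\ref{def:approximate-average}, bound $\int_{2B}F$ by $\int_{R_\nu^*}F$, and then check that $|B\setminus E|/|R_\nu^*|$ is bounded below using $r\geq\side(R_\nu)/20$, the $E$-goodness threshold $\omega_n10^{-2n}$, and the bounded aspect ratio. The proof is correct and follows the paper's line step for step.
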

\begin{proof}
\begin{multline*}
  \int_{B} F(x) \dx \geq
  \int_{B\setminus E} F(x) \dx \geq
  A^{-1}\int_{B\setminus E} \Av_{R_\nu^*} F \dx  = \\
  A^{-1}\frac{|B\setminus E|}{|R_\nu^*|} \int_{R_\nu^*} 
F \dx \geq
 A^{-1}\frac{|B|-|E\cap R_\nu|}{|R_\nu^*|}\int_{2B} F \dx \geq
 \\  \omega_n(20^{-n}-10^{-2n})A^{-1} \frac{|R_\nu|}{|R_\nu^*|}
\int_{2B} F \dx \geq
C_1 A^{-1} \int_{2B} F \dx\ .
\end{multline*}
\end{proof}
%$2Q$ denotes a box concentric with $Q$, whose
%sides are parallel to the sides of $Q$ and twice as large.
%\begin{lemma}
%\label{lem:growth-on-good-I}
%Suppose that $(F, \Dl, E, A)$ is true.
%Let $R_\nu$ be $E$-good. Let $Q$ be a cube such that
%$2Q\subseteq R_\nu^*$ and
%$\side(Q)/\side(R_\nu)\geq 2/n$. 
%Then,
% \begin{equation}
% \label{growth-on-good-cubes}
%   \int_{Q} F \dx \geq C_{11}A^{-1}\int_{2Q} 
%F \dx
% \end{equation}
%\end{lemma}
%\begin{proof}
%\begin{multline*}
%  \int_{Q} F(x) \dx \geq
%  \int_{Q\setminus E_\eps} F(x) \dx \geq
%  A^{-1}\int_{Q\setminus E_\eps} \Av_{R_\nu^*} F \dx  = \\
%  A^{-1}\frac{|Q\setminus E_\eps|}{|R_\nu^*|} \int_{R_\nu} 
%F \dx \geq
% A^{-1}\frac{|Q|-|E\cap R_\nu|}{|R_\nu^*|}\int_{2Q} F \dx \geq
% \\  C_5A^{-1} {n}^{-n} \frac{|R_\nu|}{|R_\nu^*|}
%\int_{2Q} F \dx \geq
%CA^{-1} \int_{2Q} F \dx\ .
%\end{multline*}
%\end{proof}
%The following is a version of Lemma~\ref{lem:growth-on-good-I}
%for balls. The proof is identical. We formulate it in order
%to be careful of the order of radius for which it is true.
%\begin{lemma}
%\label{lem:growth-on-good-balls}
%Suppose that $(F, \Dl, E, A)$ is true.
%Let $R_\nu$ be $E$-good.
%Let $B$ be a ball such that $2B\subseteq R_\nu^*$ 
%and whose radius $r\geq \side(R_\alpha)/10$.
%Then
% \begin{equation}
% \label{growth-on-good-balls}
%   \int_{B} \phi_\mu^2 \dx \geq C_{11}A^{-1}\int_{2B} 
%\phi_\mu^2 \dx
% \end{equation}
%\end{lemma}

The next proposition shows that the geometry
in good cubes is controlled.
\begin{proposition}
\label{prop:geometry-control}
Suppose $(\phi_\mu^2, \Dl, E, A)$ is true.
Let $R_\nu$ be $E$-good.
Let $B=B(o, 2r)\subseteq R_\nu$ be a ball
of radius $2r$, with $r>\side(R_\nu)/20$. 
Let $B^+ = B\cap\{\phi_\mu>0\}$. Similarly, define $B^-$.
Suppose $\phi_\mu(o)=0$.
Then $$\frac{1}{C_2(\log A)^{n-1}} 
\leq \frac{\Vol(B^+)}{\Vol(B^-)} \leq C_2(\log A)^{n-1}\ .$$
\end{proposition}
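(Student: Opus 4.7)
The strategy proceeds in three steps: bound the vanishing order of $\phi_\mu$ at $o$ using the doubling supplied by Lemma~\ref{lem:growth-on-good-balls}; approximate $\phi_\mu$ near $o$ by its leading Taylor homogeneous term, which is a harmonic polynomial; and invoke a classical volume-ratio estimate for that polynomial.

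For the first step, the hypothesis $B=B(o,2r)\subseteq R_\nu$ together with $r>\side(R_\nu)/20$ ensures that $2B(o,s)\subseteq R_\nu^*$ for every $s\leq r$, so Lemma~\ref{lem:growth-on-good-balls} applies to the pair of concentric balls $B(o,s)$ and $B(o,2s)$ and gives
\begin{equation*}
\int_{B(o,2s)}\phi_\mu^2\dx\;\leq\;C' A\int_{B(o,s)}\phi_\mu^2\dx\qquad(s\leq r/2).
\end{equation*}
If $\phi_\mu$ vanishes at $o$ to exact order $k$, then $\int_{B(o,s)}\phi_\mu^2\sim s^{n+2k}$ as $s\to 0$, so iterating the doubling inequality downward and comparing the two estimates forces $4^{k}\leq C'' A$, whence $k\leq C'''\log A$.

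For the second step, expand $\phi_\mu=P_k+O(|x|^{k+1})$ near $o$, where $P_k$ is homogeneous of degree $k$ and not identically zero. Since $\Delta\phi_\mu+\mu^2\phi_\mu=0$ and $\mu^2\phi_\mu$ vanishes to order $k$ at $o$ while $\Delta P_k$ is homogeneous of degree $k-2$, one obtains $\Delta P_k\equiv 0$, i.e. $P_k$ is a harmonic polynomial. Using the analytic continuation provided by Lemma~\ref{lem:analytic-continuation} together with the doubling from step one, $\phi_\mu$ and $P_k$ can be compared on $B$ in a measure-theoretic sense: outside a subset of $B$ of small relative measure, the sign of $\phi_\mu$ coincides with that of $P_k$.

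For the third step, invoke the classical fact that for a nonzero homogeneous harmonic polynomial $P_k$ of degree $k$ on $\RR^n$ and a ball $B$ centered at the origin,
\begin{equation*}
\frac{1}{C_n k^{n-1}}\;\leq\;\frac{\Vol(\{P_k>0\}\cap B)}{\Vol(\{P_k<0\}\cap B)}\;\leq\;C_n k^{n-1}.
\end{equation*}
This is proved by passing to $S^{n-1}$, where $P_k$ becomes a spherical harmonic of degree $k$: its zero set has $(n-2)$-dimensional measure bounded by a constant multiple of $k$ (in the spirit of Theorem~\ref{thm:nodal-volume}), and the spherical isoperimetric inequality then forces each nodal domain on $S^{n-1}$ to have area at least of order $k^{-(n-1)}$. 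Substituting $k\leq C'''\log A$ and absorbing the bounded multiplicative loss from step two into the final constant yields the asserted bound $C_2(\log A)^{n-1}$.

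\textbf{Main obstacle.} The delicate step is the polynomial approximation of step two: on a ball of wavelength size $r\sim 1/\mu$ one cannot expect pointwise closeness of $\phi_\mu$ to any polynomial of degree only $\sim\log A$, so the comparison must be carried out measure-theoretically, exploiting that $R_\nu$ is $E$-good (hence $E\cap R_\nu$ has very small density in $R_\nu$) and combining this with the $L^2$-growth control of Lemma~\ref{lem:growth-on-good-balls}. The Brunn--Minkowski inequality flagged in the introduction is the natural tool for converting this measure comparison into the desired volume estimate for $B^\pm$. A secondary subtlety is squeezing out the sharp power $n-1$ (rather than $n$) in the polynomial volume bound, which genuinely uses the isoperimetric structure of the sphere.
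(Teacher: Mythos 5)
Your approach diverges substantially from the paper's. The paper's proof is short: after rescaling $B$ to the unit ball, $\phi_\mu$ becomes a solution $\vphi$ of an elliptic equation $-\partial_i(a^{ij}\partial_j\vphi)-(2r\mu)^2 q\vphi=0$ with uniformly bounded coefficients; Lemma~\ref{lem:growth-on-good-balls} combined with elliptic regularity upgrades the $L^2$-growth bound to $\sup_{B_{3/4}}|\vphi|\leq C A^{1/2}\sup_{B_{1/2}}|\vphi|$; and then the heavy lifting is delegated entirely to Theorem~4.7 of \cite{Man-LAIRND} (a prior result of the second author), which directly gives $\Vol(B_1\cap\{\vphi>0\})/\Vol(B_1)\geq C(\log A)^{-(n-1)}$ for any solution with $\vphi(0)=0$ satisfying such a growth bound. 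Symmetry finishes. You are, in effect, attempting to \emph{re-prove} that cited theorem from scratch.

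This is where the gap lies, and you yourself flag it: step two, the measure-theoretic comparison of $\phi_\mu$ with its leading Taylor term $P_k$ on the full ball $B$, is not justified. The ball has radius comparable to the wavelength $1/\mu$; the expansion $\phi_\mu=P_k+O(|x|^{k+1})$ controls the sign of $\phi_\mu$ only in a much smaller neighborhood of $o$ where the error is dominated by $P_k$, and that neighborhood can shrink badly as $k$ grows. The proposed fix --- that $E$-goodness and Brunn--Minkowski should convert this into a measure comparison on all of $B$ --- does not obviously close the gap: $E$-goodness controls where the pointwise-versus-average inequality holds, and Brunn--Minkowski is used elsewhere in the paper for estimating tubular-neighborhood volumes, but neither says anything about the sign of $\phi_\mu$ agreeing with that of $P_k$ away from $o$. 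Proving such a sign agreement is precisely the hard content of the local asymmetry estimate you are trying to reconstruct. A secondary inaccuracy: Lemma~\ref{lem:growth-on-good-balls} requires the ball's radius to be at least $\side(R_\nu)/20$, so it does not apply to all $s\leq r/2$ as you claim; the conclusion $k\lesssim\log A$ is salvageable (a doubling bound at one scale does control vanishing order, via frequency-function monotonicity for solutions of elliptic equations), but that argument is not the one you give. Steps one and three are otherwise reasonable in outline; the essential missing piece is a rigorous substitute for step two, which is exactly what the citation to \cite{Man-LAIRND} supplies.
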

\begin{proof}
After rescaling the ball $B$ to the unit ball,
$\phi_\mu$ becomes a solution $\vphi$ of an elliptic equation
\begin{equation}
\label{L-sol}
 -\partial_i(a^{ij}\partial_j\vphi)-(2r\mu)^2q\vphi=0\ .
\end{equation}
It's important to observe that the coefficients are bounded
independently of $\mu$, and the zero order coefficient is small.
Thus, by Lemma~\ref{lem:growth-on-good-balls} and by elliptic regularity
\begin{equation}
\label{max-growth-bound}
  \sup_{B_{3/4}}{|\vphi|} \leq C_3\|\vphi\|_{L^2(B_1)}
 \leq C_4 A^{1/2}\|\vphi\|_{L^2(B_{1/2})} \leq 
C_5 A^{1/2}\sup_{B_{1/2}}{|\vphi|}\ .
\end{equation}
Recall now
\begin{theorem}[{\cite[Theorem 4.7]{Man-LAIRND}}]
Let $\vphi$ satisfy equation~(\ref{L-sol}) in the unit ball $B_1$.
Suppose $\vphi(0)=0$ 
%$\{\vphi=0\}\cap B_{1/2}\neq\emptyset$ 
and satisfies~(\ref{max-growth-bound}).
Then
\begin{equation}
\label{growth-vs-positivity}
  \frac{\Vol(B_1\cap\{\vphi>0\})}{\Vol(B_1)} \geq \frac{C_6}{(\log A)^{n-1}} \ .
\end{equation}
\end{theorem}
By symmetry, we have a lower bound also on 
$\Vol(B_1\cap\{\vphi<0\})/\Vol(B_1)$. Thus, we get upper and lower bounds
on the ratio between the volumes of the positivity
and the negativity sets of $\vphi$.
\end{proof}

The last lemma in this section shows that the proportion
of bad cubes is small.
\begin{lemma}
\label{lem:bad-proportion}
$$\frac{\#(E\mbox{-bad boxes})}{\#(\mbox{all boxes})} \leq 
C_{7}|E|\ .$$
\end{lemma}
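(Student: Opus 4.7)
The plan is to argue by a simple double counting / averaging over the boxes of the subdivision $\Dl$. The defining property of an $E$-bad box gives a definite lower bound on the amount of $E$ it contains, and since the boxes are (essentially) disjoint, summing these lower bounds over bad boxes yields an upper bound on the number of bad boxes in terms of $|E|$. The ratio is then obtained by dividing by a lower bound on the total number of boxes, using that each box has side comparable to $\delta$.

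In more detail, first I would observe that, by Definition~\ref{def:goodness}, every $E$-bad box $R_\nu$ satisfies
\begin{equation*}
|E\cap R_\nu|\;\geq\;\omega_n 10^{-2n}\,|R_\nu|\;\geq\;\omega_n 10^{-2n}\,\delta^n,
\end{equation*}
since each side of each $R_\nu$ is at least $\delta$. Next, because the boxes of $\Dl$ are disjoint, summing over all $E$-bad boxes gives
\begin{equation*}
\#(E\text{-bad boxes})\cdot\omega_n 10^{-2n}\delta^n
\;\leq\;\sum_{R_\nu\text{ bad}}|E\cap R_\nu|\;\leq\;|E|,
\end{equation*}
which bounds the numerator of the ratio by $C\,|E|/\delta^n$.

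For the denominator, since all sides of all boxes are at most $2\delta$, each box has volume at most $(2\delta)^n$, so $\#(\text{all boxes})\geq |R|/(2\delta)^n$. Dividing the two estimates and absorbing the fixed constant $|R|$ into the constant yields
\begin{equation*}
\frac{\#(E\text{-bad boxes})}{\#(\text{all boxes})}
\;\leq\;\frac{(2\delta)^n}{|R|}\cdot\frac{|E|}{\omega_n 10^{-2n}\delta^n}
\;=\;C_7\,|E|,
\end{equation*}
as desired. There is no real obstacle here; the only small points to check are that the assumption that all sides of all boxes lie in $[\delta,2\delta]$ (or at least satisfy the comparability stated at the top of Section~\ref{sec:good-bad}) gives both the lower bound on $|R_\nu|$ used for the numerator and the upper bound on $|R_\nu|$ used for the denominator, and that $|R|$ is a fixed quantity depending only on the cube $R$ from Section~\ref{sec:eigen-small-scale}, not on $\mu$ or $\delta$.
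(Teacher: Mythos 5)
Your argument is correct and is essentially the paper's own proof: both use the defining lower bound $|E\cap R_\nu|\ge \omega_n 10^{-2n}|R_\nu|$ for each $E$-bad box, sum over the (disjoint) bad boxes to bound their number by a multiple of $|E|/\delta^n$, and then observe $\#(\text{all boxes})\gtrsim |R|/\delta^n$ with $|R|$ a fixed constant. The paper just does the bookkeeping in one displayed chain of inequalities rather than splitting off the $\delta^n$ comparison, so there is nothing substantive to distinguish the two.
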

\begin{proof}
\begin{equation*}
|E| \geq \sum_{\mbox{bad } R_\nu\mbox{'s}} |E\cap R_\nu|
\geq \omega_n {10}^{-n} \#(\mbox{bad } R_\nu\mbox{'s}) |R_\nu| \geq
C_{8}\frac{\#(\mbox{bad } R_\nu\mbox{'s})}{\#(\mbox{all boxes})}\ .
\end{equation*}
\end{proof}

%\begin{lemma}
%\label{lem:num-bad}
%Let $\phi_\mu$ be an eigenfunction. Let 
%$R$ be a cube. Let $\Dl$ and $E_\eps$ be the subdivision 
%and the bad set considered in
%Proposition~\ref{prop:eigen-small-scale-v2}.
%Then
%$$\frac{\#(E_\eps\mbox{-bad subcubes})}{\#(\mbox{all subcubes})} \leq 
%C_{12}\eps\mu\delta\ .$$
%\end{lemma}
%\begin{proof}
%\begin{multline*}
%C_{10}\eps\mu\delta\geq |E_\eps| 
%\geq \sum_{\mbox{bad } R_\alpha\mbox{'s}} |E_\eps\cap R_\alpha|
%\geq \\ {10}^{-n} \#(\mbox{bad } R_\alpha\mbox{'s}) |R_\alpha| \geq
%{10}^{-n} C_8^n \delta^n \#(\mbox{bad } R_\alpha\mbox{'s}).
%\end{multline*}
%\end{proof}
%%
%====================================
\section{Refinements of subdivisions}
\label{sec:refinements}
%====================================
In this section we analyze what happens
when we pass from a fine subdivision $\Dl_1$ to a 
subdivision $\Dl$ of whose $\Dl_1$ is a refinement.
We show, roughly, that if a box of $\Dl$ is composed of smaller good boxes
then it is also good. The results in this section are applied in the
the proof of the lower bound in Theorem~\ref{thm:nodal-tube}.

Let $\Dl^1$ be a subdivision of a cube $R$ obtained by a refinement
of $\Dl$. If the sides of every box in $\Dl$ are partitioned
into $\leq M$ intervals, we write $[\Dl:\Dl^1]\leq M$.
Let $\Dl^2$ be the subdivision of $R$ which is formed by 
taking the centers of the boxes in $\Dl^1$.

Throughout this section $F$ is a nonnegative function defined on $R$.
In the next proposition we use the terminology from 
Definitions~\ref{def:approximate-average} and~\ref{def:goodness}.
\begin{proposition}
\label{prop:refinement}
Suppose that $(F, \Dl^1, E^1, A)$ and 
$(F, \Dl^2, E^2, A)$ are true.
Let $B$ be the union of all boxes $R_\nu$ of $\Dl$
for which $R_\nu^*$ contains an $E^1$-bad box of $\Dl^1$ or
an $E^2$-bad box of $\Dl^2$.
Assume $[\Dl:\Dl^1] \leq M$. Then,
$(F, \Dl, E^1\cup E^2\cup B, C_1 A^{8M+1})$ is true.
\end{proposition}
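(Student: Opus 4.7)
The plan is to show that if $x\in R_\nu\setminus(E^1\cup E^2\cup B)$ then $F(x)/\Av_{R_\nu^*}F$ is trapped in $[C_1^{-1}A^{-(8M+1)},\,C_1 A^{8M+1}]$. The very definition of $B$ ensures that whenever $R_\nu\not\subseteq B$, every $\Dl^1$-box contained in $R_\nu^*$ is $E^1$-good and every $\Dl^2$-box contained in $R_\nu^*$ is $E^2$-good, so the approximation property of Definition~\ref{def:approximate-average} is available throughout the enlarged neighborhood. Writing $F_\alpha^{(1)}:=\Av_{(R^1_\alpha)^*}F$ and $F_\gamma^{(2)}:=\Av_{(R^2_\gamma)^*}F$, the hypotheses give $F(y)\in[A^{-1},A]\,F_\alpha^{(1)}$ for every $y\in R^1_\alpha\setminus E^1$, and similarly for the $\Dl^2$-boxes.

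The heart of the argument is a one-step chaining lemma: any two $L^\infty$-adjacent $\Dl^1$-boxes $R^1_\alpha,R^1_\beta\subseteq R_\nu^*$ (i.e., sharing at least a vertex) are straddled by a common $\Dl^2$-box $R^2_\gamma$ for which $|R^1_\alpha\cap R^2_\gamma|$ and $|R^1_\beta\cap R^2_\gamma|$ are bounded below by a dimensional multiple of $|R^1_\alpha|$. This follows directly from the construction of $\Dl^2$, which uses the centers of $\Dl^1$-boxes as its partition points, so the $\Dl^2$-box centered at a common $\Dl^1$-vertex of $R^1_\alpha$ and $R^1_\beta$ intersects each of the $2^n$ incident $\Dl^1$-boxes in a substantial portion. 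Since $R^1_\alpha$, $R^1_\beta$, $R^2_\gamma$ are all good (their exceptional masses are at most $\omega_n 10^{-2n}$ times their volumes), these intersections contain plenty of points outside $E^1\cup E^2$. Picking $y\in R^1_\alpha\cap R^2_\gamma\setminus(E^1\cup E^2)$ and $z\in R^1_\beta\cap R^2_\gamma\setminus(E^1\cup E^2)$, and applying the two hypotheses at $y$ and at $z$, chains $F_\alpha^{(1)}$ and $F_\beta^{(1)}$ through the common value $F_\gamma^{(2)}$ to yield $F_\alpha^{(1)}/F_\beta^{(1)}\in[A^{-4},A^4]$.

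Iterating the lemma along a shortest $L^\infty$-path from the $\Dl^1$-box $R^1_{\alpha(x)}$ containing $x$ to any other $\Dl^1$-box $R^1_\alpha\subseteq R_\nu^*$ -- a path of length at most $2M$ small boxes, since $R_\nu$ spans at most $M$ small boxes per coordinate and the collar $R_\nu^*\setminus R_\nu$ adds at most another $M$ -- gives $F_{\alpha(x)}^{(1)}/F_\alpha^{(1)}\in[A^{-8M},A^{8M}]$ for every $R^1_\alpha\subseteq R_\nu^*$. Goodness of each $R^1_\alpha$ yields $(2A)^{-1}F_\alpha^{(1)}\leq\Av_{R^1_\alpha}F\leq 3^n F_\alpha^{(1)}$: the upper bound from $R^1_\alpha\subseteq(R^1_\alpha)^*$ and $|(R^1_\alpha)^*|\leq 3^n|R^1_\alpha|$, and the lower bound by integrating $F\geq A^{-1}F_\alpha^{(1)}$ over $R^1_\alpha\setminus E^1$, which has measure $\geq(1-\omega_n 10^{-2n})|R^1_\alpha|$. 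Expanding $\Av_{R_\nu^*}F=\sum_\alpha(|R^1_\alpha|/|R_\nu^*|)\Av_{R^1_\alpha}F$, combining with the chain estimate, and using $F(x)\in[A^{-1},A]\,F_{\alpha(x)}^{(1)}$ (valid because $x\notin E^1$) gives the claim after absorbing the dimensional factors into $C_1$. The delicate point is the chaining accounting -- in particular, the use of $L^\infty$-adjacency (rather than mere face-adjacency) so that the path length is $O(M)$ rather than $O(nM)$; the rest is routine bookkeeping with the hypotheses.
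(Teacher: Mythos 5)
Your argument is correct and follows essentially the same route as the paper: you chain across $R_\nu^*$ by alternating between adjacent $\Dl^1$-boxes and the straddling $\Dl^2$-boxes, using goodness of all small boxes inside $R_\nu^*$ (guaranteed by $R_\nu\not\subseteq B$) to find common chaining points outside $E^1\cup E^2$, exactly the content of the paper's Lemma~\ref{lem:orbits}. The only cosmetic difference is that the paper chains point values $F(x_k)$ directly and estimates $\int_{R_\nu^*}F$ against $F(x)$ in one stroke, whereas you chain the local averages $F_\alpha^{(1)}$ and then reassemble $\Av_{R_\nu^*}F$ from the $\Av_{R_\alpha^1}F$; this costs you one extra factor of $A$ (you land on $2A^{8M+2}$ rather than $3^nA^{8M+1}$), which is immaterial for the application.
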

\begin{proof}
Let us denote the boxes of $\Dl$ by $R_\nu$,
and the boxes of $\Dl^1$ by $R_\alpha$. Let $E=E^1\cup E^2\cup B$.
\begin{lemma}
\label{lem:orbits}
For all $x\in R_\nu\setminus E$, $y\in R_\nu^*\setminus E$ 
there exists a sequence of $4M$ points 
$x=x_1, \ldots, x_{4M}= y$ in $R_\nu^*\setminus E$
such that any consecutive pair $x_k, x_{k+1}$ is
contained in a box of $\Dl^1$ or is contained in a box
of $\Dl^2$.
\end{lemma}
\begin{proof}[Proof of Lemma]
The only point to observe is that
if $R_\nu\not\subseteq E$ then
all boxes $R_\alpha$ 
contained in $R_\nu^*$ are $E^1$-good. So 
$|E^1\cap R_\alpha|/|R_\alpha| <\omega_n 10^{-2n}<2^{-n}/2$.
Similarly for $E^2$.
\end{proof}
Let $x\in R_\nu\setminus E$.
For any $y\in R_\nu^*\setminus E$, let $x_1, \ldots, x_{4M}$
be a sequence of points as in Lemma~\ref{lem:orbits}.
Since $(F, \Dl^1, E^1, A)$ and
$(F, \Dl^2, E^2, A)$ are true we have 
$F(x_k)/F(x_{k+1}) \leq A^2$, and we get
$F(x)/F(y)\leq A^{8M}$.

Now,
\begin{multline*}
\frac{1}{|R_\nu^*|}\int_{R_\nu^*} F(y) \dy \geq 
\frac{1}{|R_\nu^*|} 
\int_{R_\nu^*\setminus E} F(y) \dy \geq \\
\frac{|R_\nu^*\setminus E|}{|R_\nu^*|} \frac{F(x)}{A^{8M}} \geq 
 3^{-n}(1-2\omega_n10^{-2n})\frac{F(x)}{A^{8M}}\ .
\end{multline*}
The last inequality is true since $R_\nu$ contains
no $E^1$-bad boxes neither $E^2$-bad boxes.

Conversely, let $x\in R_\nu$ and let $J$ be the set of $\alpha$'s for which
$R_\alpha \subseteq R_\nu^*$.
For all $\alpha\in J$, let $x_\alpha\in R_\alpha\setminus (E^1\cup E^2)$.
Such points exist, since $R_\nu\not\subseteq B$.
Then,
\begin{multline*}
   \frac{1}{|R_\nu^*|}\int_{R_\nu^*} F(y) \dy = 
\frac{1}{|R_\nu^*|}\sum_{\alpha\in J} \int_{R_\alpha} F(y) \dy \leq \\
\frac{1}{|R_\nu^*|}\sum_{\alpha\in J} AF(x_\alpha)|R_\alpha^*| 
\leq 3^nA^{8M+1} F(x)\ .
\end{multline*}
\end{proof}

%%%%%%%%%%%%%%%%%%%%%%%%%%%%%%%%%%%%%%%%%%%%%%%%%%%%%%%%%%%%%%%%%%%%%
\section{Proof of the Lower Bound in Theorem~\ref{thm:nodal-tube}}
\label{sec:lower}
%------------------------------------------------------------------
First, we prove the following 
proposition which is announced in the introduction
of~\cite{DF}.  
\begin{proposition}
\label{prop:many-good-balls} There exists a finite collection of balls
$B_i=B(x_i, r)$ centered at $x_i$ of radius $r=C_1/\mu$ which satisfy
\begin{itemize}
\item[(i)] $\phi_\mu(x_i) = 0$,
\item[(ii)] their doubles $2B_i=B(x_i, 2r)$ are pairwise
disjoint,
\item[(iii)] Denote by $B_i^+$ the set $\{\phi_\mu>0\}\cap B_i$.
Similarly, we define $B_i^-$.
Then 
$$ C_2<\frac{\Vol(B_i^+)}{\Vol(B_i^-)}<C_3\ , $$
\item[(iv)] $\sum_i \Vol(B_i) > C_4\Vol(M)$.
\end{itemize}
\end{proposition}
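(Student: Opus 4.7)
The plan is to plant balls of radius $r=C_1/\mu$ around nodal points lying inside ``good'' boxes of a subdivision at a scale comparable to the wavelength. Proposition~\ref{prop:eigen-small-scale-v2} will supply that most boxes are good, Proposition~\ref{prop:dense} will place a nodal point inside each good box with enough slack around it, and Proposition~\ref{prop:geometry-control} will deliver the balanced positivity--negativity ratio of item~(iii). The difficulty is that Proposition~\ref{prop:eigen-small-scale-v2} only permits subdivisions of side $\delta<C_9/\mu$, possibly too fine to leave room for a doubled ball together with the $C/\mu$-margin furnished by Proposition~\ref{prop:dense}; this is precisely the situation handled by the refinement machinery of Section~\ref{sec:refinements}.

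Cover $M$ by finitely many real-analytic charts to each of which Proposition~\ref{prop:eigen-small-scale-v2} applies, and in each fix the cube $R$ it furnishes; the size of $R$ being independent of $\mu$, arrange that the union of the $R$'s fills a fixed positive fraction of $\Vol(M)$. Working inside one such $R$, fix $c_1<C_9$, a small $\eps_0>0$ and a large integer $M$ (all to be chosen). Let $\Dl^1$ be the subdivision of $R$ into boxes of side comparable to $c_1/\mu$, $\Dl^2$ the centers-subdivision from Section~\ref{sec:refinements}, and $\Dl$ the coarser subdivision into boxes of side $Mc_1/\mu$ with $[\Dl:\Dl^1]=M$. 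Proposition~\ref{prop:eigen-small-scale-v2} applied to $\Dl^1$ and $\Dl^2$ with parameter $\eps_0$ produces bad sets $E^1,E^2\subseteq R$ of measure $O(\eps_0)$ with $(\phi_\mu^2,\Dl^j,E^j,A)$ true and $A=C_{10}(\eps_0)$; Proposition~\ref{prop:refinement} then yields that $(\phi_\mu^2,\Dl,E,A')$ is true, with $E=E^1\cup E^2\cup B$ and $A'$ depending only on $A$ and $M$, and $|E|=O(\eps_0 M^n)$.

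Pick $M$ so that $Mc_1$ exceeds a fixed multiple of the density constant $C$ of Proposition~\ref{prop:dense}, and then $r=C_1/\mu$ in an interval (nonempty for $M$ large) whose endpoints are dictated by the side-length condition of Proposition~\ref{prop:geometry-control} and by the slack $2r+C/\mu<\side(R_\nu)/2$ needed to place a zero deep inside a coarse box. In each $E$-good coarse box $R_\nu$, the concentric subcube of half the side-length lies at distance greater than $2r+C/\mu$ from $\partial R_\nu$; by Proposition~\ref{prop:dense} it contains a zero $x_\nu$ of $\phi_\mu$, and $B(x_\nu,2r)\subseteq R_\nu$. Setting $B_\nu=B(x_\nu,r)$, item~(i) is by construction, disjointness of distinct coarse boxes forces~(ii), and Proposition~\ref{prop:geometry-control} applied to $B_\nu$ inside $R_\nu$ with constant $A'$ bounds $\Vol(B_\nu^+)/\Vol(B_\nu^-)$ two-sidedly by a constant depending only on $n$, $M$ and $\eps_0$, establishing~(iii).

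For~(iv), finally shrink $\eps_0$ so that Lemma~\ref{lem:bad-proportion} applied to $(\Dl,E)$ forces strictly less than half of the coarse boxes to be $E$-bad. Each chart then contributes at least a fixed positive fraction of its $\sim\Vol(R)(\mu/(Mc_1))^n$ coarse boxes, each producing a ball of volume $\omega_n(C_1/\mu)^n$, so the per-chart ball volume is a fixed positive fraction of $\Vol(R)$; summing over the finite chart cover gives $\sum_i\Vol(B_i)\geq C_4\Vol(M)$. The main technical obstacle is the ordering of the three parameters: $M$ must be fixed first, large relative to $C$, so that doubled balls together with the density margin fit inside the coarse boxes; $\eps_0$ must then be shrunk, of order $M^{-n}$, so that $|E|=O(\eps_0 M^n)$ stays below the threshold in Lemma~\ref{lem:bad-proportion}; and $C_1$ is chosen last in the narrow window left open by Propositions~\ref{prop:geometry-control} and~\ref{prop:dense}.
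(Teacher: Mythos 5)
Your proof is correct and follows essentially the same route as the paper: a coarse wavelength-scale subdivision $\Dl$, refined to $\Dl^1,\Dl^2$ fine enough for Proposition~\ref{prop:eigen-small-scale-v2}, with Proposition~\ref{prop:refinement} propagating goodness up to $\Dl$; a zero planted in the interior of each good coarse box via Proposition~\ref{prop:dense}; Proposition~\ref{prop:geometry-control} for item~(iii); and Lemma~\ref{lem:bad-proportion} with $\eps$ small to secure a positive proportion of good boxes for item~(iv). One small point: the paper simply works in a single chart, which already gives the fixed fraction of $\Vol(M)$ required in~(iv); if you insist on covering $M$ by several charts as you propose, you must also arrange that the chosen cubes $R$ are pairwise disjoint, since otherwise balls from different charts could violate~(ii).
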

\begin{proof}
It is enough to prove the proposition in a coordinate neighborhood $V$.
It is well known that there exists a constant $C_5$ such that
every cube of side $C_5/\mu$ contains a zero of $\phi_\mu$
(see~\cite{Bru}).
We can decompose $V$ into small cubes $R_\nu$ whose 
side is of size $\delta=3C_5/\mu$. We call this subdivision $\Dl$.
Each cube $R_\nu$ contains a zero $x_\nu$ of $\phi_\mu$ in its middle third.
We now take a refinement $\Dl^1$ of $\Dl$:
We partition each side of a cube $R_\nu$ into $M$
intervals of equal sizes, where $\mu\delta/M$ is small enough
in order to apply Proposition~\ref{prop:eigen-small-scale-v2}. We deduce
that $(\phi_\mu^2, \Dl^1, E^1_\eps, C_6(\eps))$ is true 
(cf. definitions~\ref{def:approximate-average} \&~\ref{def:goodness}).
If $\Dl^2$ is the subdivision obtained by taking
the centers of cubes belonging to $\Dl^1$,
then the same proposition gives that
$(\phi_\mu^2, \Dl^2, E^2_\eps, C_6(\eps))$ is true.
Let $B$ be as in Proposition~\ref{prop:refinement},
and let $E=E^1_\eps\cup E^2_\eps\cup B$.
Then, $(\phi_\mu^2, \Dl, E, C_7(\eps))$ is true.

For each $E$-good cube $R_\nu$  
we pick a ball $B_\nu\subset R_\nu$ 
whose center is $x_\nu$ and whose radius $=\delta/6$.
By Proposition~\ref{prop:geometry-control} 
$$\frac{1}{C_8(\eps)}\leq \frac{\Vol(B_\nu^+)}{\Vol(B_\nu^-)} 
\leq C_8(\eps)\ .$$ 
The crucial point is to estimate the number of
$E$-good cubes. By Lemma~\ref{lem:bad-proportion},
the proportion of $E$-good cubes is
$\geq (1-C_9|E|)$ (which can be negative). 
It only remains to estimate $|E|$:
$|E^1_\eps| \leq C_{10}\eps\mu\delta$, $|E^2_\eps|\leq C_{11}\eps\mu\delta$
and 
$$|B|\leq \delta^n 3^n (\#(E^1_\eps)\mbox{-bad cubes}
+ \#(E^2_\eps)\mbox{-bad cubes}) \leq 
C_{12}(|E^1_\eps|+|E^2_\eps|)\leq C_{13}\eps\mu\delta\ .$$
So $|E|\leq C_{14}\eps\mu\delta$.
To conclude, we take $\eps$ small enough in order that the 
proportion of good cubes is $\geq 70\%$.   
\end{proof}
\begin{proof}[Proof of Theorem~\ref{thm:nodal-tube} - Lower Bound]
The next proposition gives a lower bound in a good ball.
\begin{proposition}
\label{prop:volume-in-good}
  Let $B(x, r)$ be one of the balls described above.
  Then we have $\Vol(T_{\mu, \delta} \cap 2B) \geq C_{15} r^{n-1}\delta$,
  whenever $\mu\delta<C_{16}$.
\end{proposition}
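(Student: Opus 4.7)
The plan is to apply the Brunn--Minkowski inequality to the ``safe'' positive and negative regions of $\phi_\mu$ inside $B$, taking advantage of the fact that these regions have volumes both comparable to $r^n$ (part~(iii) of Proposition~\ref{prop:many-good-balls}) and are forced to lie at mutual distance at least $2\delta$. I work in normal coordinates at $x_i$, in which $B$ is identified with a Euclidean ball of radius $r$, all distances and volumes agreeing with the Euclidean ones up to $O(r^2)=O(\mu^{-2})$ corrections that can be absorbed into the constants.

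Set $V^+:=B^+\setminus T_{\mu,\delta}$ and $V^-:=B^-\setminus T_{\mu,\delta}$. The first step is to verify $\dist(V^+,V^-)\geq 2\delta$: given $p\in V^+$ and $q\in V^-$, the (unique, for $\mu$ large) minimizing geodesic from $p$ to $q$ crosses the nodal set at some point $z$ by the intermediate value theorem, so $d(p,q)=d(p,z)+d(z,q)\geq 2\delta$ since $p,q\notin T_{\mu,\delta}$. Consequently the Minkowski thickenings $V^\pm_\delta:=\{x:d(x,V^\pm)<\delta\}$ have measure-zero intersection, and both lie in $B(x_i,r+\delta)\subset 2B$ (choose $C_{16}<C_1$).

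Brunn--Minkowski in the chart yields, for each sign,
$$\Vol(V^\pm_\delta)\geq\bigl(\Vol(V^\pm)^{1/n}+\omega_n^{1/n}\delta\bigr)^n \geq \Vol(V^\pm)+n\omega_n^{1/n}\Vol(V^\pm)^{(n-1)/n}\delta.$$
Summing over $\pm$, using the containment in $B(x_i,r+\delta)$ and the near-disjointness, and substituting $\Vol(V^+)+\Vol(V^-)=\omega_n r^n-\Vol(T_{\mu,\delta}\cap B)$, gives
$$n\omega_n^{1/n}\delta\bigl(\Vol(V^+)^{(n-1)/n}+\Vol(V^-)^{(n-1)/n}\bigr)\leq \omega_n\bigl((r+\delta)^n-r^n\bigr)+\Vol(T_{\mu,\delta}\cap B).$$
If $\Vol(T_{\mu,\delta}\cap B)\geq\omega_n r^n/2$ the desired bound is already trivial; otherwise set $A:=\Vol(V^+)+\Vol(V^-)\geq\omega_n r^n/2$ and $\alpha_\pm:=\Vol(V^\pm)/A$, so part~(iii) of Proposition~\ref{prop:many-good-balls} forces $\alpha_\pm\in[\alpha_0,1-\alpha_0]$ for some fixed $\alpha_0>0$. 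Strict concavity of $t\mapsto t^{(n-1)/n}$ on $[0,1]$, with boundary values $0$ and $1$, yields $\alpha_+^{(n-1)/n}+\alpha_-^{(n-1)/n}\geq 1+\eta_0$ for some $\eta_0=\eta_0(\alpha_0)>0$. Combined with $(r+\delta)^n-r^n=nr^{n-1}\delta\bigl(1+O(\delta/r)\bigr)$, rearrangement gives
$$\Vol(T_{\mu,\delta}\cap B)\geq n\omega_n r^{n-1}\delta\bigl(\eta_0-O(\delta/r)\bigr).$$
Choosing $C_{16}$ so small that $O(\delta/r)<\eta_0/2$ establishes the proposition with $C_{15}=n\omega_n\eta_0/2$, since $T_{\mu,\delta}\cap B\subseteq T_{\mu,\delta}\cap 2B$.

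The key difficulty is that the Brunn--Minkowski ``gain'' $n\omega_n^{1/n}\Vol(V^\pm)^{(n-1)/n}\delta$ and the geometric ``loss'' $\omega_n((r+\delta)^n-r^n)\approx n\omega_n r^{n-1}\delta$ would cancel exactly if $V^+$ or $V^-$ filled all of $B$. The surplus that beats the loss comes solely from the strict concavity of $t\mapsto t^{(n-1)/n}$ applied to $\alpha_\pm$ bounded away from $0$ and $1$, which is precisely the content of Proposition~\ref{prop:many-good-balls}(iii). This is the reason a one-sided argument using only $V^+$ cannot work.
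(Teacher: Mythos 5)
Your approach is closely parallel to the paper's: both hinge on the Brunn--Minkowski inequality applied to the positive/negative regions inside $B$, and both exploit Proposition~\ref{prop:many-good-balls}(iii) via the strict concavity of $t\mapsto t^{(n-1)/n}$. The difference is in the decomposition. The paper thickens $B^\pm$ directly and uses the \emph{overlap} inclusion $(B^+)_\delta\cap(B^-)_\delta\subseteq T_{\mu,\delta}\cap 2B$, which by inclusion--exclusion gives $\Vol(T_{\mu,\delta}\cap 2B)\geq\Vol(B^+)_\delta+\Vol(B^-)_\delta-\Vol(B(x,r+\delta))$. You instead thicken the ``safe'' sets $V^\pm=B^\pm\setminus T_{\mu,\delta}$ and use the \emph{disjointness} of $V^+_\delta$ and $V^-_\delta$ inside $B(x,r+\delta)$. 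These are essentially complementary versions of the same inequality, so the strategies agree morally; but the paper's version is cleaner because part~(iii) controls $\Vol(B^\pm)$ directly, whereas yours requires first showing $\Vol(V^\pm)\asymp r^n$.

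This is where a genuine gap appears. You dispatch the case $\Vol(T_{\mu,\delta}\cap B)\geq\omega_n r^n/2$ as trivial, and in the complementary case assert that part~(iii) forces $\alpha_\pm=\Vol(V^\pm)/A$ into $[\alpha_0,1-\alpha_0]$. That does \emph{not} follow: part~(iii) gives $\Vol(B^\pm)\geq c_0\Vol(B)$ with $c_0=C_2/(1+C_2)$, and nothing in Proposition~\ref{prop:many-good-balls} forces $C_2\geq 1$, so $c_0$ may well be below $1/2$. If $c_0<1/2$, then even with $\Vol(T_{\mu,\delta}\cap B)<\omega_n r^n/2$ one could have all of $B^+$ swallowed by the tube, making $\Vol(V^+)=0$. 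The fix is to use a smaller threshold, e.g.\ dispatch as trivial the case $\Vol(T_{\mu,\delta}\cap B)\geq c_0\omega_n r^n/2$, which in the remaining case gives $\Vol(V^\pm)\geq\Vol(B^\pm)-\Vol(T_{\mu,\delta}\cap B)\geq c_0\omega_n r^n/2$ and hence $\alpha_\pm\geq c_0/2$. With that repair the rest of your computation goes through; but as written the key inequality $\alpha_+^{(n-1)/n}+\alpha_-^{(n-1)/n}\geq 1+\eta_0$ is unjustified. The paper's choice of thickening $B^\pm$ rather than $V^\pm$ sidesteps this issue entirely.
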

\begin{proof}
Let $(B^+)_\delta$ be a $\delta$-neighborhood of $B^+$, and
similarly for $(B^-)_\delta$. Since \mbox{$T_{\mu, \delta}\cap 2B$}
$\supseteq$  \mbox{$(B^+)_\delta\cap (B^-)_\delta$}, it is clear
that
$$
\Vol(T_{\mu, \delta}\cap 2B) \geq \Vol(B^+)_\delta +
\Vol(B^-)_\delta - \Vol (B(x, r+\delta))\ .
$$
Assume first that the metric $g$ is flat on $2B$.
By the Brunn-Minkowski Inequality~\cite[\S3.2.41]{federer} we know
$$\Vol(B^+)_\delta \geq \Vol(B^+) +
n\omega_n^{1/n}\delta\Vol(B^+)^{1-1/n}\ ,$$
where $\omega_n$ is the volume of the $n$-dimensional unit ball. We have the same
inequality for $(B^-)_\delta$. Set $\Vol(B^+) = \alpha\Vol(B)$, and
$\Vol(B^-) = (1-\alpha)\Vol(B)$. We have
\begin{multline}
\label{tube-in-2b}
\Vol(T_{\mu, \delta}\cap 2B) \geq \Vol(B) - \Vol(B(x, r+\delta)) + \\
n\omega_n^{1/n}\delta \Vol(B)^{1-1/n}
\left(\alpha^{1-1/n} + (1-\alpha)^{1-1/n}\right)\geq \\
\omega_n (r^n - (r+\delta)^n) + n\omega_n r^{n-1}\delta
\left(\alpha^{1-1/n}+(1-\alpha)^{1-1/n}\right)\ .
\end{multline}
At this point one observes that when $\alpha$ is bounded away from
$0$ and $1$ we have $\alpha^{1-1/n}+(1-\alpha)^{1-1/n} > 1 + C_{17}$.
So, if we take $\delta/r=C_{18}\mu\delta$ small enough then the last
expression in~(\ref{tube-in-2b}) is positive and we obtain
$$\Vol(T_{\mu, \delta}\cap 2B) \geq C_{19} n\omega_n r^{n-1}\delta\ .$$

Finally, since the metric $g$ is comparable to a flat metric on a small ball,
we have a similar inequality also for $g$.
\end{proof}

To conclude the proof of the lower bound in
Theorem~\ref{thm:nodal-tube} we observe that due to
Proposition~\ref{prop:many-good-balls}~(iv) the number of balls in
Proposition~\ref{prop:many-good-balls} is $>C_{20}\mu^n$. So,
by Proposition~\ref{prop:volume-in-good}
 $\Vol(T_{\mu, \delta}) > C_{21}\delta/\mu^{n-1}\cdot\mu^n 
= C_{22}\mu\delta\ .$
\end{proof}
%\begin{multline*}
%\Vol(T_{\mu, \delta}\cap 2B) \geq \Vol(B) - \Vol(B(x, r+\delta)) + \\
%n\omega_n^{1/n}\delta \left(\Vol(B^+)^{1-1/n} + \Vol(B^-)^{1-1/n}\right)\geq \\
%\omega_n (r^n - (r+\delta)^n) + 2n\omega_n\delta (A+1)^{-1+1/n}
%r^{n-1} \geq c_5n\omega_n r^{n-1}\delta\ .
%\end{multline*}
%=====================================
\section{Proof of the Upper Bound in Theorem~\ref{thm:nodal-tube}}
\label{sec:upper}
%=====================================
In this section we estimate from above the volume of a tubular neighborhood
of the nodal set. The proof is based on the study in Section~\ref{sec:eigen-small-scale} of eigenfunctions in small scales.

Let $\cV=\{V_k\}$ be a covering of $M$ by small open sets.
Let $R_k\subseteq V_k$ be a cube preferred by
Proposition~\ref{prop:eigen-small-scale-v2}. The next lemma shows
that it is enough to estimate the volume of $T_{\mu, \delta}$ in
preferred cubes.
\begin{lemma}
\label{lem:covering}
There exists a covering $\cV=\{V_k\}$ on $M$ with the following properties
\begin{itemize}
  \item[(a)] $\cV$ is a finite covering.
  \item[(b)] the metric $g$ can be developed in power series in each chart $V_k$.
  \item[(c)] $M=\cup_k R_k$ for some choice of cubes $R_k\subseteq V_k$ preferred by
   Proposition~\ref{prop:eigen-small-scale-v2}.
\end{itemize}
\end{lemma}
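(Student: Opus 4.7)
The plan is a routine compactness argument once we verify that every $p\in M$ lies in the interior of some preferred cube. So I would proceed in two steps: first, for each $p\in M$, produce an analytic chart $V_p$ around $p$ together with a preferred cube $R_p\subseteq V_p$ (from Proposition~\ref{prop:eigen-small-scale-v2}) with $p$ in the interior of $R_p$; second, extract a finite subcover by compactness of $M$.

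For the first step, start with any coordinate chart around $p$ in which $g$ admits an analytic power-series expansion; such a chart exists since $(M,g)$ is real analytic. Identifying this chart with a ball $B(0,\rho_0)\subseteq\RR^n$, Proposition~\ref{prop:eigen-small-scale-v2} supplies \emph{some} preferred cube inside, but the construction (inherited from Proposition~\ref{prop:control-quotient} via the polynomial map $W$ of \cite[Lemma 5.10]{DF}, which collapses the coordinate hyperplanes to $0$) forces $R$ to lie in $W(U)$ for an open set $U$ at positive distance from every hyperplane $x_i=0$. In particular, $R$ need not contain $p$. To arrange that it does, I would shift the chart so that $p$ corresponds to some point $y_0\in W(U)$ with all nonzero coordinates. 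Since Proposition~\ref{prop:control-quotient} permits $R$ to be \emph{any} cube with sides parallel to those of $Q$ lying in $W(U)$, I can choose $R_p$ to be a cube containing an open neighborhood of $y_0$, hence of $p$. This yields the desired chart $V_p$ and preferred cube $R_p$ with $p\in\Int(R_p)$.

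For the second step, the family $\{\Int(R_p)\}_{p\in M}$ is an open cover of $M$, so by compactness there is a finite subcover $\{\Int(R_{p_1}),\ldots,\Int(R_{p_N})\}$. Setting $V_k:=V_{p_k}$ and $R_k:=R_{p_k}$, property (a) is immediate from finiteness, (b) holds by our initial choice of analytic charts, and (c) follows because the $R_k$ already cover $M$.

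The only non-bookkeeping point is verifying the flexibility in the first step, i.e.\ that one can translate the identification of the chart with $B(0,\rho_0)$ so that $p$ is represented by an interior point of $W(U)$. I expect this to be the main (minor) obstacle, since it requires tracing through the construction of $W$ and $U$; but since the construction depends only on the coordinate geometry of $Q$ and not on the center of the chart, it is translation-invariant enough to accommodate the shift. Once the shift is in place, the preferred cube can be chosen around the shifted image of $p$, and the compactness argument finishes the proof.
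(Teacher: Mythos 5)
Your plan matches the paper's: first show every $p\in M$ lies in the interior of some preferred cube, then finish by compactness. The key step is also the one you flag yourself, and it deserves a slightly more careful formulation than ``translation-invariant.'' On a Riemannian manifold there is no translation of a geodesic chart; what the paper actually does (Lemma~\ref{lem:preferred-in-center}) is \emph{re-center} the chart at a new point $q$ with $\dist(p,q)=r$, where $r$ is the distance of the preferred cube from the origin in the standard chart, and then invoke the \emph{rotational} invariance of the construction: any orthogonal transformation of $B(0,\rho_0)$ carries a preferred cube to another preferred cube, because the map $W$ of \cite[Lemma~5.10]{DF} is built purely from the coordinate hyperplanes. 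This lets one rotate the preferred cube to contain the image of $p$ on the sphere of radius $r$. Your observation that ``the construction depends only on the coordinate geometry of $Q$ and not on the center of the chart'' is the right intuition for why re-centering is harmless; combined with the rotational freedom (or, equivalently, freedom in choosing the orthonormal frame at $q$), it gives precisely what you want. A uniform $\rho_0$ for the analytic development of $g$ is supplied by a separate compactness lemma, ensuring the re-centered chart is still admissible. With that substitution for the word ``translation,'' your argument is the paper's argument.
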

\noindent We defer the proof of this Lemma to
Section~\ref{subsec:covering}.

Now, let $R\subseteq V$ be a preferred cube. We decompose it into
boxes $R_\nu$, where the sides of $R_\nu$ are 
of sizes $\delta<l^{(k)}_\nu <2\delta$.
We will denote this subdivision by $\Dl$.
\begin{definition}
We call $R_\nu$ a \emph{nodal} box if $\cN_\mu\cap R_\nu\neq\emptyset$.
\end{definition}
Let us denote the set of nodal boxes $R_\nu$ by
$\mathsf{Nod}$. Recall that $R_\nu^*$ denotes the union of $R_\nu$ with
its $3^n-1$ neighbors.
\begin{lemma}
\label{lem:tube-in-cubes}
  $T_{\mu, \delta}\subseteq\cup_{R_\nu\in\mathsf{Nod}} R_\nu^*$.
\end{lemma}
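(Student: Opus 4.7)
The plan is to show, for every $x \in T_{\mu,\delta}$, that $x$ lies in $R_\nu^*$ for some nodal box $R_\nu$, by locating $x$ together with a nearby nodal point inside a common preferred cube and then comparing their subdivision multi-indices.

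First I would apply Lemma~\ref{lem:covering}(c): the finite family of preferred cubes $\{R_k\}$ covers $M$, so a Lebesgue-number argument (after harmlessly enlarging the $R_k$ inside their charts if necessary to obtain an open cover by their interiors) yields $\lambda>0$ such that every subset of $M$ of diameter less than $\lambda$ lies in some $R_k$. Since the constant $C_3$ in the hypothesis $\mu\delta\leq C_3$ is at our disposal, I may assume $2\delta<\lambda$. Given $x\in T_{\mu,\delta}$, I pick $y\in\cN_\mu$ with $\dist(x,y)<\delta$; then $\{x,y\}$ has diameter less than $\lambda$, so it lies in some $R_k$.

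Next I would carry out the coordinate argument inside this $R_k$. Subdivision boxes of $R_k$ have the form $R_\beta=I^{(1)}_{\beta_1}\times\cdots\times I^{(n)}_{\beta_n}$ with $\delta<|I^{(j)}_l|<2\delta$. Let $R_\nu$ be the box containing $y$ and $R_\alpha$ the box containing $x$. Since $y\in\cN_\mu\cap R_\nu$ we have $R_\nu\in\Nod$, so it suffices to prove $R_\alpha\subseteq R_\nu^*$, equivalently $|\alpha_j-\nu_j|\leq 1$ for every $j$. Suppose instead that $|\alpha_j-\nu_j|\geq 2$ for some coordinate $j$. Then between the intervals $I^{(j)}_{\alpha_j}$ and $I^{(j)}_{\nu_j}$ on the $j$-th axis lies at least one further complete subdivision interval, whose length strictly exceeds $\delta$. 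Hence the $j$-th coordinates of $x$ and $y$ differ by more than $\delta$, so their Euclidean distance exceeds $\delta$; up to the bi-Lipschitz equivalence of Riemannian and Euclidean distance inside a fixed chart (the Lipschitz constant being absorbable into $C_3$), this contradicts $\dist(x,y)<\delta$.

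The only non-routine step is making sure that $x$ and a close-enough nodal witness $y$ can be placed in a common preferred chart, and this is settled by the Lebesgue-number argument above once $\delta$ is sufficiently small. Everything else amounts to elementary multi-index bookkeeping inside the subdivision.
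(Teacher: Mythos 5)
The paper does not actually give a proof of this lemma; it is stated and then the argument proceeds directly to counting nodal boxes, so what you have written is a filled-in version of something the authors treat as immediate. Your elaboration captures the right two ideas: (1) put $x$ and a nearby nodal witness $y$ into a common preferred cube via a Lebesgue-number argument for the finite cover $\{R_k\}$, and (2) once they are in a common subdivision, $\dist(x,y)<\delta$ together with box sides $>\delta$ forces $|\alpha_j-\nu_j|\le1$ in every coordinate, so $x\in R_\nu^*$. Step (1) in particular is a genuine point that the paper leaves implicit, since $R_\nu^*$ only makes sense within a single subdivision and a priori $x$ and its nearest nodal point could straddle a cube boundary.

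Two small imprecisions are worth flagging. First, "enlarging the $R_k$" is not quite the right move, because the enlargements need no longer be preferred cubes; the cleaner route is to note that Lemma~\ref{lem:preferred-in-center} lets one choose each preferred cube so that the distinguished point is interior, so the interiors of the chosen $R_k$ already form an open cover and the Lebesgue-number lemma applies directly. Second, the phrase "the Lipschitz constant being absorbable into $C_3$" does not quite fix the metric-comparison issue: shrinking $C_3$ shrinks $\delta$ but does not change the bi-Lipschitz constant $L$ between Riemannian and chart coordinates, and $\dist_g(x,y)<\delta$ only gives Euclidean distance $<L\delta$ while the boxes have Euclidean side $>\delta$, so for $L>1$ the multi-index step as written fails. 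The correct fix is to take the subdivision boxes with sides in $(L\delta,2L\delta)$ (or, equivalently, to prove the inclusion for $T_{\mu,\delta/L}$); this is a constant-rescaling that propagates harmlessly through Lemma~\ref{lem:num-good-nodal} and the final volume count, and the paper itself elides it. With these cosmetic adjustments, your proof is sound.
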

It remains to estimate the number of nodal boxes.
Fix $\eps=1$. Proposition~\ref{prop:eigen-small-scale-v2}
tells us that $(\phi_\mu^2, \Dl, E, C_1)$ is true 
(cf. Def.~\ref{def:approximate-average} \&~\ref{def:goodness}).
\begin{lemma}
\label{lem:num-good-nodal}
 The number of $E$-good nodal cubes is 
$\leq C_2\Vol_{n-1}(\cN_\mu)/\delta^{n-1}$.
\end{lemma}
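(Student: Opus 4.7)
The plan is to assign to each $E$-good nodal box $R_\nu$ a ball $B_\nu$ of radius comparable to $\delta$ in which the nodal set carries $(n-1)$-Hausdorff measure at least $c\,\delta^{n-1}$, and then to sum over such balls, using that their overlap multiplicity as a cover of $\cN_\mu$ is $O(1)$. Concretely, for each $E$-good nodal box $R_\nu$ I will pick $x_\nu \in \cN_\mu \cap R_\nu$ and set $B_\nu := B(x_\nu, c_0\delta)$ with $c_0>0$ chosen small enough that $2B_\nu \subseteq R_\nu^*$; this is possible because every point of $R_\nu$ sits at distance at least $\delta$ from $\bdry R_\nu^*$.

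Next, since $\eps = 1$ is fixed throughout the argument (so that the constant $A = C_1$ from Proposition~\ref{prop:eigen-small-scale-v2} is absolute), applying Proposition~\ref{prop:geometry-control} to $B_\nu$ yields that $B_\nu^+ := B_\nu \cap \{\phi_\mu > 0\}$ and $B_\nu^- := B_\nu \cap \{\phi_\mu < 0\}$ have comparable $n$-volumes; in particular each is at least a fixed fraction of $\Vol(B_\nu) \asymp \delta^n$. The relative isoperimetric inequality, valid on $B_\nu$ because the metric is uniformly bi-Lipschitz to the Euclidean one on such a small ball, then gives
$$
  \Vol_{n-1}(\cN_\mu \cap B_\nu) \;\geq\; \Vol_{n-1}(\bdry B_\nu^+ \cap \Int B_\nu) \;\geq\; c\,\delta^{n-1}.
$$

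For the summation, I observe that a point $p \in \cN_\mu$ can lie in $B_\nu$ only if $p \in R_\nu^*$, and at most $3^n$ indices $\nu$ satisfy $p \in R_\nu^*$. Hence
$$
  \#\{E\text{-good nodal }R_\nu\}\cdot c\,\delta^{n-1}
  \;\leq\; \sum_\nu \Vol_{n-1}(\cN_\mu \cap B_\nu)
  \;\leq\; 3^n\,\Vol_{n-1}(\cN_\mu),
$$
which rearranges into the asserted bound.

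The main obstacle I anticipate is the invocation of Proposition~\ref{prop:geometry-control}: as stated it requires $B_\nu \subseteq R_\nu$, whereas in our setting $B_\nu$ may only satisfy $B_\nu \subseteq R_\nu^*$ when $x_\nu$ happens to lie close to $\bdry R_\nu$. I plan to handle this either (a) by extending the proof of that proposition to accept a ball centered at a zero with $2B_\nu \subseteq R_\nu^*$ -- the only use of $B \subseteq R_\nu$ there is inside the growth estimate via Lemma~\ref{lem:growth-on-good-balls}, and this estimate can be re-derived over $R_\nu^*$ by chaining the $E$-good property of $R_\nu$ with the fact that $B_\nu$ meets only $O(1)$ boxes of $\Dl$ -- or (b) by replacing $\Dl$ in the vicinity of $R_\nu$ with a suitably shifted subdivision in which the analogous box has $x_\nu$ in its middle third; finitely many shifts suffice for all nodal boxes and only inflate the constants.
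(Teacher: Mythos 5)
Your proposal is correct and follows the same overall scheme as the paper --- produce a local lower bound $\Vol_{n-1}(\cN_\mu\cap R_\nu^*)\gtrsim\delta^{n-1}$ for each $E$-good nodal box via Proposition~\ref{prop:geometry-control}, then sum using the $3^n$ overlap bound for the sets $R_\nu^*$. The difference is in how you convert the volume-ratio control of Proposition~\ref{prop:geometry-control} into a lower bound on nodal hypersurface measure. The paper reuses the Brunn--Minkowski computation of Proposition~\ref{prop:volume-in-good} to get $\liminf_{t\to0}\Vol(T_{\mu,t}\cap 2B)/t\gtrsim\delta^{n-1}$, and then invokes Federer's theorems that a real-analytic set is rectifiable and that the Minkowski content of a rectifiable set equals its Hausdorff measure. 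You instead apply the relative isoperimetric inequality on the ball $B_\nu$ directly: with $|B_\nu^+|$ and $|B_\nu^-|$ both $\asymp\delta^n$, the perimeter of $B_\nu^+$ in $\Int B_\nu$ is $\gtrsim\delta^{n-1}$, and the reduced boundary lies inside $\cN_\mu$. This is a cleaner route; it avoids the limit $t\to 0$ and the Minkowski-content machinery, which is worth noting since the paper's Claim~\ref{clm:area-near-good} is quite terse about exactly which set the $\liminf$ is taken over and where the equality of limit and Hausdorff measure is applied.

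You are right to flag the hypothesis $B\subseteq R_\nu$ in Proposition~\ref{prop:geometry-control}: a zero of $\phi_\mu$ in an $E$-good nodal box may sit next to $\bdry R_\nu$, so one cannot choose $B(o,2r)\subseteq R_\nu$ with $r>\side(R_\nu)/20$ in general. The paper's Claim~\ref{clm:area-near-good} glosses over this. Your fix~(a) does work, and the easiest way to make it precise is the following observation: if $B=B(o,r)$ with $o\in R_\nu$ and $r\asymp\delta$, then $|B\cap R_\nu|\geq 2^{-n}|B|$, so the proof of Lemma~\ref{lem:growth-on-good-balls} still runs after replacing $\int_{B\setminus E}$ by $\int_{(B\cap R_\nu)\setminus E}$, with constants depending only on $n$; the goodness of $R_\nu$ alone suffices, neighboring boxes need not be good. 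Fix~(b) is shakier as stated, since the box of the shifted subdivision containing $x_\nu$ in its middle third need not be good with respect to the shifted subdivision's exceptional set; one would have to take the union of the exceptional sets over all shifts from the outset. Fix~(a) is the one to use.
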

\begin{proof}
We begin by
\begin{claim}
\label{clm:area-near-good}
  Let $R_\nu$ be an $E$-good nodal cube. Then
  \begin{equation}
   \label{area-near-good}
    \Vol_{n-1}(\cN_\mu\cap R_\nu^*) \geq C_3\delta^{n-1}\ .
  \end{equation}
\end{claim}
\begin{proof}[Proof of Claim]
First we see from the Brunn-Minkowski Inequality
as in Proposition~\ref{prop:volume-in-good}
that 
\begin{equation}
\label{minkowski}
  \liminf_{t\to0} \frac{\Vol(T_{\mu, t})}{t} \geq C_3\delta^{n-1}.
\end{equation}
Since $\cN_\mu$ is an analytic set, it is rectifiable
(\cite[Theorem 3.4.8 (13)]{federer}) and thus (\cite[Theorem 3.2.39]{federer}),
the limit in~(\ref{minkowski}) exists and equals 
$\Vol_{n-1}(\cN_\mu\cap R_\nu^*)$. 
\end{proof}

  Summing up~(\ref{area-near-good}) over all good nodal cubes we arrive at
  \begin{multline}
   3^n\Vol_{n-1}(\cN_\mu)\geq
  \sum_\nu \Vol_{n-1}(\cN_\mu\cap R_\nu^*) \geq \\
  \sum_{\mbox{good nodal }R_\nu\mbox{'s}} \Vol_{n-1}(\cN_\mu\cap R_\nu^*) \geq
  C_4\#(\mbox{good nodal }R_\nu\mbox{'s})\delta^{n-1}\ .
   \end{multline}
\end{proof}

\begin{proof}[Proof of Theorem~\ref{thm:nodal-tube}]
By Lemma~\ref{lem:bad-proportion} we know that 
the number of $E$-bad nodal cubes is $\leq
C_5\mu/\delta^{n-1}$. By Lemma~\ref{lem:num-good-nodal} and
Theorem~\ref{thm:nodal-volume} the number of $E$-good nodal cubes is $\leq
C_6\mu/\delta^{n-1}$. Together, we get that the number of nodal
cubes is $\leq C_7\mu/\delta^{n-1}$. By
Lemma~\ref{lem:tube-in-cubes}
$\Vol(T_{\mu, \delta}) \leq C_8\#(\Nod) \delta^n \leq C_{9}\mu\delta\ .$
\end{proof}

\subsection{Proof of Lemma~\ref{lem:covering}}
\label{subsec:covering}
The following lemma is clear by compactness of $M$.
\begin{lemma}
  There exists $\rho_0>0$ such that for all $p$, the metric $g$ can be
  developed in power series in $B(p, \rho_0)$.
\end{lemma}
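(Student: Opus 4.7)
The plan is a standard compactness argument exploiting the real analyticity of $(M,g)$. Since $M$ is real analytic and closed (hence compact), every point $p \in M$ admits an analytic coordinate chart in a neighborhood of $p$ in which the components $g_{ij}$ are real analytic functions of the coordinates. Thus for each $p$ there exists $\rho(p) > 0$ such that, in some analytic chart centered at $p$, the metric $g$ can be expanded as a convergent power series on $B(p, \rho(p))$.

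First, I would fix such charts and such radii $\rho(p)$ pointwise. The family of open balls $\{B(p, \rho(p)/2) : p \in M\}$ is an open cover of $M$. By compactness, I extract a finite subcover $B(p_1, \rho(p_1)/2), \ldots, B(p_N, \rho(p_N)/2)$, and set
\[
\rho_0 := \tfrac{1}{2}\min_{1 \leq i \leq N} \rho(p_i) > 0.
\]

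Next I would verify that this $\rho_0$ works for every $q \in M$. Given such a $q$, choose $i$ with $q \in B(p_i, \rho(p_i)/2)$. Then $B(q, \rho_0) \subseteq B(p_i, \rho(p_i))$, since $\rho_0 \leq \rho(p_i)/2$ by construction. In the analytic chart centered at $p_i$, the components $g_{jk}$ are given by a convergent power series on $B(p_i, \rho(p_i))$; shifting the origin of this power series from $p_i$ to $q$, I obtain a power series expansion for the $g_{jk}$ which converges on a polydisk containing $B(q, \rho_0)$. Composing with a rigid translation of coordinates (which is itself analytic) gives an analytic chart centered at $q$ in which $g$ is developed as a convergent power series on $B(q, \rho_0)$, as required.

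There is essentially no obstacle here: the content is purely the observation that the pointwise radius of analyticity can be chosen uniformly by compactness, together with the elementary fact that a convergent power series centered at one point can be re-expanded at any point of its disk of convergence with a radius that is controlled by the distance to the boundary. This $\rho_0$ is what is fed into the construction of the covering $\mathcal{V} = \{V_k\}$ in Lemma~\ref{lem:covering}.
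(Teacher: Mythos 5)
Your argument is correct and is exactly the standard compactness argument the paper has in mind; the paper itself offers no proof, stating only that the lemma ``is clear by compactness of $M$.'' Your write-up simply fills in the details the authors chose to omit.
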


Let $\rho_1 = \rho_0/C$ with $C$ large enough.
\begin{lemma}
\label{lem:preferred-in-center}
  Every ball $B(p, \rho_1)$ contains a preferred cube $R$ which contains $p$.
\end{lemma}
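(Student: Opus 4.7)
The plan is to exploit the flexibility in the construction of a preferred cube in Proposition~\ref{prop:control-quotient}: the cube $R$ there may be chosen as \emph{any} cube contained in $W(U)$, for any open set $U$ with positive distance from the coordinate hyperplanes, and the coordinate chart itself may be centered anywhere on $M$. To place the given $p$ inside such a cube, I first fix a small reference cube once and for all, and then shift the chart so that $p$ lands inside.

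Concretely, fix once and for all (independently of $p$) a reference cube $R^\circ \subseteq W(U^\circ)$ with an interior point $c^\circ \in R^\circ$, where $U^\circ$ is some fixed open set with positive distance from the coordinate hyperplanes. Since we may shrink $R^\circ$ arbitrarily inside the open set $W(U^\circ)$, and the Euclidean and Riemannian metrics on a chart of radius $\rho_0$ are comparable with a uniform constant $C_0>0$ (which exists by compactness of $M$), I arrange that both $|c^\circ| + \operatorname{diam}(R^\circ) < \rho_0/C_0$ and $\operatorname{diam}(R^\circ) < \rho_1/C_0$; the latter forces taking $\rho_1 = \rho_0/C$ with $C$ sufficiently large, which is exactly the choice made above the lemma.

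Given $p \in M$, let $q = \exp_p(-v) \in M$, where $v \in T_p M$ is the vector with coordinates $c^\circ$ in some orthonormal frame at $p$; in the exponential chart at $q$ (with frame obtained by parallel transport along the geodesic from $p$ to $q$), the point $p$ has coordinates exactly $c^\circ$. Since $|c^\circ| + \operatorname{diam}(R^\circ) < \rho_0/C_0$, the cube $R^\circ$ sits entirely inside the coordinate image of $B(q, \rho_0)$, so Proposition~\ref{prop:control-quotient} applies in the chart at $q$: the preimage $R$ of $R^\circ$ under the chart is a preferred cube on $M$. Clearly $p \in R$, and since $R^\circ$ has Euclidean diameter $<\rho_1/C_0$, we get $R \subseteq B(p, \rho_1)$. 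The only nontrivial point is that $R^\circ$ can be chosen of arbitrarily small diameter inside the open set $W(U^\circ)$, which is immediate from openness; all other steps are direct consequences of Proposition~\ref{prop:control-quotient} and the uniform comparability of Euclidean and Riemannian metrics on charts of radius $\rho_0$.
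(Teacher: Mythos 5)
Your proof is correct and takes essentially the same approach as the paper's: in both, one re-centers the geodesic chart at an auxiliary point $q$ at distance $|c^\circ|$ from $p$ so that $p$ falls inside the fixed preferred cube $R^\circ \subseteq W(U^\circ)$ of Proposition~\ref{prop:control-quotient}. Where the paper realizes the needed coordinate adjustment by an orthogonal rotation of the chart (explicitly noting that orthogonal transformations preserve the class of preferred cubes) and then shrinks the resulting cube to fit inside $B(p,\rho_1)$, you realize the same thing by a parallel-transported frame at $q$ and shrink $R^\circ$ at the outset — a cosmetic rather than substantive difference.
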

\begin{proof}
We identify $B(p, \rho)$ with the Euclidean ball $B(0, \rho)$ by
working in geodesic coordinates. Suppose that the point $x_0\in
R\subseteq B(0, \rho_0)$. Let $x_1\in B(0, \rho_0)$ with
$|x_1|=|x_0| =:r$. From proposition~\ref{prop:eigen-small-scale-v2} 
we know that $R$ is independent of $\mu$. 
By symmetry considerations, or just by
examining the proof of Proposition~\ref{prop:control-quotient} 
we see that any orthogonal transformation in $B(0, \rho_0)$ takes
$R$ to another preferred cube.

Now, given $p$, let $q$ be any point on $M$ such that $\dist(p, q)
= r$. The geodesic ball $B(q, \rho_0)$ contains a preferred cube
$R_1$ which contains $p$. Take a cube $R$ in
$R_1\cap B(p, \rho_1)$ which contains $p$.
\end{proof}

\begin{proof}[Proof of Lemma~\ref{lem:covering}]
By lemma~\ref{lem:preferred-in-center} we can cover $M$ by
preferred cubes. Then by compactness of $M$ we can extract a
finite covering by preferred cubes.
\end{proof}

\section{Approximation by Nodal Sets}
\label{sec:approx}
%==========================================================
\begin{proof}[Proof of Theorem \ref{thm:approx}] The proof
proceeds similarly to the proof of Corollary \ref{ratls}.  Fix
$C,\eps>0$.  Let $T_{k, \delta}$ be the tubular  neighborhood of
$\cN(\phi_k)$ of radius $\delta_k=C/\mu_k^{n+1+\eps}$. By
Theorem~\ref{thm:nodal-tube} $\Vol(T_{k,\delta_k}) \leq
C/\mu_k^{n+\eps}$. We conclude that
\begin{equation}\label{union:ar}
\sum_k \Vol\left(T_{k,\delta_k}\right)\leq C\sum_k\mu_k^{-n-\eps}.
\end{equation}
By Weyl's Law~\cite{weyl,H} we know that \
$$
\mu_k\asymp  Ck^{1/n}.
$$
Hence
$$
\sum_k{\Vol}\left(T_{k,\delta_k}\right)\leq C\sum_k k^{-1-\eps/n}
$$
is finite. So, by the Borel-Cantelli Lemma (see e.g.~\cite{feller})
we obtain
$$\Vol(\cap_{j=1}^{\infty}\cup_{k=j}^{\infty} T_{k, \delta_k}) = 0\ .$$
\end{proof}
%%%%%%%%%%%%%%%%%%%%%%%%%%%%%%%%%%%%%%%%%%%%%%%%%%%%%%%%%%%%%%%%%%%%%%

%------------------------------------------------------------
\section{Dimension two}
\label{sec:dim2}
%------------------------------------------------------------
\begin{theorem}
\label{thm:tube-dim2}
  Let $(\Sigma, g)$ be a smooth (i.e.\ $C^\infty$) closed Riemannian surface.
  Then there exist $C_1, C_2>0$ such that
  $$C_1\mu\delta\leq \Vol(T_{\mu, \delta}) \leq C_2\length(\cN_\mu)\delta\ .$$
\end{theorem}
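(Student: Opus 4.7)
The plan is to mimic the two-part strategy used for Theorem~\ref{thm:nodal-tube}, replacing the holomorphic small-scale machinery of Sections~\ref{sec:hol-dim-1}--\ref{sec:eigen-small-scale} (which required real analyticity) with purely two-dimensional ingredients that remain valid for $C^{\infty}$ metrics.

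For the lower bound, I would first select a maximal family of points $x_i\in\cN_\mu$ in $\Sigma$ such that the geodesic balls $2B_i:=B(x_i,2r)$ with $r=c/\mu$ are pairwise disjoint. By maximality together with the $C/\mu$-density of $\cN_\mu$ (Proposition~\ref{prop:dense}), the cardinality of the family is of order at least $\mu^{2}$, and in particular $\sum_i\Vol(B_i)$ captures a definite fraction of $\Vol(\Sigma)$. The role of Proposition~\ref{prop:geometry-control} is now played by the two-sided asymmetry estimate
$$c'\leq\frac{\Vol(B_i^{+})}{\Vol(B_i^{-})}\leq C',$$
which, for any \emph{smooth} Riemannian surface, holds in \emph{every} wavelength ball centered at a nodal point; this is a known consequence of elliptic theory in two dimensions, and in the form needed here it can be read off from the second author's work on local asymmetry of nodal sets in $2$D. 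With these balanced balls in hand, I would apply the Brunn--Minkowski computation of Proposition~\ref{prop:volume-in-good} essentially verbatim to obtain $\Vol(T_{\mu,\delta}\cap 2B_i)\geq C r\delta=C\delta/\mu$ whenever $\mu\delta$ is small enough, and then sum over the $\sim\mu^{2}$ disjoint balls to conclude $\Vol(T_{\mu,\delta})\geq C_1\mu\delta$.

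For the upper bound, the point is that on a smooth surface $\cN_\mu$ is automatically $1$-rectifiable: at any non-critical point of $\phi_\mu$ the nodal set is locally a smooth curve by the implicit function theorem, while by S.-Y.~Cheng's classical theorem the critical points of $\phi_\mu$ lying on $\cN_\mu$ form a discrete set at which $\cN_\mu$ is a finite union of smooth arcs meeting in an equiangular crossing. Consequently, the Minkowski content argument of Claim~\ref{clm:area-near-good} (based on \cite[Theorem~3.2.39]{federer}) applies globally to $\cN_\mu$ and yields
$$\lim_{\delta\to 0^{+}}\frac{\Vol(T_{\mu,\delta})}{\delta}=2\length(\cN_\mu).$$
A routine uniform version of this bound for $\mu\delta$ small but positive---for example, by covering $\cN_\mu$ with at most $C\length(\cN_\mu)/\delta$ geodesic discs of radius $\delta$, each contributing area $\leq C\delta^{2}$ to the tubular neighborhood---then gives $\Vol(T_{\mu,\delta})\leq C_2\length(\cN_\mu)\delta$ whenever $\mu\delta$ is bounded.

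The main obstacle is the asymmetry estimate used for the lower bound. In dimension $\geq 3$ without analyticity, no such two-sided bound on the positivity and negativity volumes in a single wavelength ball centered at a nodal point is currently known, and the present strategy uses $\dim\Sigma=2$ in an essential way. The remaining ingredients---Proposition~\ref{prop:dense}, Brunn--Minkowski, and the rectifiability of $\cN_\mu$---are soft and dimension-insensitive.
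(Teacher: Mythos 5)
Your plan is reasonable in outline, but both halves deviate from what the paper actually does, and the upper bound as written has a real gap.

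\textbf{Lower bound.} You run the asymmetry-plus-Brunn--Minkowski route of Proposition~\ref{prop:volume-in-good}. This requires a two-sided bound on $\Vol(B_i^+)/\Vol(B_i^-)$ on wavelength balls centered at nodal points, and for a merely $C^\infty$ metric that is not free: Mangoubi's asymmetry theorem (quoted in the proof of Proposition~\ref{prop:geometry-control}) takes the doubling-type growth bound as a hypothesis, and a uniform growth bound on \emph{every} wavelength ball is not something one gets for nothing in the smooth category. One can likely rescue your argument by working only with a positive proportion of ``good'' balls, but this needs a separate justification. In any case, the paper avoids all of this with Br\"uning's far more elementary observation, specific to $n=2$: since $\phi_\mu(x_i)=0$ and Faber--Krahn rules out nodal domains of diameter $\ll 1/\mu$, an arc of $\cN_\mu$ must join $x_i$ to $\partial B(x_i,r)$; in local coordinates such an arc has a coordinate projection of length $\gtrsim r$, and hence by Fubini its $\delta$-tube already has area $\gtrsim r\delta$. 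Summing over $\sim\mu^2$ disjoint wavelength balls produced by Proposition~\ref{prop:dense} gives $\Vol(T_{\mu,\delta})\gtrsim\mu\delta$ with no asymmetry estimate and no Brunn--Minkowski.

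\textbf{Upper bound.} The step ``cover $\cN_\mu$ with at most $C\,\length(\cN_\mu)/\delta$ geodesic discs of radius $\delta$'' is the entire content of the estimate and is not ``routine.'' Length alone does not prevent $\cN_\mu$ from looking, at scale $\delta$, like a dust of many short components or many singular crossings, each forcing an extra disc. The Minkowski-content identity you quote is an $\delta\to 0$ limit and gives no uniform control at scale $\delta\sim 1/\mu$. The paper's second proof (Sodin's) closes exactly this gap: for each $\delta$-cube $Q$ meeting $\cN_\mu$, consider the concentric cube $Q'$ of triple side; either $\cN_\mu$ contains an arc crossing from $Q$ to $\partial Q'$ (length $\gtrsim\delta$), or some nodal domain is entirely enclosed in $Q'$, in which case Faber--Krahn plus the isoperimetric inequality gives $\length(\cN_\mu\cap Q')\gtrsim 1/\mu\gtrsim\delta$. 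Either way, each such cube carries $\gtrsim\delta$ of length, so there are $\lesssim\length(\cN_\mu)/\delta$ of them. You need some version of this Faber--Krahn dichotomy to make your covering claim true; as written it is asserted, not proved. (The paper also gives a completely different first proof of the upper bound, via Savo's interior-parallel estimate combined with Euler's formula and Courant's theorem; your proposal does not touch that route.)
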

\noindent In particular, Theorem~\ref{thm:nodal-tube} is true for
surfaces which satisfy Yau's conjecture.

We recall from~\cite{DF2} that for any smooth surface
$\length(\cN_\mu)\leq C_3\mu^{3/2}$. Hence, if we modify the  proof
of Theorem~\ref{thm:approx} according to Theorem~\ref{thm:tube-dim2}
we obtain
\begin{proposition}
\label{smooth2d}
Let $(\Sigma, g)$ be a closed compact surface with a {\em smooth} metric $g$.
Then we have
$\Vol(M(7/2+\eps,C))=0$ for all $C,\eps>0$.
\end{proposition}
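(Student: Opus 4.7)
The plan is to mimic the proof of Theorem~\ref{thm:approx} almost verbatim, but substituting the sharper two-dimensional tube estimate of Theorem~\ref{thm:tube-dim2} in place of the general estimate from Theorem~\ref{thm:nodal-tube}, and using the Donnelly--Fefferman bound $\length(\cN_\mu) \leq C_3 \mu^{3/2}$ recalled from~\cite{DF2}. The goal is to show that for any fixed $C, \eps > 0$, the set $M(7/2+\eps, C)$ has measure zero, by a standard Borel--Cantelli argument applied to the tubular neighborhoods $T_{k,\delta_k}$ with the exponent $\delta_k = C/\mu_k^{7/2+\eps}$ dictated by the definition of $M(b,C)$.

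First I would fix $C, \eps > 0$ and set $\delta_k = C/\mu_k^{7/2+\eps}$. Combining the upper bound from Theorem~\ref{thm:tube-dim2} with the length bound $\length(\cN_{\mu_k}) \leq C_3 \mu_k^{3/2}$, I obtain
\begin{equation*}
\Vol(T_{k,\delta_k}) \leq C_2 \length(\cN_{\mu_k}) \delta_k \leq C' \mu_k^{3/2} \mu_k^{-7/2-\eps} = C' \mu_k^{-2-\eps}.
\end{equation*}
Next I would invoke Weyl's Law in dimension $n=2$, which gives $\mu_k \asymp C k^{1/2}$, hence $\mu_k^{-2-\eps} \asymp k^{-1-\eps/2}$. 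Summing over $k$,
\begin{equation*}
\sum_k \Vol(T_{k,\delta_k}) \leq C'' \sum_k k^{-1-\eps/2} < \infty.
\end{equation*}

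Finally, the Borel--Cantelli Lemma yields $\Vol\bigl(\bigcap_{j=1}^\infty \bigcup_{k=j}^\infty T_{k,\delta_k}\bigr) = 0$. Since any point $x \in M(7/2+\eps, C)$ lies, by definition, in infinitely many of the $T_{k,\delta_k}$ (as $B(x, \delta_k) \cap \cN_{\mu_k} \neq \emptyset$ means $x \in T_{k,\delta_k}$), this $\limsup$-set contains $M(7/2+\eps, C)$, and the conclusion follows.

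No step looks like a real obstacle: this is essentially a bookkeeping exercise that replaces the pair (tube bound, eigenvalue-counting exponent) $(\mu\delta,\ k^{1/n})$ used for $n+1+\eps$ in Theorem~\ref{thm:approx} with the pair $(\mu^{3/2}\delta,\ k^{1/2})$, yielding the threshold $3/2 + 2 = 7/2$ for the exponent. The only care needed is to verify that $\mu_k \delta_k$ stays small so that Theorem~\ref{thm:tube-dim2} applies in the regime $\delta \leq C_3/\mu$; this holds automatically for large $k$ since $\mu_k \delta_k = C \mu_k^{-5/2-\eps} \to 0$, and the finitely many small-$k$ terms do not affect the Borel--Cantelli conclusion.
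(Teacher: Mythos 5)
Your proof is correct and follows exactly the modification the paper intends: substitute the two-dimensional tube bound $\Vol(T_{\mu,\delta})\leq C_2\length(\cN_\mu)\delta$ together with the Donnelly--Fefferman bound $\length(\cN_\mu)\leq C_3\mu^{3/2}$ into the Borel--Cantelli argument of Theorem~\ref{thm:approx}, with $\mu_k\asymp k^{1/2}$ from Weyl's Law in dimension two. Your remark that $\mu_k\delta_k\to 0$ (so that the tube bound applies for all but finitely many $k$) is the right detail to check and is correctly handled.
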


\subsection{Lower Bound in Theorem~\ref{thm:tube-dim2}}
This is basically Br\"uning's argument. We can cover a fixed portion
of $\Sigma$ with pairwise disjoint balls $B_i= B(x_i, r)$ of radius
$r=c/\mu$ and such that $\phi_\mu(x_i)=0$. The set $\cN_\mu\cap
B(x_i, r)$ is of length $\geq r$. Moreover, in local coordinates it
has a projection of length $\geq cr$ on one of the axes. This
implies that $T_{\mu,\delta}\cap B(x_i, r)$ has area $\geq
cr\delta$. Summing up over all the balls $B_i$ we obtain
$$\Vol(T_{\mu, \delta}) \geq c_1\mu^2 \cdot c_2\delta/\mu =
c_3\mu\delta\ .$$

\subsection{Upper Bound in Theorem~\ref{thm:tube-dim2}- First Proof}
Let an eigenfunction $\phi_\mu$ have nodal domains
$\Omega_1,\ldots,\Omega_{N(\mu)}$. Given
$\bd\Omega_j\subset\cN_\mu$, let $L_j(t)$ denote the {\em interior
parallel} of $\bd\Omega_k$ at the distance $t$ inside $\Omega_j$.
It is clear that
\begin{equation}\label{vol:int}
\ar (A_\mu)=\sum_{j=1}^N\int_{t=0}^{\delta}{\rm length}(L_j(t)) \dt.
\end{equation}

The following inequality can be found in~\cite[Proposition A.1.iv]{S}:
\begin{equation}\label{savo}
{\rm length}(L_j(t))\leq {\rm length}(\bd\Omega_j)+
R(\Omega_j)\max\left\{\int_{\Omega_j}K^+ -
2\pi\chi(\Omega_j),0\right\}.
\end{equation}
Here $K^+$ denotes the positive part of the Gauss curvature,
$\chi(\Omega_j)$ is proportional to the number $m_j=m_j(\mu)$ of
connected components of $\bd\Omega_j$, and $R(\Omega_j)$ denotes the
inner radius of $\Omega_j$.  We substitute \eqref{savo} into
\eqref{vol:int} and sum over $1\leq j\leq N$.
By Proposition~\ref{prop:dense}  we know that
$R(\Omega_j)\leq C/\mu.$
We get the estimate
\begin{equation}\label{ar:est1}
\frac{\ar (A_\mu)}{\delta} \leq 2\cdot {\rm
length}(\cN_\mu) + \frac{C\int_M K^+}{\mu} + \frac{4\pi
C}{\mu}\sum_{j=1}^{N(\mu)} m_j(\mu)
\end{equation}
As $\mu_j=\mu\to\infty,$ the second term goes to zero.  It remains to
estimate the third term.  One can construct a connected graph
on $M$ whose edges will include all arcs of $\cN_\mu$, and show
using Euler's formula that
$$
\sum_{j=1}^N m_j\leq 2(N+g-1),
$$
where $g$ denotes the genus of the surface $M$.  Also, by Courant's
nodal domain theorem
$$
N=N(\mu_k)\leq k+1.
$$
We recall that by \cite{weyl,H} in dimension two $\mu_k\asymp
C\sqrt{k}$, hence $N(\mu_k)\leq C\mu_k^2$.  It follows that the
third term in the right-hand side of \eqref{ar:est1} is less than
$C\mu$.  Substituting everything back into \eqref{ar:est1} and recalling that
$\length(\cN_\mu)\geq C\mu$ (see~\cite{Bru}) we get
the desired estimate.
\qed
%\end{proof}

\subsection{Upper Bound in Theorem~\ref{thm:tube-dim2}- Second Proof}
\label{proof2d:2}

This proof was communicated by M.~Sodin.

It suffices to give a proof for the neighborhood of $\cN_\mu$ of size
$\delta/3$. We cover $M$ with cubes of
side $C\delta$ ({\em large cubes}), as well as by cubes of
side $C\delta/3$ ({\em small cubes}). One can easily
arrange that each cube intersects a bounded number of other cubes.
For every small cube, there exists a unique concentric large cube
whose side is three times larger.  To estimate the area of $T_{\mu,\delta}$, it
suffices to estimate the volume of the union $B_j$ of all small
cubes which intersect the nodal set $\cN_\mu$. Indeed, if
$x\in T_{\mu, \delta}$, then $\cN_\mu$ intersects either the small cube containing
$x$, or one of the $8$ neighboring small cubes, so the volume of
$T_{\mu,\delta}$ is at most $9\cdot \vol(B_j)$.

We distinguish several cases
\begin{itemize}
\item[i)] $\cN_\mu$ intersects a small cube $Q$, but any connected
component of $\cN_\mu\cap Q$ doesn't intersect the boundary of the big
concentric cube $Q'$.
\item[ii)] $\cN_\mu$ intersects a small cube $Q$, and there exists a connected
component of $\cN_\mu\cap Q$ that intersects the boundary of the big
concentric cube $Q'$.
\end{itemize}

In case (i)  there is at least one nodal domain contained in $Q'$,
so by the Faber-Krahn Inequality (see~\cite[Ch.~7, Th.~1]{egokon}) we get that
the area of this nodal domain is $> C/\mu^2$. By the Isoperimetric
Inequality, the length of $\cN_\mu\cap Q'$ is at least $C/\mu\geq C\delta$.
%so by Courant's nodal domain theorem the number of such large cubes
%is $\ll\mu_j^2$, hence the sum of the areas is
%\begin{equation}\label{2d:type1}
%\ll\mu^2\cdot\delta^2=\mu_j^{-4-\delta}.
%\end{equation}

In case (ii), the length of $\cN_j\cap Q'$ is at least
$\delta/3$.

Hence, we conclude that the number
of $Q'$ for which $Q$ satisfies case~(i) or case~(ii) is $\ll\length(\cN_\mu)/\delta$.
Accordingly, the sum of the areas of those cubes is
\begin{equation}\label{2d:type2}
\ll\length(\cN_\mu)/\delta\cdot\delta^2 \leq C\length(\cN_\mu) \delta.
\end{equation}

\qed

%%%%%%%%%%%%%%%%%%%%%%%%%%%%%%%%%%%%%%%%%%%%%%%%%%%%%%%%%%%%%%%%%%%%%%%%%

\section{Discussion}
\label{sec:discussion}

For a given $M$ it seems interesting to find
$$
E(M):=\sup\{b:\vol(M(b,C))>0\ for\ some\ C>0\}.
$$
Theorem \ref{thm:approx} implies that on real-analytic
$n$-dimensional manifolds, $E(M)\leq n+1$. In dimension one, it
follows from the theory of continued fractions that $E(M)=2$ for
$M=[0,\pi]$.  In fact, $M(2,\pi)=M$ while
$\Vol(M(2+\eps,C))=0\ \forall \eps>0$.

The same result likely holds for {\em separable
systems} (Examples include surfaces of revolution,
Liouville tori and {\em quantum completely integrable} systems
\cite{TZ}). In such systems one can separate variables and choose a
basis of eigenfunctions that (in appropriate coordinates) have the
form $\phi(x_1,\ldots,x_n)=\prod\psi_j(x_j),$ where $\psi_j$ are
solutions of 2nd order differential equations. Accordingly,
$\cN(\phi)$ forms a ``grid'' of hypersurfaces determined by zeros of
$\psi_j$-s, and approximation by  $\cN(\phi)$ reduces to a series of
one-dimensional problems.

As a model example we consider an $n$-dimensional cube
$$
M(n)=\prod_{j=1}^n\; [0,\pi/\alpha_j],
$$
with Dirichlet boundary conditions, where for simplicity we assume
$\{\alpha_j^2\}_{j=1}^n$ are linearly independent over $\ratls$.
Then the eigenvalues have the form $\sum_{j=1}^n\alpha_j^2m_j^2$
(where $m_j\in\natls$) and are simple, while the corresponding
eigenfunctions have the form
$$
\phi(m_1,\ldots,m_n;x_1,\ldots,x_n)= \prod_{j:\,m_j\neq 0}
\sin(m_j\alpha_j x_j).
$$
\begin{proposition}\label{nbox}
$E(M(n))=2$ for all $n$.
\end{proposition}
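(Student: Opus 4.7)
The plan is to prove $E(M(n))=2$ by establishing $E(M(n))\geq 2$ via Dirichlet's approximation theorem and $E(M(n))\leq 2$ via Khinchin's theorem (Proposition~\ref{khinchin}) combined with a pigeonhole argument. The starting observation is that the nodal set of a product eigenfunction $\phi_{(m_1,\ldots,m_n)}=\prod_{j}\sin(m_j\alpha_j x_j)$ decomposes into coordinate hyperplanes,
$$\cN(\phi)=\bigcup_{j=1}^{n}\bigcup_{k=0}^{m_j}\{x:x_j=k\pi/(m_j\alpha_j)\},$$
so $\dist(x,\cN(\phi))=\min_j(\pi/(m_j\alpha_j))\|m_j\gamma_j\|$, where $\gamma_j:=\alpha_j x_j/\pi$ and $\|\cdot\|$ denotes distance to the nearest integer. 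The eigenvalue $\mu=\sqrt{\sum_j\alpha_j^2 m_j^2}$ satisfies $\alpha_j m_j\leq\mu$ for every $j$, so approximating $x$ by $\cN(\phi)$ at scale $C/\mu^b$ forces the one-dimensional inequality $\|m_j\gamma_j\|\leq C' m_j\alpha_j/\mu^b$ for some $j$.

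For the lower bound, I would fix $m_2=\ldots=m_n=1$ and let $m_1$ range over $\NN$, so $\mu\asymp\alpha_1 m_1$. Dirichlet's theorem supplies infinitely many $m_1$ with $\|m_1\gamma_1\|<1/m_1$, which yields $\dist(x,\cN(\phi))<\pi/(\alpha_1 m_1^2)\leq C/\mu^2$. Hence every $x\in M(n)$ lies in $M(n)(2,C)$ for a suitable $C$, so $\vol(M(n)(2,C))>0$ and $E(M(n))\geq 2$.

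For the upper bound, fix $\eps,C>0$ and apply Khinchin's Proposition~\ref{khinchin} with $\psi(q)=q^{-1-\eps/2}$ to each of $\gamma_1,\ldots,\gamma_n$: for almost every $x$, each set $B_j(x)=\{m\in\NN:\|m\gamma_j\|<m^{-1-\eps/2}\}$ is finite. Suppose such a generic $x$ belongs to $M(n)(2+\eps,C)$, with an approximating sequence $\mu_k\to\infty$. The assumed $\QQ$-linear independence of $\{\alpha_j^2\}$ makes the eigenvalues simple, so each $\mu_k$ corresponds to a unique multi-index $(m_1^{(k)},\ldots,m_n^{(k)})$. The distance formula gives, for each $k$, some index $j(k)$ with
$$\|m_{j(k)}^{(k)}\gamma_{j(k)}\|\leq C\,m_{j(k)}^{(k)}\alpha_{j(k)}/(\pi\mu_k^{2+\eps}).$$
Using $\mu_k\geq\alpha_{j(k)}m_{j(k)}^{(k)}$, the right side is bounded by $C'/(m_{j(k)}^{(k)})^{1+\eps}$, so $m_{j(k)}^{(k)}\in B_{j(k)}(x)$.

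The pairs $(j(k),m_{j(k)}^{(k)})$ therefore lie in the \emph{finite} set $\{1,\ldots,n\}\times\bigcup_j B_j(x)$, so by pigeonhole some fixed pair $(j,m)$ is realized for infinitely many $k$. For a.e.\ $x$, $m\gamma_j\notin\integers$ (the exceptional set is a null hyperplane), hence $\|m\gamma_j\|$ is a fixed positive constant; the displayed inequality then forces $\mu_k^{2+\eps}\leq C m\alpha_j/(\pi\|m\gamma_j\|)$, contradicting $\mu_k\to\infty$. This gives $\vol(M(n)(2+\eps,C))=0$ for all $\eps,C>0$, and hence $E(M(n))\leq 2$. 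The subtle step is the pigeonhole argument, which relies on both the product structure of $\cN(\phi)$ (only one coordinate must carry the approximation) and the simplicity of eigenvalues (a unique multi-index is attached to $\mu_k$); the rest is an assembly of one-dimensional Diophantine facts.
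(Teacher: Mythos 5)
Your proof is correct and follows essentially the same strategy as the paper: reduce to one-dimensional Diophantine approximation by decomposing the nodal set into coordinate hyperplanes, then use Dirichlet-type approximation (the paper uses continued-fraction convergents, an equivalent device) for the lower bound and Khinchin/Borel--Cantelli plus pigeonhole for the upper bound. You are somewhat more explicit than the paper about the pigeonhole step that fixes one coordinate index $j$ carrying the approximation infinitely often, and about why the associated denominators must tend to infinity for generic $x$, both of which the paper leaves implicit when it invokes Corollary~\ref{ratls}.
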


\noindent{\bf Proof of Proposition \ref{nbox}.}

We first make a change of variables $y_j=\pi\alpha_jx_j$.  This
change of variables will only affect constants in the rate of
approximation by nodal sets; it won't affect the exponent.
In the rescaled coordinates, nodal sets have the form
\begin{equation}\label{nodal:box}
\cN(\phi(m_1,\ldots,m_n))=\cup_{j:m_j\neq 0}\cA_j,
\end{equation}
where
$\cA_j:=\{(y_1,\ldots,y_n):y_j=k_j/m_j,\ \ 0\leq k_j\leq m_j\}$.
We first show that
\begin{claim}
$E(M(n))\geq 2$.
\end{claim}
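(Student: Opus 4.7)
The plan is to reduce to a one-dimensional Diophantine problem, handled by Dirichlet's approximation theorem. First, I would single out the one-parameter family of eigenfunctions
\[
\phi_m := \phi(m, 1, 1, \ldots, 1;\,\cdot\,), \qquad m \in \natls,
\]
whose eigenvalues satisfy $\mu_m^2 = \alpha_1^2 m^2 + \sum_{j \geq 2} \alpha_j^2$, so $\mu_m \asymp m$ as $m \to \infty$. By the description \eqref{nodal:box} of nodal sets in the box, $\cN(\phi_m)$ already contains the full family of parallel hyperplanes $\cA_1 = \{y_1 = k/m : 0 \leq k \leq m\}$.

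Next, I would fix an arbitrary point $x \in M(n)$ and write its first coordinate in the rescaled variable as $y_1 \in [0, 1]$ (where $y_j = \alpha_j x_j / \pi$). By Dirichlet's approximation theorem, there exist infinitely many pairs $(k, m) \in \integers \times \natls$ with $0 \leq k \leq m$ and
\[
|y_1 - k/m| < 1/m^2.
\]
For each such $m$, the Euclidean distance in $M(n)$ from $x$ to the nodal hyperplane $\{y_1 = k/m\}$ is bounded by $(\pi/\alpha_1)\cdot 1/m^2 \leq C/\mu_m^2$, hence $\dist(x, \cN(\phi_m)) \leq C/\mu_m^2$ for infinitely many $m$. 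This places $x$ in $M(2, C)$.

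Since the argument applies to every $x \in M(n)$, one obtains $M(2, C) = M(n)$, and therefore $E(M(n)) \geq 2$. I do not anticipate any substantive obstacle: the proof is a direct reduction to a classical 1-D approximation result, the only bookkeeping being the linear change of variables $y_j = \alpha_j x_j/\pi$ and the observation that Dirichlet's theorem yields infinitely many denominators $m$ even when $y_1$ is rational (simply take multiples of its denominator).
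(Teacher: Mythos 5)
Your proof is correct and follows the same strategy as the paper: reduce to a one-dimensional Diophantine approximation problem in the first coordinate, producing infinitely many eigenfunctions whose nodal sets contain a hyperplane $\{y_1 = k/m\}$ within distance $O(1/\mu^2)$ of the given point. The paper uses continued-fraction convergents and the multi-index $(q_k,0,\ldots,0)$, while you use Dirichlet's theorem and $(m,1,\ldots,1)$; the two are equivalent here, though your choice of multi-index has the small virtue of keeping every entry nonzero (as Dirichlet boundary conditions on the box require), and by handling rational coordinates explicitly you obtain $M(2,C)=M(n)$ for all points rather than just a full-measure subset.
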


\begin{proof}
Let $(y_1,\ldots,y_n)\in M$ be an arbitrary point on $M$; we have
$0\leq y_j\leq 1$.  We can assume without loss of generality that
$y_j\notin\ratls,\forall 1\leq j\leq n$, since the set of such
points has the full measure. Consider next the continued fraction
expansion of its first (say) coordinate,
$$
y_1=[0;a_1,a_2,\ldots],
$$
where we use the notation of \cite{K}.  Let $p_k/q_k,\;
k=1,2,\ldots$ be the corresponding continued fractions.  Then the
points $(p_k/q_k,y_2,\ldots,y_n)\in\cN(\phi(q_k,0,\ldots,0))$, and
the Claim follows from the well-known inequality \cite{K}
$$
|y_1-p_k/q_k|<1/q_k^2.
$$
\end{proof}

We next show that
\begin{claim}
$E(M(n))\leq 2$.
\end{claim}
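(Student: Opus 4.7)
The plan is to reduce the problem to the one-dimensional case already treated by Corollary~\ref{ratls}. After the rescaling $y_j = \pi\alpha_j x_j$ used in the previous claim, the nodal set of $\phi(m_1,\ldots,m_n)$ is the union~\eqref{nodal:box} of axis-aligned hyperplanes $\{y_j = k_j/m_j\}$ for indices $j$ with $m_j\neq 0$. Since the rescaling affects only constants in the approximation rate, it is enough to show that in the rescaled coordinates, for all $C,\eps>0$ the set of points $y=(y_1,\ldots,y_n)$ such that infinitely many nodal sets meet $B(y,C/\mu^{2+\eps})$ has Lebesgue measure zero. Saying that $B(y,C/\mu^{2+\eps})$ meets a nodal set means that for \emph{some} index $j$ with $m_j\neq 0$ there is an integer $k_j$ with $|y_j - k_j/m_j|<C/\mu^{2+\eps}$.

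The key observation is the elementary inequality $\mu = \sqrt{\sum \alpha_i^2 m_i^2}\geq \alpha_j m_j$ for each $j$ with $m_j\neq 0$, which transfers the approximation rate from the eigenvalue $\mu$ back to a denominator $m_j$. Indeed, if $y\in M(2+\eps,C)$ then there is an infinite sequence of multi-indices $(m^{(k)}_1,\ldots,m^{(k)}_n)$, with $\mu_k\to\infty$, and for each $k$ an index $j(k)\in\{1,\ldots,n\}$ and an integer $\ell(k)$ with $m^{(k)}_{j(k)}\neq 0$ and
$$
\left|y_{j(k)} - \frac{\ell(k)}{m^{(k)}_{j(k)}}\right| < \frac{C}{\mu_k^{2+\eps}} \leq \frac{C'}{\bigl(m^{(k)}_{j(k)}\bigr)^{2+\eps}}.
$$
By pigeonhole, some fixed index $j^*\in\{1,\ldots,n\}$ occurs as $j(k)$ for infinitely many $k$.

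Next I would discard the Lebesgue-null set of $y$ whose coordinate $y_{j^*}$ is rational; for the remaining points the rationals $\ell(k)/m^{(k)}_{j^*}$, whose distance to $y_{j^*}$ tends to zero, must have unbounded reduced denominators $q_k$, and $q_k \mid m^{(k)}_{j^*}$ so $q_k \leq m^{(k)}_{j^*}$. Writing these rationals in reduced form $p_k/q_k$ with $q_k\to\infty$, we obtain infinitely many solutions $(p,q)$ of $|y_{j^*}-p/q|<C''/q^{2+\eps}$. By Corollary~\ref{ratls} the set of such $y_{j^*}\in[0,1]$ has one-dimensional Lebesgue measure zero.

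Finally, since for each fixed $j^*$ the set of $y$ whose $j^*$-th coordinate lies in a one-dimensional null set has $n$-dimensional Lebesgue measure zero by Fubini, taking a finite union over $j^*\in\{1,\ldots,n\}$ yields $\Vol(M(2+\eps,C))=0$, which combined with the previous claim proves $E(M(n))=2$. The only delicate point is the pigeonhole plus the argument that $m^{(k)}_{j^*}\to\infty$ along the chosen subsequence; everything else is a bookkeeping of constants depending on the $\alpha_j$'s.
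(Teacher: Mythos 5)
Your proof is correct and follows essentially the same route as the paper: project onto a single coordinate via \eqref{nodal:box}, transfer the approximation rate from $\mu_k$ to the corresponding denominator $m_j$ using $\mu\geq\alpha_j m_j$, and invoke Corollary~\ref{ratls}. The paper's version is terser and leaves implicit the pigeonhole on the coordinate index, the verification that the (reduced) denominators tend to infinity when $y_{j^*}\notin\ratls$, and the concluding Fubini step, but those are precisely the details you supply, so the two arguments coincide.
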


\begin{proof}
It suffices to show that $\Vol(M(2+\eps,C))=0$ for all $C,\eps>0$.
Let $\by=(y_1,\ldots,y_n)\in M(2+\eps,C)$.  As before, we may assume
that $y_j\notin\ratls$.  We know that there exists a sequence of
eigenvalues $\mu_k\to\infty$ such that
$d(\by,\cN(\phi_{\mu_k}))<C/\mu_k^{2+\eps}$.  Since all distances on
$[0,1]^n$ are equivalent, we may define $d(\bx,\by)=\max_{1\leq
j\leq n}|x_j-y_j|$.

In view of \eqref{nodal:box}, it follows that for some $1\leq j\leq
n$ (say, for $j=1$), there exists a sequence of integers
$q_k,k=1,2,\ldots$, such that $q_k\to\infty$ and
$|y_1-p_k/q_k|<C/q_k^{2+\eps}$ for some $0\leq p_k\leq q_k$.  The
Claim now follows from Corollary \ref{ratls}.  This also finishes
the proof of Proposition \ref{nbox}.
\end{proof}

For manifolds with ergodic geodesic flows (e.g.\ in negative
curvature), eigenfunction behavior has been studied using {\em
random wave model} \cite{Be}.  In addition, {\em percolation model}
\cite{BS} has been used to study the statistics of nodal domains in
chaotic systems.
We refer the reader to \cite{FGS} and
references therein for a nice discussion about applicability of
those models for studying various questions about eigenfunctions of
chaotic systems.

In the opinion of the authors, it would be difficult to use these
models directly to predict the ``best possible'' rate of
approximation by nodal sets.  The reason is that these models
describe a {\em single} eigenfunction on a scale of $C/\mu$ (several
wavelengths).  However (as shown by the example of $M=[0,\pi]$) for
a given $x\in M$ the values of $\mu$ giving the best approximation
of $x$ by $\cN(\phi_\mu)$ can grow exponentially. It thus seems
difficult to take into account simultaneous behavior of all
eigenfunctions in such a large energy range. However, one can
probably expect that $E(M)>2$ for such manifolds (in contrast to the
integrable case), due to irregularity of nodal lines for such
systems.

It also seems interesting to study ``level sets'' $M(b)$ for the approximation
exponent $b$, e.g. defined by
$$
M(b):=\cup_C M(b,C)\setminus \left(\cup_{a<b}\cup_C M(a,C)\right).
$$

\begin{remark} It should follow from the results of \cite{JL} that
the conclusion of Theorem \ref{thm:approx} should also hold for {\em
level sets} of eigenfunctions (since the level set of an
eigenfunction is a nodal set of a linear combination of that
eigenfunction with a constant eigenfunction). It seems interesting
to determine which level sets are $C/\mu$-dense (like nodal sets).
\end{remark}
%-------------------------------------

%%%%%%%%%%%%%%%%%%%%%%%%%%%%%%%%%%%

 %%% Bibliography

\providecommand{\bysame}{\leavevmode\hbox to3em{\hrulefill}\thinspace}
%\providecommand{\MR}{\relax\ifhmode\unskip\space\fi MR }
% \MRhref is called by the amsart/book/proc definition of \MR.
%\providecommand{\MRhref}[2]{%
%  \href{http://www.ams.org/mathscinet-getitem?mr=#1}{#2}
%}
%\providecommand{\href}[2]{#2}

\end{document}